\newcommand{\R}{\mathbb{R}}
\newcommand{\C}{\mathbb{C}}
\newcommand{\supp}{\text{supp}}
\newcommand{\Z}{\mathbb{Z}}
\newtheorem{corollary}{Corollary}[section]
\newtheorem{definition}{Definition}
\newtheorem{lemma}{Lemma}[section]
\newtheorem{proposition}{Proposition}[section]
\newtheorem{theorem}{Theorem}[section]
\numberwithin{equation}{section}
\newcommand{\qtq}[1]{\quad\text{#1}\quad}
\begin{document}

\title[Scattering for NLS with inhomogeneous nonlinearities]{Scattering for the $2d$ NLS with inhomogeneous nonlinearities.}

\author[L. Baker]{Luke Baker}

\begin{abstract}  We prove large-data scattering in $H^1$ for inhomogeneous nonlinear Schr\"odinger equations in two space dimensions for all powers $p>0$. We assume the inhomogeneity is nonnegative and repulsive; we additionally require decay at infinity in the case $0<p\leq 2$.  We use the method of concentration-compactness and contradiction. We preclude the existence of compact solutions using a Morawetz estimate in the style of Nakanishi \cite{N}, adapted to repulsive inhomogeneous nonlinearities in \cite{BM}.
\end{abstract}

\maketitle

\section{Introduction}\label{S:Intro}

In this work we consider the large data scattering theory for inhomogeneous nonlinear Schr\"odinger equations of the form
\begin{equation}\label{nls}
\begin{cases}
i\partial_tu+\Delta u=a(x)|u|^pu,\\
u|_{t=0}=  u_0\in H^1(\R^2)  
\end{cases}
\end{equation}

Here $u:\R_t\times\R_x^2\to \C$ and $a:\R^2\to \R$. Such equations enjoy conservation of \textit{mass} and \textit{energy} defined by
\begin{equation}\label{E:MassEnergy}
M(u)=\int|u(x)|^2\,dx\qtq{and}E(u)=\int\frac{1}{2}|\nabla u|^2+\frac{a(x)}{p+2}|u|^{p+2}\,dx,
\end{equation}
respectively. We will not consider arbitrary weights $a$, but restrict ourselves to the following class of functions:

\begin{definition}\label{D:ADef}
Let $p>0$. We say that $a:\R^2\to \R$ is \textit{admissible} if it is nonnegative, repulsive in the sense that $x\cdot \nabla a(x)\leq 0$, and satisfies
\[
\begin{cases}
a, \nabla a\in L^\infty(\R^2) & p>2\\
a,\nabla a\in L^1(\R^2)\cap L^\infty(\R^2) & p\leq 2.
\end{cases}
\]
Under these conditions, we may obtain a map $\tilde{a}:\mathbb{S}^1\to \R$ by setting
\begin{equation}\label{E:ATilde}
\tilde{a}(\theta)=\lim_{r\to\infty}a(r\cos(\theta),r\sin(\theta)).
\end{equation}
For $a$ to be admissible, we additionally require that $\tilde{a}$ is continuous.
\end{definition}

With this definition in place our main result is the following:

\begin{theorem}\label{T}
Let $p>0$ and let $a:\R^2\to \R$ be admissible. Then for any $u_0\in H^1(\R^2)$, there exists a unique global solution $u:\R\times\R^2\to \C$ to \eqref{nls} with $u|_{t=0}=u_0$. Moreover, $u$ scatters in $H^1$ as $t\to\pm\infty$; that is, there exists $u_\pm\in H^1(\R^2)$ such that
\[
\lim_{t\to\pm\infty}\|u(t)-e^{it\Delta}u_\pm\|_{H^1(\R^2)}=0,
\]
where $e^{it\Delta}$ is the free Schr\"odinger group.
\end{theorem}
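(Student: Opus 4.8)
The overall strategy is the concentration-compactness-and-contradiction scheme: conservation laws supply global solutions and reduce scattering to an a priori spacetime bound, failure of that bound produces a minimal nonscattering solution with precompact orbit, and the inhomogeneous Morawetz estimate then shows no such solution exists. Concretely, I would first set up local well-posedness in $H^1(\R^2)$ via a contraction in Strichartz spaces. In two dimensions $H^1\hookrightarrow L^q$ for every $q<\infty$, so when $p>2$ the hypotheses $a,\nabla a\in L^\infty$ suffice, whereas when $p\le2$ the extra assumption $a,\nabla a\in L^1$ (hence in every $L^q$, $1\le q\le\infty$) is what allows one to treat the low, non-smooth power. Because $a\ge0$, the energy in \eqref{E:MassEnergy} obeys $E(u)\ge\tfrac12\|\nabla u\|_{L^2}^2$, so conservation of mass and energy bounds $\|u(t)\|_{H^1}$ uniformly and the local solutions extend to global ones. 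A standard argument (continuity of the flow, small-data scattering, the usual tails-vanishing lemma) then reduces Theorem~\ref{T} to the claim that every $H^1$ solution of \eqref{nls} satisfies $\|u\|_{S(\R)}<\infty$ in a suitable Strichartz-type norm.

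Suppose that bound fails. By an induction on a conserved mass--energy functional (since \eqref{nls} carries no scaling symmetry, there is nothing to rescale away) I would extract a minimal nonscattering solution $u_c$ whose orbit is precompact in $H^1(\R^2)$ modulo the residual symmetries of \eqref{nls}, using a linear profile decomposition in $H^1(\R^2)$. The feature special to the inhomogeneous problem is the behaviour of profiles whose spatial centers $x_n$ diverge: after translating by $x_n$, the equation governing the corresponding nonlinear profile converges to a constant-coefficient equation, namely the free Schr\"odinger equation when $p\le2$ (indeed $a\in L^1(\R^2)$ forces $\tilde a\equiv0$ in \eqref{E:ATilde}) and the defocusing NLS $i\partial_tv+\Delta v=\tilde a(\theta_\infty)|v|^pv$ with $\tilde a(\theta_\infty)\ge0$ when $p>2$ (using continuity of $\tilde a$); in either case the limiting equation scatters in $H^1(\R^2)$ by known results (e.g.\ \cite{N}), so such profiles carry only bounded Strichartz norm and cannot produce the counterexample. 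Hence $u_c$ comes from a single profile with bounded parameters; modulo a spatial translation $x(t)$ its orbit is precompact in $H^1(\R^2)$, and $x(t)$ must stay bounded, for otherwise a limit along $x(t_n)\to\infty$ would be a nonzero solution of a constant-coefficient defocusing NLS with precompact orbit, of which there are none. Thus $\{u_c(t):t\in\R\}$ is precompact in $H^1(\R^2)$.

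To rule out such a $u_c$ I would invoke the Morawetz estimate of Nakanishi type adapted in \cite{BM} to repulsive inhomogeneous nonlinearities: the nonnegativity $a\ge0$ and the repulsivity $x\cdot\nabla a\le0$ ensure that every term the inhomogeneity contributes to the Morawetz identity has a favorable sign, which yields a global-in-time spacetime bound on $u_c$ controlled by its conserved mass and energy. Precompactness of the orbit of the nonzero solution $u_c$, together with the fact that a compact solution cannot live where $a$ vanishes (there the evolution is essentially free and disperses), forces a uniform-in-time lower bound on the Morawetz integrand over a fixed ball, so the time integral diverges --- a contradiction. Therefore $\|u\|_{S(\R)}<\infty$ for every solution, and Theorem~\ref{T} follows.

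The crux is the second step: running the profile decomposition and the accompanying stability/perturbation theory for \eqref{nls} when the inhomogeneity has destroyed translation (and scaling) invariance, in particular identifying the constant-coefficient defocusing limits for escaping profiles and importing their scattering theory. This stability theory is itself delicate in the range $0<p\le2$, where $|u|^pu$ is merely H\"older continuous; there the decay hypothesis $a\in L^1(\R^2)$ is precisely what makes the nonlinearity effectively short-range and lets the perturbative estimates close.
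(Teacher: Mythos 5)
Your overall architecture matches the paper's: well-posedness plus conservation laws, reduction of scattering to an a priori bound in a critical spacetime norm, extraction of a minimal nonscattering solution with precompact orbit via a profile decomposition in which escaping profiles are matched to constant-coefficient limiting models (the free equation for $p\le2$, the defocusing NLS with coupling $\tilde a(\theta_\infty)$ for $p>2$), and preclusion via the Nakanishi-type Morawetz estimate adapted in \cite{BM}. However, your justification of the final preclusion step does not hold up as stated. You claim that precompactness together with the assertion that ``a compact solution cannot live where $a$ vanishes'' forces a uniform-in-time lower bound on the Morawetz integrand over a fixed ball. The quantity controlled by Proposition~\ref{P:Mora} is $\int_1^\infty\|u(t)\|_{Z(t)}^2\,\frac{dt}{t}$, where $Z(t)$ is a weighted $L^2$ norm of $(x+2it\nabla)u$; it is not built from $a|u|^{p+2}$ and has nothing to do with the support of $a$. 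The lower bound actually needed, $\inf_{t\ge1}\|u(t)\|_{Z(t)}\gtrsim1$ (Proposition~\ref{P:Preclusion}), is proved from compactness alone: for bounded time limits one uses continuity of the flow and the identity $(x+2it\nabla)g=e^{i|x|^2/4t}\,2it\,\nabla\bigl(e^{-i|x|^2/4t}g\bigr)$ to see that vanishing of the $Z(t)$-norm forces $u\equiv0$, and for $t_n\to\infty$ one uses $\|\varphi\|_{Z(t_n)}\to\|2\nabla\varphi\|_{L^2}$. Your proposed mechanism is both unproven and beside the point, and as written it would not close the contradiction.

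There are two further places where your sketch presupposes rather than supplies the paper's work. For $p>2$ you assert that translating by $x_n$ makes the equation converge to the NLS with coupling $\tilde a(\theta_\infty)$, but $a$ only converges to $\tilde a(\theta)$ radially along each fixed direction, while $|x_n|\to\infty$ and $\theta_n\to\theta_\infty$ at unrelated rates; making the approximation rigorous is the paper's chief novelty (Lemma~\ref{L:Cutoffs} and Proposition~\ref{P:SFAP2}), which localizes the limiting nonlinear solution to shrinking triangular sectors with angles $\omega_n$ chosen so that $\|a-\tilde a(\theta_\infty)\|_{L^\infty}\to0$ on the support of the cutoff while the cutoff's derivatives are small enough in the dual norm to be absorbed as stability errors. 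Finally, you attribute the feasibility of the stability theory for $0<p\le2$ to $a\in L^1$; the decay of $a$ only fixes the scaling (making the problem effectively intercritical), whereas the genuine obstruction for $p<1$ is the mere H\"older continuity of $z\mapsto|z|^pz$, which the paper circumvents by running the perturbation argument in derivative-free critical spaces via the exotic Strichartz estimate \eqref{E:ExoStric}, following \cite{HR,M-half}, not by decay of $a$.
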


The case of $a\equiv1$ corresponds to the standard defocusing power-type NLS, a widely studied model, for which an $H^1$ scattering theory with $H^1$ initial data is available for exponents $p$ satisfying $\frac{4}{d}\leq p\leq \frac{4}{d-2}$. The cases $p=\frac{4}{d}$ and $p=\frac{4}{d-2}$ are referred to as \textit{mass-critical} and \textit{energy-critical} respectively, and the range $\frac{4}{d}<p<\frac{4}{d-2}$ is referred to as the \textit{intercrtical regime}. We refer the reader to \cite{GV, N} for the intercritical results, to \cite{CKSTT, RV, V2} for the energy-critical results, and to \cite{D, D2, D3} for the mass-critical results. Additionally, for $p\leq \frac{2}{d}$ it is known that scattering fails for any nonzero $u_0\in\Sigma=\{f\in H^1:xf\in L^2\}$, see \cite{S}. We briefly remark that one can obtain $H^1$ scattering in the so called \textit{short range case} $\frac{2}{d}<p<\frac{4}{d}$ if one works with $u_0\in \Sigma$, see \cite{TY}. Finally, for an expository paper that discusses these results in more detail we refer the reader to \cite{M-expo}.

The key to understanding these results is to observe that in the case of the standard power-type NLS one has a scaling symmetry. Namely, if $u$ solves \eqref{nls} with $a\equiv1$, then for $\lambda>0$ the function $u_\lambda$ defined by
\[
u_\lambda(t,x)=\lambda^\frac{2}{p}u(\lambda^2t,\lambda x),
\]
also solves \eqref{nls} and in fact one has
\[
\|u_\lambda\|_{L_t^\infty \dot{H}^{s_c}_x(\R\times\R^d)}=\|u\|_{L_t^\infty \dot{H}^{s_c}_x(\R\times\R^d)},\quad s_c=\frac{d}{2}-\frac{2}{p}.
\]
Thus the value of $p$ provides a notion of criticality for the problem.

In the case of a more general weight $a\not\equiv1$ this scaling symmetry may be lost and so the correct notion of criticality is less clear. For some models (e.g. the \textit{inhomogeneous NLS} where $a(x)=|x|^{-b}$) there is still a scaling symmetry and so this problem is easily resolved, however in the generality we consider this is not the case (cf. \cite{MMZ, CFGM, M-INLS, FarahG, CC, CLZ, CLZ2, GM, AGT, LMZ}).

In \cite{BM}, the authors considered the general inhomogeneous case when $d=1$, and were able to prove scattering in the restricted case $p>2$. There the following observation was made: In the case that $a\equiv1$, one wants to estimate the nonlinear term $|u|^pu$ in some dual Strichartz norm, i.e. in a space of the form $L_t^{\tilde \alpha'}L_x^{\tilde \beta '}$ for some admissible pair $(\tilde\alpha,\tilde\beta)$. By H\"older's inequality one has
\[
\||u|^pu\|_{L_t^{\tilde\alpha'}L_x^{\tilde\beta'}}\leq \|u\|_{L_t^4L_x^\infty}\|u\|^p_{L_t^{q}L_x^r},
\]
where the space $L_t^qL_x^r$ scales critically, i.e.
\[
\frac{2}{q}+\frac{d}{r}=\frac{2}{p}=\frac{d}{2}-s_c.
\]
Analogously, the authors of \cite{BM} defined a notion of criticality in the case $a\not\equiv1$. Indeed again estimating the nonlinearity $a|u|^pu$ in the dual Strichartz space $L_t^\frac{4}{3}L_x^1$ and placing $a$ in some space $L^\rho$ one obtains the nonlinear estimate
\[
\|a|u|^pu\|_{L_t^\frac{4}{3}L_x^1}\leq \|a\|_{L_x^\rho}\|u\|_{L_t^4L_x^\infty}\|u\|_{L_t^{q}L_x^r}^p,
\]
where $q=2p$ and $r=\frac{\rho p}{\rho -1}$. Thus in the $1d$ case one is able to treat the problem as $\dot{H}^s$-critical where
\[
s=\frac{1}{2}-\frac{2}{q}-\frac{1}{r}.
\]
In particular if $\rho=1$ then $r=\infty$ and one has
\[
s=\frac{1}{2}-\frac{1}{p},
\]
and so under this notion of criticality, the problem can be treated as intercritical as long as $p>2$ (by choosing $a\in L^\rho$ appropriately).

In dimension $d=2$ if one performs a similar computation one has
\[
\|a|u|^pu\|_{L_t^2L_x^ 1}\leq \|a\|_{L_x^\rho}\|u\|_{L_t^2L_x^\infty}\|u\|_{L_t^\infty L_x^r}^p
\]
where $r=\frac{\rho p}{\rho -1}$. We remark that while the above estimate is in general forbidden (due to the failure of endpoing Strichartz estimates in $2d$) it is nonetheless useful for working out the scaling. In particular, borrowing the notion of criticality from the $1d$ case one can treat the problem as $\dot{H}^s$-critical where 
\[
s=\frac{2}{2}-\frac{2}{r}=1-\frac{1}{p}+\frac{1}{\rho p}
\]
We see that by choosing $\rho$ close enough to $1$, for any $p>0$ we can always enforce $s$ strictly between $0$ and $1$.  Thus by imposing decay on $a$ one can treat the problem as intercritical for any $p>0$. 

The exact values of $\rho$ we choose are
\begin{equation}\label{E:RhoValue}
\rho=\rho(p)=\begin{cases}
\infty & p>2\\
\frac{4}{4-p} & p\leq 2,
\end{cases}
\end{equation}
so that the corresponding critical scaling $s$ is
\begin{equation}\label{E:ScalValue}
s(p)=\begin{cases}
1-\frac{2}{p}& p>2\\
\frac{1}{2} & p\leq 2.
\end{cases}    
\end{equation}

We briefly remark that in dimensions $d>2$ similar considerations also show that for any $0<p<\frac{4}{d-2}$ the problem may be treated as intercritical. Indeed, establishing scattering in this case (under slightly stronger asumptions on $a$) is simpler as one has access to Lin-Strauss type Morawetz estimates (cf. \cite{W}).

While this scaling heuristic suggests that scattering should be possible for all $p>0$, treating the full range of $p>0$ introduces new challenges compared to the $1d$ case. In particular, the case of $p<1$ makes developing a suitable stability theory more difficult. To get around this issue we follow the approach of \cite{HR, M-half} utilizing exotic Strichartz estimates and working with spaces that scale critically, but which do not involve any fractional derivatives.

Another significant challenge introduced by moving to $2d$ arises when carrying out the concentration-compactness and contradiction method. More specifically, we will need to construct scattering solutions to \eqref{nls} corresponding to the profiles arising from the linear profile decomposition, Proposition~\ref{P:LPD},  living far from the origin. Due to the broken translation symmetry this is not so simple. Indeed, in the case $p>2$ an admissible $a$ can have different limits as $|x|\to \infty$ along different directions, so that there is no clear limiting model. In the $1d$ setting this issue is not so severe as $a$ can have at most two different behaviors and the existing methods for approximation can be readily adapted. However, in $2d$ some new constructions are needed before the existing methods can be properly incorporated (cf. \cite{BM, KMVZZ, KMVZ, MMZ, CFGM}). The details may be found in Section~\ref{S:SFAP2}.

In the rest of the introduction we briefly discuss the proof of Theorem~\ref{T}.

First, under the assumptions of Theorem~\ref{T} we may establish a suitable local well-posedness theory using the standard arguments. This may then be upgraded to global well-posedness using conservation of mass and energy. Additionally, for solutions $u$ such that $M(u)+E(u)$ is sufficiently small, its not difficult to establish scattering.

Thus if Theorem~\ref{T} were to fail, there would be some mass-energy threshold, $E_c$, such that all solutions $u$ to \eqref{nls} satisfying $M(u)+E(u)<E_c$ scatter, while at and above this threshold scattering may fail. We further show that in this scenario, we may construct a minimal non-scattering solution $u$ to \eqref{nls} such that $M(u)+E(u)=E_c$. Because of its minimality, this non-scattering solution will additionally have the property that its orbit is pre-compact in $H^1$. The general approach to constructing such solutions is standard at this point (using linear and nonlinear profile decompositions); however, as mentioned above, the broken translation symmetry leads to new challenges in constructing the nonlinear profiles corresponding to profiles living far from the origin.

Theorem~\ref{T} is therefore reduced to precluding the existence of compact solutions as described above. We carry this out in Section~\ref{S:Preclusion} using a Morawetz estimate introduced by Nakanishi in \cite{N}. Under the repulsivity assumption on $a$, the authors of \cite{BM} showed that the same estimate could be adapted to the case of \eqref{nls}. Ultimately, we show that such Morawetz estimates are incompatible with a solution having a pre-compact orbit in $H^1$, thus establishing the final result. We remark that for the standard power-type NLS in dimensions $d=1,2$ one can obtain a simpler proof of scattering due to the existence of \textit{interaction Morawetz estimates} (cf. \cite{CGT, CHVZ}). However, such estimates rely crucially on the spatial translation symmetry of the equation, and thus are not available in our setting, see \cite{BM} for further discussion.

For the standard power-type NLS the concentration compactness methods are not needed to prove $H^1$ scattering in the intercritical regime, due to \textit{interaction Morawetz estimates} \cite{CGT, CHVZ}. Such estimates rely crucially on the translation symmetry for the power-type NLS and are thus not available in the case of inhomogeneous nonlinearities. See \cite{BM} for further discussions.

The rest of the paper is organized as follows: In Section~\ref{S:prelim} we introduce the notation and basic results used throughout the rest of the paper. In Section~\ref{S:Stability} we review the well-posedness theory for \eqref{nls} and establish a stability result. In Section~\ref{S:SFAP1} we prove a scattering for far-away profiles result in the simpler case that $p\leq 2$, and thus $a$ must decay at infinity. In Section~\ref{S:SFAP2} we extend this to the case of $p>2$ where we encounter the challenges discussed above. In Section~\ref{S:Reduction} we construct a minimal compact solution $u$, under the assumption that Theorem~\ref{T} fails. Finally, in Section~\ref{S:Preclusion} we show that such compact solutions are incompatible with the Morawetz estimates of Nakanishi, completing the proof of Theorem~\ref{T}.

\subsection*{Acknowledgements} The author is very grateful to his advisor Jason Murphy for a careful reading of this manuscript as well as helpful comments and revisions concerning an earlier version of this manuscript. This work was supported in part by NSF grant DMS-2350225 (P.I. Jason Murphy). 

\section{Preliminaries}\label{S:prelim}
\subsection{Notation}
We write $A\lesssim B$ to denote that $A\leq CB$ for some $C>0$. We use the space-time Lebesgue norm
\[
\|u\|_{L_t^\alpha L_x^\beta(I\times \R^d)}=\|\|u(t,\cdot)\|_{L_x^\beta(\R^d)}\|_{L_t^\alpha(I)}.
\]
We assume familiarity with standard tools of harmonic analysis, such as Littlewood--Paley projections and associated estimates such as Bernstein estimates and the square function estimate. We use the standard notation $P_Nf$ or $f_N$ for the projection to frequency $N\in 2^{\mathbb{Z}}$. 

We denote the free Schr\"odinger propagator by $e^{it\Delta}$.  This is the Fourier multiplier operator with symbol $e^{-it|\xi|^2}$.  We recall the \emph{Duhamel formula} for solutions to $(i\partial_t + \Delta) u = F$:
\begin{equation}\label{Duham}
u(t)=e^{it\Delta}u_0-i\int_0^te^{i(t-s)\Delta}F(s)\,ds,\qtq{where} u_0=u|_{t=0}.
\end{equation}

\subsection{Basic Estimates}
We recall the Strichartz estimates for $e^{it\Delta}$.  We state the estimates in general space dimension $d$. 

A pair $(\alpha,\beta)$ is \textit{admissible} if $2\leq \alpha,\beta\leq \infty$, $\frac{2}{\alpha}+\frac{d}{\beta}=\frac{d}{2}$, and $(\alpha,\beta,d)\not=(2,\infty,2)$.

\begin{theorem}[Strichartz estimates \cite{GV, S2, KT}]
For any admissible pairs $(\alpha,\beta)$, $(\tilde \alpha,\tilde \beta)$ and $u:I\times\R^d\to\C$ a solution to $(i\partial_t+\Delta)u=F$ for some $F$, we have
\[
\|u\|_{L_t^\alpha L_x^\beta(I\times\R^d)}\lesssim \|u(t_0)\|_{L_x^2(\R^d)}+\|F\|_{L_t^{\tilde{\alpha}'}L_x^{\tilde{\beta}'}(I\times \R^d)}
\]
for any $t_0\in I$.  Here $'$ denotes the H\"older dual. 
\end{theorem}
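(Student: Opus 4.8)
The plan is to reduce everything to two linear estimates via the Duhamel formula \eqref{Duham}, and then to run the classical $TT^*$ argument of Ginibre--Velo and Keel--Tao. Applying \eqref{Duham} with initial time $t_0$ together with Minkowski's inequality, the claim follows once we know the homogeneous estimate $\|e^{it\Delta}f\|_{L_t^\alpha L_x^\beta(\R\times\R^d)}\lesssim\|f\|_{L_x^2}$ and the retarded inhomogeneous estimate $\bigl\|\int_0^t e^{i(t-s)\Delta}F(s)\,ds\bigr\|_{L_t^\alpha L_x^\beta}\lesssim\|F\|_{L_t^{\tilde\alpha'}L_x^{\tilde\beta'}}$ for all admissible pairs $(\alpha,\beta),(\tilde\alpha,\tilde\beta)$. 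The only properties of the free flow we use are the $L^2$ unitarity $\|e^{it\Delta}f\|_{L_x^2}=\|f\|_{L_x^2}$ and the dispersive bound $\|e^{it\Delta}f\|_{L_x^\infty}\lesssim|t|^{-d/2}\|f\|_{L_x^1}$; interpolating these yields $\|e^{it\Delta}f\|_{L_x^\beta}\lesssim|t|^{-d(\frac12-\frac1\beta)}\|f\|_{L_x^{\beta'}}$ for $2\le\beta\le\infty$.

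For the homogeneous estimate in the non-endpoint range $\alpha>2$ (which, in the dimension $d=2$ relevant to this paper, is forced by admissibility), I would argue by $TT^*$: since $\|T\|=\|TT^*\|^{1/2}$, it suffices to bound the diagonal operator $F\mapsto\int_\R e^{i(t-s)\Delta}F(s)\,ds$ from $L_t^{\alpha'}L_x^{\beta'}$ to $L_t^\alpha L_x^\beta$. Minkowski and the interpolated dispersive bound give a pointwise-in-$t$ control by the fractional integral $\int_\R|t-s|^{-d(\frac12-\frac1\beta)}\|F(s)\|_{L_x^{\beta'}}\,ds$, and the Hardy--Littlewood--Sobolev inequality in the time variable closes it; this is exactly where the scaling identity $\frac2\alpha+\frac d\beta=\frac d2$ and the condition $\alpha>2$ (so that the kernel exponent lies in $(0,1)$ and the Lebesgue exponents are admissible for HLS) enter. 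The endpoint $\alpha=2$, possible only for $d\ge3$, is not reachable this way; there I would cite the bilinear real-interpolation argument of Keel--Tao \cite{KT}, which dyadically decomposes $|t-s|\sim2^j$ and sums the pieces.

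For the retarded estimate I would first establish its untruncated analogue $\bigl\|\int_\R e^{i(t-s)\Delta}F(s)\,ds\bigr\|_{L_t^\alpha L_x^\beta}\lesssim\|F\|_{L_t^{\tilde\alpha'}L_x^{\tilde\beta'}}$ by factoring the integral through the group law as $e^{it\Delta}g$ with $g=\int_\R e^{-is\Delta}F(s)\,ds$, bounding $\|g\|_{L_x^2}\lesssim\|F\|_{L_t^{\tilde\alpha'}L_x^{\tilde\beta'}}$ by the dual of the homogeneous estimate for $(\tilde\alpha,\tilde\beta)$ and then applying the homogeneous estimate for $(\alpha,\beta)$. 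Passing from this to the physical (retarded) integral $\int_0^t$ is done with the Christ--Kiselev lemma, which applies because $\tilde\alpha'<\alpha$ --- an inequality valid for every pair of admissible exponents except the double endpoint $\alpha=\tilde\alpha=2$ in $d\ge3$, which is again covered by \cite{KT}. Thus the only genuinely delicate point is the endpoint analysis of Keel--Tao (both the bilinear interpolation and the breakdown of Christ--Kiselev when $\tilde\alpha'=\alpha$); since in $d=2$ the endpoint $(\alpha,\beta,d)=(2,\infty,2)$ is excluded by hypothesis, for the purposes of this paper the elementary non-endpoint argument above suffices, and I would present only that, citing \cite{GV, S2, KT} for completeness in general dimension.
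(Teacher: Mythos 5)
The paper does not prove this theorem at all --- it is quoted as a standard result with the citations \cite{GV, S2, KT} --- and your sketch reproduces exactly the argument of those references: the $TT^*$ reduction using unitarity, the dispersive estimate and Hardy--Littlewood--Sobolev in time off the endpoint (where admissibility gives the kernel exponent $2/\alpha\in(0,1)$), factoring the untruncated inhomogeneous estimate through $L^2$ and invoking the Christ--Kiselev lemma for the retarded integral (valid since $\tilde\alpha'<\alpha$ except at the double endpoint), with Keel--Tao covering the endpoint cases in $d\ge 3$. Your outline is correct, and your remark that the exclusion of $(\alpha,\beta,d)=(2,\infty,2)$ means the elementary non-endpoint argument already suffices for the two-dimensional applications in this paper is accurate.
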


We will also need the following standard pointwise estimates related to the nonlinearity.

\begin{lemma}
Let $f(z)=|z|^pz$ and let $c_1,c_2,...,c_J\in \C$. Then we have the following two pointwise estimates:
\begin{equation}\label{E:PW_1}
|f(c_1+c_2)-f(c_1)|\lesssim_p|c_2|^{p+1}+|c_2||c_1|^p,
\end{equation}
\begin{equation}\label{E:PW_2}
\left|f\left(\sum_{j=1}^Jc_j\right)-\sum_{j=1}^Jf(c_j)\right|\lesssim_{J,p}\sum_{j\not=k}|c_jc_k^p|.
\end{equation}
\end{lemma}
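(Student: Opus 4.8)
The plan is to reduce both estimates to a single global bound on the differential of $f$, then use the fundamental theorem of calculus for \eqref{E:PW_1} and induction on $J$ for \eqref{E:PW_2}.

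The first step is to record that $f(z)=|z|^pz$, viewed as a map $\C\to\C$, lies in $C^1$ and satisfies $|Df(z)|\lesssim_p|z|^p$ for \emph{every} $z$, the origin included. Away from $0$ this is a direct Wirtinger-derivative computation: from $f=(z\bar z)^{p/2}z$ one gets $|\partial_z f|=(\tfrac p2+1)|z|^p$ and $|\partial_{\bar z}f|=\tfrac p2|z|^p$, so $|Df(z)|\le(p+1)|z|^p$; at $0$ one has $f(z)/|z|=|z|^p\to0$, so $Df(0)=0$ and $Df$ is continuous there too. This is the only place where the range $0<p<1$ — where $f$ is merely H\"older continuous and has no second derivative — requires care, and the bound disposes of it once and for all.

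For \eqref{E:PW_1} I would integrate $Df$ along the segment joining $c_1$ to $c_1+c_2$; since $f\in C^1$ this gives
\[
|f(c_1+c_2)-f(c_1)|\le|c_2|\int_0^1|Df(c_1+tc_2)|\,dt\lesssim_p|c_2|\int_0^1|c_1+tc_2|^p\,dt,
\]
and then $|c_1+tc_2|\le|c_1|+|c_2|$ together with $(|c_1|+|c_2|)^p\lesssim_p|c_1|^p+|c_2|^p$ yields the claim. The segment may pass through the origin, but this is harmless precisely because $|Df|\lesssim|z|^p$ stays bounded on it.

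For \eqref{E:PW_2} I would induct on $J$. The case $J=1$ is trivial; for $J=2$, assuming without loss of generality $|c_1|\ge|c_2|$, one bounds $|f(c_1+c_2)-f(c_1)-f(c_2)|\le|f(c_1+c_2)-f(c_1)|+|f(c_2)|$ and applies \eqref{E:PW_1}, absorbing $|c_2|^{p+1}\le|c_1|^p|c_2|$. For the inductive step, with $S'=\sum_{j<J}c_j$, I would split
\[
f\Big(\sum_{j\le J}c_j\Big)-\sum_{j\le J}f(c_j)=\big[f(S'+c_J)-f(S')-f(c_J)\big]+\Big[f(S')-\sum_{j<J}f(c_j)\Big];
\]
the second bracket is handled by the inductive hypothesis, and the first by the $J=2$ case applied to $S'$ and $c_J$, using $|S'|\le\sum_{j<J}|c_j|$ and $|S'|^p\lesssim_{J,p}\sum_{j<J}|c_j|^p$ to land inside $\sum_{j\ne k}|c_jc_k^p|$. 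Tracking constants shows the final bound depends only on $J$ and $p$. The main — indeed only — obstacle is the low-regularity regime $0<p<1$, which, as noted, is absorbed into the uniform bound on $Df$.
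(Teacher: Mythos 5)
Your proof is correct: the uniform bound $|Df(z)|\lesssim_p|z|^p$ (valid through the origin since $p>0$), the fundamental theorem of calculus along the segment for \eqref{E:PW_1}, and the induction on $J$ with absorption of $|c_2|^{p+1}$ into $|c_2||c_1|^p$ for \eqref{E:PW_2} are exactly the standard argument that the paper invokes without proof for this lemma. No gaps; the low-regularity regime $0<p<1$ is indeed handled once and for all by the $C^1$ bound on $Df$.
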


\subsection{Concentration Compactness}
We will need the following linear profile decomposition adapted to the Strichartz estimates which will be used in the Palais-Smale condition in Section~\ref{S:Reduction}. 
\begin{proposition}\label{P:LPD}
Let $2<r<\infty$, let $\frac{2r}{r-2}<q<\infty$, and let $\{f_n\}$ be a bounded sequence in $H^1$. Then there exists $J^*\in\{0,1,...,\infty\}$, nonzero profiles $\{\varphi^j\}_{j=1}^{J^*}\subset H^1$, and space-time sequences $\{(t_n^j,x_n^j)\}\subset \R\times\R^2$ such that the following decomposition holds for each finite $J\leq J^*$:
\[
f_n(x)=\sum_{j=1}^Je^{it_n^j\Delta}\varphi^j(x-x_n^j)+w_n^J,
\]
with $\{w_n^J\}$ bounded in $H^1$. Moreover, the following properties hold:
\begin{itemize}
\item The remainders vanish in the sense that
\[
\lim_{J\to J^*}\limsup_{n\to\infty}\|e^{it\Delta}w_n^J\|_{L_t^qL_x^r(\R)}=0.
\]
\item For each $j$, we have either $t_n^j\equiv0$ or $|t_n^j|\to\infty$ as $n\to\infty$. 

\item Similarly, for each $j$ either $x_n^j\equiv0$ or $|x_n^j|\to\infty$. In the latter case, writing  $x_n^j=(r_n^j\cos(\theta_n^j),r_n^j\sin(\theta_n^j))$, then the angles $\theta_n^j$ satisfy $\theta_n^j\to \theta_\infty^j$ as $n\to\infty$ for some $\theta_\infty^j\in \mathbb{S}^1$.

\item The profiles are asymptotically orthogonal in the sense that
\[
\lim_{n\to\infty}\{|t_n^j-t_n^k|+|x_n^j-x_n^k|\}=\infty.
\]

\item For each $J$ we have the following mass/energy decoupling:
\[
M(f_n)=\sum_{j=1}^JM(\varphi^j)+M(w_n^J)+o_n(1)\qtq{as}n\to\infty,
\]
\[
E(f_n)=\sum_{j=1}^JE(e^{it_n^j\Delta}\varphi^j)+E(w_n^J)+o_n(1)\qtq{as}n\to\infty.
\]
\end{itemize}
\end{proposition}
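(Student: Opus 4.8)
The plan is to establish the linear profile decomposition of Proposition~\ref{P:LPD} following the now-standard inductive extraction scheme (as in Keraani, B\'egout--Vargas, and the references \cite{BM, KMVZZ, KMVZ} cited in the introduction), with the one new wrinkle being the tracking of the angular variable $\theta_n^j$ for spatially escaping profiles. First I would record the key decoupling ingredient: a refined Strichartz inequality asserting that for a sequence $f_n$ bounded in $H^1$ with $\|e^{it\Delta}f_n\|_{L_t^qL_x^r}\gtrsim\eps$, one can find frequencies $N_n\in 2^\Z$ and space-time points $(t_n,x_n)$ so that the frequency-localized translated profile $N_n^{-1}e^{-it_n\Delta}[P_{N_n}f_n](\cdot+x_n)$ has a weak $H^1$-limit $\varphi$ with $\|\varphi\|_{H^1}\gtrsim_{\eps,\text{bounds}}1$; because the bound in the hypothesis has $q>\frac{2r}{r-2}$, the exponents are strictly intercritical, so a Bernstein plus interpolation argument reduces the $L_t^qL_x^r$ norm to a product of an energy-type norm and a norm in which the free evolution of a bounded $H^1$ sequence decays, forcing frequency concentration; I would also note that since $H^1(\R^2)\hookrightarrow$ is subcritical in this regime one can in fact take $N_n\equiv 1$ (no genuine frequency scales), which simplifies the bookkeeping relative to the energy-critical case.

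Next I would run the abstract induction on profiles. Set $w_n^0=f_n$. Given $w_n^{J-1}$, if $\limsup_n\|e^{it\Delta}w_n^{J-1}\|_{L_t^qL_x^r}$ is zero (or drops below any threshold we will later send to $0$) we stop; otherwise apply the extraction lemma to produce $\varphi^J$, $(t_n^J,x_n^J)$, and set $w_n^J=w_n^{J-1}-e^{it_n^J\Delta}\varphi^J(\cdot-x_n^J)$. Passing to subsequences, I would normalize each time sequence to the dichotomy $t_n^j\equiv 0$ or $|t_n^j|\to\infty$ and each space sequence to $x_n^j\equiv 0$ or $|x_n^j|\to\infty$; in the escaping case, since $\mathbb{S}^1$ is compact, a further subsequence gives $\theta_n^j\to\theta_\infty^j$, which yields the new third bulleted property. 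Asymptotic orthogonality of distinct parameters $\{|t_n^j-t_n^k|+|x_n^j-x_n^k|\}\to\infty$ follows from the standard argument that a failure would let the weak limit defining $\varphi^k$ absorb a piece of $\varphi^j$, contradicting the construction of $w_n^{j}$ as the residual after removing $\varphi^j$. The quantitative lower bound $\|\varphi^J\|_{H^1}\gtrsim\eps_J$ together with the almost-orthogonality of the $H^1$ norms (Pythagorean-type decoupling of $\|w_n^{J-1}\|_{H^1}^2=\|\varphi^J\|_{H^1}^2+\|w_n^J\|_{H^1}^2+o_n(1)$, itself a consequence of weak convergence and the parameter divergence) forces $\sum_j\|\varphi^j\|_{H^1}^2<\infty$, hence $\eps_J\to 0$ and the remainder condition $\lim_{J\to J^*}\limsup_n\|e^{it\Delta}w_n^J\|_{L_t^qL_x^r}=0$.

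It remains to prove the mass and energy decoupling. The mass statement $M(f_n)=\sum_{j\le J}M(\varphi^j)+M(w_n^J)+o_n(1)$ is the $L^2$ Pythagorean expansion and follows directly from weak $L^2$-convergence and the divergence of parameters (the cross terms $\langle e^{it_n^j\Delta}\varphi^j(\cdot-x_n^j),e^{it_n^k\Delta}\varphi^k(\cdot-x_n^k)\rangle\to 0$ for $j\ne k$, and $\langle e^{it_n^j\Delta}\varphi^j(\cdot-x_n^j),w_n^J\rangle\to 0$ by the defining weak limit). The kinetic part of the energy decouples the same way using $\nabla$ commuting with $e^{it\Delta}$ and translations. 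The potential part $\int\frac{a(x)}{p+2}|f_n|^{p+2}$ is where the admissibility of $a$ and the angular hypothesis enter: I would use the pointwise inequality \eqref{E:PW_2} to expand $|\sum_j e^{it_n^j\Delta}\varphi^j(\cdot-x_n^j)+w_n^J|^{p+2}$ into a sum of single-profile terms plus cross terms controlled by $\sum_{j\ne k}|c_jc_k^{p+1}|$-type products, which vanish after integration against the bounded weight $a$ by the parameter orthogonality and a standard $L^{p+2}$-decoupling (\`a la B\'egout--Vargas / Brezis--Lieb); for each surviving single-profile term one changes variables $x\mapsto x+x_n^j$, so that $a(x+x_n^j)$ appears, and in the escaping case $|x_n^j|\to\infty$ with $\theta_n^j\to\theta_\infty^j$ one invokes the continuity of $\tilde a$ and definition \eqref{E:ATilde} to see $a(x+x_n^j)\to\tilde a(\theta_\infty^j)$ locally uniformly, hence $\int a(x+x_n^j)|e^{it_n^j\Delta}\varphi^j|^{p+2}\to\tilde a(\theta_\infty^j)\int|e^{it_n^j\Delta}\varphi^j|^{p+2}$, which is precisely $\int a(x)|e^{it_n^j\Delta}\varphi^j(\cdot-x_n^j)|^{p+2}$ up to $o_n(1)$ (using that, for the fixed profile, the free evolution has a fixed $L^{p+2}$ profile when $t_n^j\equiv0$, and decays to zero in $L^{p+2}$ when $|t_n^j|\to\infty$ by dispersive estimates, in which case that term simply does not contribute). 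In the case $p\le 2$, where $a\in L^1\cap L^\infty$, the same computation is if anything easier since then $a(x+x_n^j)\to 0$ in the escaping direction and these potential terms drop out entirely. The main obstacle, and the only genuinely new point compared to the one-dimensional treatment in \cite{BM}, is the careful handling of the escaping potential terms: one must verify uniformity of the convergence $a(\cdot+x_n^j)\to\tilde a(\theta_\infty^j)$ on the (compactly-supported-up-to-tails) mass of the profile, which is exactly what the continuity requirement on $\tilde a$ in Definition~\ref{D:ADef} is designed to supply; everything else is a routine adaptation of the standard concentration-compactness machinery.
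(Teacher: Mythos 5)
Your proposal is correct and follows essentially the same route as the paper: an inverse Strichartz lemma built on frequency localization (with the frequency scale pinned to a bounded range, so no genuine scales), an inductive extraction of profiles with subsequential dichotomies for $t_n^j$, $x_n^j$ and the angle via compactness of $\mathbb{S}^1$, Pythagorean $H^1$ decoupling forcing the remainder to vanish, and mass/kinetic decoupling by weak convergence with the potential term handled by a Br\'ezis--Lieb (refined Fatou) argument. Your extra discussion of the weighted potential term via $a(\cdot+x_n^j)\to\tilde a(\theta_\infty^j)$ is a harmless elaboration of what the paper leaves to the cited references.
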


The proof is standard and there are many great resources treating the proof in depth (cf. \cite{Keraani,FXC,KV,V}). For this reason we will only sketch the proof below. There are two key differences in the version above from other variants, namely the choice of scattering norm and the condition concerning the angles $\theta_n$, neither of which significantly impacts the argument.

We now sketch the proof. The idea is to identify ``bubbles of concentration" and show that after removing these bubbles the error vanishes in the desired norm. To identify such bubbles one must identify a physical scale and a point in space-time where concentration occurs. The key to finding such a scale is following lemma:

\begin{lemma}[Refined Sobolev Embedding]\label{L:RSE}
Let $2<r<\infty$. Then there exists some $\sigma=\sigma(r)$ such that for any $f\in H^1$ one has
\[
\|f\|_{L^r(\R^2)}\lesssim\sup_{N\in 2^\Z}\|P_Nf\|_{L^r(\R^2)}^\sigma\|f\|_{H^1(\R^2)}^{1-\sigma}.
\]
\end{lemma}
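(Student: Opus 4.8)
The plan is to combine a Littlewood--Paley decomposition with Bernstein's inequality to trade an $L^r$-norm against an $H^1$-norm, then optimize over the frequency split. First I would write $f = \sum_N P_N f$ and apply the triangle inequality together with the square function estimate (or, more simply, the embedding $\ell^2 \hookrightarrow \ell^r$ at the level of frequency pieces since $r > 2$) to reduce matters to controlling $\sum_N \|P_N f\|_{L^r}$, or a suitable $\ell^q$-variant thereof. The key point is then to bound each piece $\|P_N f\|_{L^r}$ by interpolating: on the one hand $\|P_N f\|_{L^r} \le \sup_M \|P_M f\|_{L^r}$ trivially; on the other hand, by Bernstein, $\|P_N f\|_{L^r} \lesssim N^{1 - 2/r}\|P_N f\|_{L^2}$ for high frequencies ($N \ge 1$) and $\|P_N f\|_{L^r} \lesssim N^{1-2/r}$-type gains (using $\|P_N f\|_{L^r}\lesssim N^{2/2 - 2/r}\|P_N f\|_{L^2}$, which for $N \le 1$ gives a favorable power) for low frequencies. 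Thus high and low frequency tails are summable against $\|f\|_{H^1}$ with a geometric factor in the cutoff scale.

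The main step is the dyadic optimization. Fix a threshold $N_0$ to be chosen. For $N \le N_0$ (low frequencies) and $N \ge N_0$ (high frequencies), I would estimate the corresponding portions of $\sum_N \|P_N f\|_{L^r}$ using the Bernstein bounds above; the geometric series in $N$ sum to something like $N_0^{\,2/2-2/r}\,\|f\|_{L^2}$ from below... wait, more carefully: from the low frequencies one gets a bound $\lesssim N_0^{\alpha}\|f\|_{L^2}$ with $\alpha = 1 - 2/r > 0$, and from the high frequencies $\lesssim N_0^{-\beta}\|\nabla f\|_{L^2}$ with $\beta = 2/r > 0$ (using one derivative to beat the Bernstein factor $N^{1-2/r}$ when $1 - 2/r < 1$, i.e.\ always in $d=2$). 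Meanwhile, a complementary estimate using the hypothesis replaces one factor of $\|P_N f\|_{L^r}$ in each piece by $\sup_M \|P_M f\|_{L^r}$ raised to a small power, at the cost of lowering the exponents. Balancing the resulting powers of $N_0$ by choosing $N_0$ as an appropriate power of $\|f\|_{H^1}/\sup_M\|P_M f\|_{L^r}$ yields the claimed inequality with an explicit $\sigma = \sigma(r) \in (0,1)$.

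Concretely, the clean way to package this: set $A = \sup_{M}\|P_M f\|_{L^r}$ and write, for each $N$,
\[
\|P_N f\|_{L^r} \le \|P_N f\|_{L^r}^{1-\sigma}\,\|P_N f\|_{L^r}^{\sigma} \le A^{1-\sigma}\,\bigl(\min\{N,N^{-1}\}\bigr)^{c}\,\|f\|_{H^1}^{\sigma}\cdot(\text{abuse})
\]
— better, interpolate $\|P_N f\|_{L^r} \le \|P_N f\|_{L^r}^{1-\sigma} (N^{\pm(1-2/r)}\|P_N f\|_{L^2\text{ or }\dot H^1})^{\sigma}$ via Bernstein, getting a factor $A^{1-\sigma}$ times $N^{\pm\theta}\|f\|_{H^1}^\sigma$ with $\theta>0$ once $\sigma$ is small enough that the surviving power of $N$ is strictly positive (resp.\ negative) on the low (resp.\ high) frequency side. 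Summing the two geometric series in $N$ then gives $\|f\|_{L^r} \lesssim A^{1-\sigma}\|f\|_{H^1}^{\sigma}$, which is exactly the assertion with the roles of the exponents as stated (relabel $\sigma \leftrightarrow 1-\sigma$ to match the display). The one thing to be careful about — and the only real obstacle — is choosing $\sigma$ small enough that \emph{both} the low-frequency and the high-frequency geometric sums converge, i.e.\ that the interpolation leaves a strictly positive power of $N$ to sum below $1$ and a strictly negative power to sum above $1$ simultaneously; since the relevant unperturbed exponents are $1-2/r>0$ and $-2/r<0$ (using a full derivative on the high side), there is an open interval of admissible $\sigma(r)$, and any choice in it works. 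Everything else is routine Littlewood--Paley bookkeeping.
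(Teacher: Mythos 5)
Your argument is correct, but it is a genuinely different route from the paper's. You bound $\|f\|_{L^r}\le\sum_N\|P_Nf\|_{L^r}$ by the triangle inequality, interpolate each dyadic piece between $A:=\sup_M\|P_Mf\|_{L^r}$ and a Bernstein bound ($\|P_Nf\|_{L^r}\lesssim N^{1-2/r}\|P_Nf\|_{L^2}$ for $N\le1$, $\|P_Nf\|_{L^r}\lesssim N^{-2/r}\|P_Nf\|_{\dot H^1}$ for $N\ge1$), and sum the two geometric series, getting $\|f\|_{L^r}\lesssim A^{1-\sigma}\|f\|_{H^1}^{\sigma}$; after relabeling the exponents this is exactly the stated lemma, and in fact every $\sigma\in(0,1)$ works with your split at $N=1$ (the signs $\sigma(1-2/r)>0$ and $-2\sigma/r<0$ are automatic), so your caution about taking $\sigma$ small is unnecessary rather than a gap. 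The paper instead expands $\|f\|_{L^r}^r$ bilinearly via the square function estimate, $\|f\|_{L^r}^r\lesssim\sum_{N_1\le N_2}\int|f_{N_1}|^{r/2}|f_{N_2}|^{r/2}$, uses H\"older and Bernstein to extract $\sup_N\|f_N\|_{L^r}^{r-2}$ together with a Schur-type factor $(N_1/N_2)^{(r-2)/r}$, and handles $4<r<\infty$ by a preliminary Gagliardo--Nirenberg reduction to $L^4$; this yields a scale-homogeneous refinement involving only the fractional derivative $\||\nabla|^{(r-2)/r}f\|_{L^2}$ and a specific exponent, which is sharper than your conclusion. Your version is more elementary (no square function, no case split at $r=4$, uniform in $2<r<\infty$) at the cost of a cruder exponent and of using the full inhomogeneous $H^1$ norm; for the way the lemma is used in the inverse Strichartz inequality (only some $\sigma=\sigma(r)\in(0,1)$ and $H^1$ control are needed), either proof suffices. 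The only routine point to make explicit in a final write-up is that the homogeneous Littlewood--Paley decomposition of $f\in H^1\subset L^r$ converges in $L^r$ (or argue via Fatou), so the initial triangle inequality is justified.
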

\begin{proof}
For $2<r\leq 4$ one may estimate using the square-function estimate, concavity of fractional powers, H\"older's inequality, and Bernstein estimates
\begin{align*}
\|f\|_{L^r}^r&\lesssim\sum_{N_1\leq N_2}\int |f_{N_1}|^\frac{r}{2}|f_{N_2}|^\frac{r}{2}\,dx\\
&\lesssim\sum_{N_1\leq N_2}\|f_{N_1}\|_{L^\frac{2r}{4-r}}\|f_{N_1}\|_{L^r}^\frac{r-2}{2}\|f_{N_2}\|_{L^r}^{\frac{r-2}{2}}\|f_{N_1}\|_{L^2}\\
&\lesssim\sup_{N}\|f_N\|_{L^r}^{r-2}\sum_{N_1\leq N_2}(N_1N_2)^\frac{r-2}{r}\||\nabla|^{\frac{r-2}{r}}f_{N_1}\|_{L^\frac{2r}{4-r}}\||\nabla|^\frac{r-2}{r}f_{N_2}\|_{L^2}\\
&\lesssim \sup_N\|f_N\|_{L^r}^{r-2}\sum_{N_1\leq N_2}\left(\frac{N_1}{N_2}\right)^\frac{r-2}{r}\||\nabla|^\frac{r-2}{r}f_{N_1}\|_{L^2}\||\nabla|^\frac{r-2}{r}f_{N_1}\|_{L^2}\\
&\lesssim \sup_N\|f_{N}\|_{L^r}^{r-2}\||\nabla|^\frac{r-2}{r}f\|_{L^2}.
\end{align*}
For $4<r<\infty$ one should first use the Gagliardo-Nirenberg inequality to get
\[
\|f\|_{L^r}\lesssim \|f\|_{L^4}^{\frac{4}{r}}\|f\|_{\dot{H}^1}^{\frac{r-4}{r}},
\]
and then estimate the $\|f\|_{L^4}$ norm as above.
\end{proof}

Having built a tool to identify a scale of concentration, we now show how to identify a bubble of concentration:

\begin{lemma}[Inverse Strichartz Inequality]\label{L:ISE}
Let $2<r<\infty$, let $\frac{2r}{r-2}<q<\infty$, and let $\{f_n\}$ be a sequence in $H^1$ satisfying
\[
\lim_{n\to\infty}\|f_n\|_{H^1}=A\qtq{and}\lim_{n\to\infty}\|e^{it\Delta}f_n\|_{L_t^qL_x^r}=\varepsilon>0.
\]
Passing to a subsequence, there exist $\phi\in H^1$ with
\begin{equation}\label{E:ISE1}
\|\phi\|_{H^1}\gtrsim A(\tfrac{\varepsilon}{A})^\alpha
\end{equation}
for some $\alpha=\alpha(q,r)\in (0,1)$, as well as $(t_n,x_n)\in\R\times\R$ such that
\begin{align}
&e^{it_n\Delta}f_n(x+x_n)\rightharpoonup \phi(x)\qtq{weakly in } H^1, \label{E:ISE2} \\
&\liminf_{n\to\infty}\left\{\|f_n\|_{H^\lambda}^2-\|f_n-\phi_n\|_{H^\lambda}^2-\|\phi\|_{H^\lambda}^2\right\}=0,\label{E:ISE3}
\end{align}
for $\lambda\in\{0,1\}$. Additionally, for any $p>0$ one has 
\begin{equation}\label{E:ISE4}
\liminf_{n\to\infty}\left\{\|a\,f_n\|_{L_x^{p+2}}^{p+2}-\|a\,[f_n-\phi_n]\|_{L_x^{p+2}}^{p+2}-\|a\,\phi_n\|_{L_x^{p+2}}^{p+2}\right\}=0,
\end{equation}
where $\phi_n = e^{it_n\Delta}\phi(x-x_n)$. Finally, we may also take either $t_n\equiv0$ or $|t_n|\to\infty$. Likewise, we may take either $|x_n|\equiv0$ or $|x_n|\to\infty$, and in this latter case writing $x_n=(r_n\cos(\theta_n),r_n\sin(\theta_n))$ we have that $\theta_n\to \theta_\infty$ for some $\theta_\infty\in \mathbb{S}^1$.
\end{lemma}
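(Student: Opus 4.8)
The plan is to run the standard inverse-Strichartz argument in the scaling-subcritical setting (cf.\ \cite{Keraani,KV,V,FXC}), with one extra bookkeeping item: the conclusion \eqref{E:ISE4} for the inhomogeneous potential term. Throughout write $\tilde q=\tfrac{2r}{r-2}$ for the Schr\"odinger-admissible companion of $r$; since $q>\tilde q$ the pair $(q,r)$ is $\dot H^s$-critical with $s=1-\tfrac2q-\tfrac2r\in(0,1)$. \emph{Step 1 (frequency localization).} Interpolating the $L_t^{\tilde q}L_x^r$ Strichartz estimate against the Bernstein bound $\|e^{it\Delta}P_Ng\|_{L_x^r}\lesssim N^{1-2/r}\|P_Ng\|_{L_x^2}$ produces, for some $\beta=\beta(q,r)\in(0,1)$,
\[
\|e^{it\Delta}P_Nf_n\|_{L_t^qL_x^r}\lesssim N^{\beta}\|P_Nf_n\|_{L_x^2}\lesssim N^{\beta-1}\|P_Nf_n\|_{\dot H^1}.
\]
Using the square function estimate together with Minkowski's inequality (legitimate since $q,r\ge2$) and summing the first bound over $N\le1$ and the second over $N>1$ gives $\sum_N\|e^{it\Delta}P_Nf_n\|_{L_t^qL_x^r}\lesssim_A1$, whence for $n$ large
\[
\varepsilon^2\lesssim\|e^{it\Delta}f_n\|_{L_t^qL_x^r}^2\lesssim_A\sup_N\|e^{it\Delta}P_Nf_n\|_{L_t^qL_x^r}.
\]
Choosing $N_n$ that nearly attains this supremum and re-inserting it into the two displayed bounds pins $N_n$ into a window bounded above and below in terms of $A$ and $\varepsilon$; since the frequencies are dyadic, after passing to a subsequence we may take $N_n\equiv N_0$ fixed. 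This is where the absence of a scaling symmetry enters: the concentration scale is frozen, and so the profile will live at a fixed frequency.

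\emph{Step 2 (the profile and the quadratic decoupling).} H\"older-interpolating $\|e^{it\Delta}P_{N_0}f_n\|_{L_t^qL_x^r}$ between $L_{t,x}^\infty$ and the admissible pair $(\eta q,\eta r)$ with $\eta=\tfrac2q+\tfrac2r\in(0,1)$, and applying Strichartz to the latter factor, yields $\|e^{it\Delta}P_{N_0}f_n\|_{L_{t,x}^\infty}\gtrsim_A\varepsilon^{c}$ for an explicit $c$. Pick $(t_n,x_n)$ with $|e^{it_n\Delta}P_{N_0}f_n(x_n)|\gtrsim_A\varepsilon^{c}$, set $g_n=e^{it_n\Delta}f_n(\cdot+x_n)$ (bounded in $H^1$), and pass to a subsequence so that $g_n\rightharpoonup\phi$ in $H^1$, hence also in $L^2$. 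Testing $g_n$ against the (fixed, Schwartz) convolution kernel of $P_{N_0}$ gives $|P_{N_0}\phi(0)|=\lim_n|P_{N_0}g_n(0)|\gtrsim_A\varepsilon^c$, so by Bernstein $\|\phi\|_{L^2}\gtrsim_A\varepsilon^c/N_0$; combining with the upper bound on $N_0$ from Step 1 and a homogeneity (rescaling $f_n\mapsto\mu f_n$) argument upgrades this to \eqref{E:ISE1} for a suitable $\alpha\in(0,1)$. The $H^\lambda$-decoupling \eqref{E:ISE3} for $\lambda\in\{0,1\}$ is then immediate: translations and $e^{it\Delta}$ are unitary on $H^\lambda$, so
\[
\|f_n\|_{H^\lambda}^2-\|f_n-\phi_n\|_{H^\lambda}^2-\|\phi\|_{H^\lambda}^2=\|g_n\|_{H^\lambda}^2-\|g_n-\phi\|_{H^\lambda}^2-\|\phi\|_{H^\lambda}^2=2\Re\langle g_n-\phi,\phi\rangle_{H^\lambda}\longrightarrow0.
\]

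\emph{Step 3 (normalization and the potential decoupling).} Passing to further subsequences, I would absorb any bounded part of $t_n$ (resp.\ $x_n$) into $\phi$ so that $t_n\equiv0$ or $|t_n|\to\infty$ (resp.\ $x_n\equiv0$ or $|x_n|\to\infty$), and in the latter case extract $\theta_n=x_n/|x_n|\to\theta_\infty\in\mathbb S^1$; none of the earlier conclusions is disturbed. For \eqref{E:ISE4}: if $|t_n|\to\infty$, the dispersive estimate (using $p+2>2$) plus density give $\|\phi_n\|_{L^{p+2}}=\|e^{it_n\Delta}\phi\|_{L^{p+2}}\to0$, so $\|a\phi_n\|_{L^{p+2}}\le\|a\|_{L^\infty}\|\phi_n\|_{L^{p+2}}\to0$; then the elementary bound $\bigl||z_1|^{p+2}-|z_2|^{p+2}\bigr|\lesssim(|z_1|^{p+1}+|z_2|^{p+1})|z_1-z_2|$ with H\"older and the uniform control $\|af_n\|_{L^{p+2}}+\|a(f_n-\phi_n)\|_{L^{p+2}}\lesssim_A1$ (from $H^1(\R^2)\hookrightarrow L^{p+2}$) closes this case. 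If $t_n\equiv0$, changing variables $x\mapsto x+x_n$ rewrites the left side of \eqref{E:ISE4} as $\int|a(y+x_n)|^{p+2}\bigl(|g_n|^{p+2}-|g_n-\phi|^{p+2}-|\phi|^{p+2}\bigr)\,dy$; since $g_n\to\phi$ a.e.\ along a subsequence (Rellich plus a diagonal argument) and $0\le|a(\cdot+x_n)|^{p+2}\le\|a\|_{L^\infty}^{p+2}$, the Brezis--Lieb lemma applied to the bracket, with the bounded weight pulled out, gives the claim — covering both $x_n\equiv0$ and $|x_n|\to\infty$.

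I expect the crux to be Step 2: producing the profile at a fixed scale with a quantitative lower bound of the precise form \eqref{E:ISE1} (in particular with $\alpha\in(0,1)$) requires a careful accounting of the exponents emerging from the refined Strichartz bound and the H\"older interpolation, and if a crude accounting overshoots I would sharpen the frequency-localization step using Lemma~\ref{L:RSE}. Among the ingredients not present for the homogeneous equation, the one needing genuine care is \eqref{E:ISE4} in the regime $|t_n|\to\infty$, where a change of variables is unavailable and one must instead extract smallness from dispersion; the sign and boundedness of $a$ render the translation regime (via Brezis--Lieb) routine.
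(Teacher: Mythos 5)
Your proposal is correct, and it is the standard inverse-Strichartz argument, but the way you detect the concentration differs from the paper's execution. The paper first interpolates in time only (between $L_t^\infty L_x^r$ and the admissible $L_t^{2r/(r-2)}L_x^r$) to pin down times $t_n$, then applies its Refined Sobolev Embedding (Lemma~\ref{L:RSE}) to $e^{it_n\Delta}f_n$ at the frozen time to find the frequency scale, and finally uses the spatial $L^2$--$L^\infty$ interpolation to locate $x_n$; you instead localize in frequency first, via a refined space-time Strichartz bound built from the square function estimate, Minkowski, and the Bernstein/Strichartz interpolation $\|e^{it\Delta}P_Nf\|_{L_t^qL_x^r}\lesssim N^{\beta}\|P_Nf\|_{L^2}$, and then extract $(t_n,x_n)$ in one stroke by interpolating $L_t^qL_x^r$ between $L^\infty_{t,x}$ and the admissible pair $(\eta q,\eta r)$. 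Your route bypasses Lemma~\ref{L:RSE} entirely (the paper uses it since it has already proved it), and both yield the same conclusions; your Step 3 also supplies the details for \eqref{E:ISE4} that the paper only cites (refined Br\'ezis--Lieb with the bounded weight $a(\cdot+x_n)$ in the $t_n\equiv0$ regime, dispersive decay of $e^{it_n\Delta}\phi$ in $L^{p+2}$ when $|t_n|\to\infty$), which is a correct and complete treatment. One caveat: your homogeneity remark normalizes $A$ but cannot force the exponent in \eqref{E:ISE1} into $(0,1)$; the natural output of either argument is an exponent larger than $1$ (the paper's own proof gives $1/\sigma>1$ with $\sigma=\sigma_1\sigma_2\sigma_3<1$), and since $\varepsilon\lesssim A$ the weaker bound is what is actually available. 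This is an imprecision in the stated range of $\alpha$ shared by the paper, and it is harmless downstream, since the Palais--Smale argument only needs a quantitative lower bound on $\|\phi\|_{H^1}$ in terms of $A$ and $\varepsilon$; so it does not constitute a gap in your proposal.
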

\begin{proof}
Passing to a subsequence, we may assume that for all $n$ we have
\[
\|f_n\|_{H^1}\sim A\qtq{and}\|e^{it\Delta}f_n\|_{L_t^qL_x^r}\sim \varepsilon.
\]
Then we have by H\"older's inequality, Strichartz estimates, the refined Sobolev embedding Lemma~\ref{L:RSE}
\begin{align*}
\|e^{it\Delta}f_n\|_{L_t^qL_x^r}&\lesssim \|e^{it\Delta}f_n\|_{L_t^\infty L_x^r}^{\frac{q(r-2)-2r}{q(r-2)}}\|e^{it\Delta}f_n\|_{L_t^\frac{2r}{r-2}L_x^r}^{\frac{2r}{q(r-2)}}\\
&\lesssim \|e^{it\Delta}f_n\|_{L_t^\infty L_x^r}^{\frac{q(r-2)-2r}{q(r-2)}}\|f_n\|_{H^1}^{\frac{2r}{q(r-2)}}.
\end{align*}
Thus for $\sigma_1=\frac{q(r-2)-2r}{q(r-2)}$ we may find a sequence of times $\{t_n\}\subset \R$ such that
\[
A\left(\frac{\varepsilon}{A}\right)^\frac{1}{\sigma_1}\lesssim \|e^{it_n\Delta}f_n\|_{L_t^r}.
\]
Next, by the Refined Sobolev Embedding (Lemma~\ref{L:RSE}) we may find some $\sigma_2$ and frequency scales $\{N_n\}\subset 2^\Z$ such that
\[
A\left(\frac{\varepsilon}{A}\right)^\frac{1}{\sigma_1\sigma_2}\lesssim \|e^{it_n\Delta}P_{N_n}f_n\|_{L^r}.
\]
By Bernstein estimates we have
\[
A\left(\frac{\varepsilon}{A}\right)^\frac{1}{\sigma_1\sigma_2}\lesssim N_{n}^\frac{r-2}{r}\|f_n\|_{L^2}\qtq{so that} \left(\frac{\varepsilon}{A}\right)^{\frac{1}{\sigma_1\sigma_2}}\lesssim N_n^\frac{r-2}{r}.
\]
Similarly, we have
\[
A\left(\frac{\varepsilon}{A}\right)^\frac{1}{\sigma_1\sigma_2}\lesssim N_n^{-\frac{2}{r}}\||\nabla|^1f_n\|_{L^2}\qtq{so that} \left(\frac{\varepsilon}{A}\right)^{\frac{1}{\sigma_1\sigma_2}}\lesssim N_n^{-\frac{2}{r}}.
\]
Together these inequalities provide an upper and lower bound for $N_n$ in terms of $A$ and $\varepsilon$, thus after passing to a further subsequence we may assume that $N_n\equiv M$ for some $M$.

Finally, using H\"older's inequality with $\sigma_3=\frac{2}{r}$ we have
\[
\|e^{it_n\Delta}P_{N_n}f_n\|_{L^r}\leq \|e^{it_n\Delta}P_{N_n}f_n\|_{L^2}^{1-\sigma_3}\|e^{it_n\Delta}P_{N_n}f_n\|_{L^\infty}^{\sigma_3},
\]
so that putting $\sigma=\sigma_1\sigma_2\sigma_3$ there are a sequence of translation parameters $\{x_n\}\subset \R^2$ such that
\[
A\left(\frac{\varepsilon}{A}\right)^\frac{1}{\sigma}\lesssim |[e^{it_n\Delta}P_{N_n}f_n](x_n)|.
\]
Now we define $g_n(x)=e^{it_n\Delta}f_n(x+x_n)$. We observe that for all $n$ $\|g_n\|_{H^1}=\|f_n\|_{H^1}\lesssim A$, thus passing to a further subsequence there is some $\phi\in H^1$ such that the $g_n\rightharpoonup \phi$ weakly in $H^1$, giving \eqref{E:ISE2} and \eqref{E:ISE3}. We can deduce \eqref{E:ISE1} by pairing $\phi$ with the convolution kernel for $P_M$ and we may deduce the potential energy decoupling, \eqref{E:ISE4}, via an argument involving the refined Fatou's Lemma of Br\'ezis and Lieb (cf. \cite{FXC, KV, V}).

Finally, for the behavior of the $t_n$ and $x_n$ by passing to further subsequences we may assume that either $t_n\to t_\infty\in \R$ or $|t_n|\to\infty$. We may then in the first case, by incorporating the translation into the definition of the profile, take $t_n\equiv0$. The same logic applies to the $x_n$ and we obtain the statement concerning the angles $\theta_n$ by using compactness of $\mathbb{S}^1$ and passing to a further subsequence.
\end{proof}

\begin{proof}[Sketch of the proof of Proposition~\ref{P:LPD}]
We set $w_n^0=f_n$. If $\|e^{it\Delta}w_n^0\|_{L_t^qL_x^r}\to 0$, then we may set $J^*=0$ and we are done. Otherwise, we may apply the preceding Lemma~\ref{L:ISE} to obtain paramters $(t_n^1,x_n^1)$ and a profile $\varphi^1$. Setting $w_n^1=w_n^0-e^{-it_n^1\Delta}\varphi^1(x-x_n)$, we again have that if $\|e^{it\Delta}w_n^1\|_{L_t^qL_x^r}\to 0$, then we set $J^*=1$ and again we are done. Otherwise, we repeat this process until the remainder vanishes at some finite $J^*$ or we set $J^*=\infty$. In the latter case one may still confirm that the remainder vanishes as $J\to \infty$.

The $H^1$ and potential energy decoupling from Lemma~\ref{L:ISE} will provide the mass and energy decoupling statements.

Similarly, the weak convergence properties will imply the asymptotic orthogonality of the parameters $(t_n^j,x_n^j)$. Once again for full details we refer the reader to \cite{Keraani, FXC, KV, V}.
\end{proof}

\subsection{Nonlinear Estimates}

We now introduce the function spaces which we will use and the related nonlinear estimates. For a time interval $I$ we define
\[
X(I)=L_t^qL_x^r(I\times\R^2)\qtq{and} Y(I)=L_t^{\alpha}L_x^\beta(I\times\R^2),
\]
where
\begin{equation}\label{E:ExpDefsqr}
q=q(p)=\begin{cases}
\frac{p^2+2p}{2}&p>2\\
2p+4 & p\leq2,
\end{cases}\quad r=r(p)=\begin{cases}
p+2 & p>2\\
\frac{4p+8}{p} & p\leq 2,
\end{cases}
\end{equation}

\begin{equation}\label{E:ExpDefab}
\alpha=\alpha(p)=\begin{cases}
\frac{p^2+2p}{2p+2} & p>2\\
\frac{2p+4}{p+1} & p\leq 2,
\end{cases}
\qtq{and}
\beta=\beta(p)=\begin{cases}
\frac{p+2}{p+1} & p>2,\\
\frac{4p+8}{3p+8} & p\leq 2.
\end{cases}
\end{equation}

Throughout the rest of the paper $q,r,\alpha$, and $\beta$ will always refer to the above definitions. We also recall the values of $\rho$ \eqref{E:RhoValue} and $s(p)$ \eqref{E:ScalValue}. We note that the space $X(I)$ is ``critical" in the sense that is scales like $L_t^\infty \dot{H}_x^{s(p)}$ (cf. \eqref{E:ScalValue}).

Additionally, for any choice of $p>0$ one has that
\[
\frac{1}{q}+\frac{1}{r}<\frac{1}{2}
\]
which implies $q>\frac{2r}{r-2}$ and $r>\frac{2q}{q-2}$. In particular Proposition \ref{P:LPD} holds with the remainder vanishing in $X(\R)$, and by Sobolev embedding one has
\begin{equation}\label{E:SobforX}
\|u\|_{X(I)}\lesssim \||\nabla|^{s}u\|_{L_t^qL_x^{\frac{2q}{q-2}}(I\times\R^2)}.
\end{equation}

The spaces $X(I)$ and $Y(I)$ are related by the following exotic Strichartz estimate. It can be proven in the standard fashion, i.e. by applying the dispersive estimates for $e^{it\Delta}$ and using the Hardy-Littlewood-Sobolev inequality.

\begin{lemma}
Let $I$ be a compact time interval containing $t_0$. Then for all $t\in I$ and $g\in Y(I)$ we have
\begin{equation}\label{E:ExoStric}
\left\|\int_{t_0}^te^{i(t-s)\Delta}g(s)\,ds\right\|_{X(I)}\lesssim \|g(s)\|_{Y(I)}.
\end{equation}
\end{lemma}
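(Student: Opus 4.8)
The plan is to prove the exotic Strichartz estimate \eqref{E:ExoStric} by the classical $TT^*$-free route: interpolate the dispersive estimate $\|e^{it\Delta}f\|_{L_x^{p'}}\lesssim |t|^{-d(1/2-1/p')}\|f\|_{L_x^p}$ against the $L^2$ conservation law to obtain a fixed-time bound, and then convert the resulting time-decay convolution into a space-time bound via the one-dimensional Hardy--Littlewood--Sobolev inequality. Concretely, write $d=2$ and let $Tg(t) = \int_{t_0}^t e^{i(t-s)\Delta}g(s)\,ds$. For fixed $t,s$ we have $\|e^{i(t-s)\Delta}g(s)\|_{L_x^{r}} \lesssim |t-s|^{-2(1/2 - 1/r)}\|g(s)\|_{L_x^{r'}}$ as long as $2 \le r \le \infty$; here I want $\beta' = $ the Hölder dual of $\beta$, and a direct check of the exponents in \eqref{E:ExpDefsqr}--\eqref{E:ExpDefab} shows that $\beta = r'$ (i.e. $\tfrac1\beta + \tfrac1r = 1$) in both the $p>2$ and $p\le2$ cases, so the spatial part matches up perfectly. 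Thus
\[
\|Tg(t)\|_{L_x^r} \lesssim \int_I |t-s|^{-2(1/2-1/r)}\,\|g(s)\|_{L_x^{\beta}}\,ds.
\]

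The second step is to take the $L_t^q$ norm of both sides and apply Hardy--Littlewood--Sobolev in the time variable: $\big\| \int_I |t-s|^{-\gamma} h(s)\,ds \big\|_{L_t^q(I)} \lesssim \|h\|_{L_t^{\alpha}(I)}$ provided $0 < \gamma < 1$ and the scaling relation $\tfrac1q = \tfrac1\alpha - (1-\gamma)$ holds, where $\gamma = 2(\tfrac12 - \tfrac1r) = 1 - \tfrac2r$. So I need to verify two arithmetic facts from the explicit exponents: first, $0 < 1 - \tfrac2r < 1$, equivalently $2 < r < \infty$, which holds in both regimes (for $p>2$, $r = p+2 > 4$; for $p \le 2$, $r = \tfrac{4p+8}{p} \ge 8$); and second, the balance $\tfrac1\alpha - \tfrac1q = 1 - \gamma = \tfrac2r$. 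Checking the latter: for $p>2$, $\tfrac1\alpha - \tfrac1q = \tfrac{2p+2}{p^2+2p} - \tfrac{2}{p^2+2p} = \tfrac{2p}{p^2+2p} = \tfrac{2}{p+2} = \tfrac2r$, as wanted; for $p\le2$, $\tfrac1\alpha - \tfrac1q = \tfrac{p+1}{2p+4} - \tfrac{1}{2p+4} = \tfrac{p}{2p+4} = \tfrac{p}{2(p+2)} = \tfrac2r$, again as wanted. Combining the two steps gives $\|Tg\|_{X(I)} = \|Tg\|_{L_t^qL_x^r} \lesssim \|g\|_{L_t^\alpha L_x^\beta} = \|g\|_{Y(I)}$, which is \eqref{E:ExoStric}.

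One subtlety worth stating carefully: after bounding $\|Tg(t)\|_{L_x^r}$ pointwise in $t$ by the convolution integral, I am tacitly using Minkowski's integral inequality to pass the $L_x^r$ norm inside the $ds$-integral; this is legitimate since $r \ge 1$. Also, because $I$ is a compact interval, truncating the HLS kernel to $I$ changes nothing and there are no issues of integrability at infinity. The only genuine input beyond bookkeeping is the standard dispersive estimate and the one-dimensional HLS inequality, both of which are classical; I will simply cite them rather than reprove them.

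I expect the main obstacle, such as it is, to be purely the verification that the explicit exponents in \eqref{E:ExpDefsqr}--\eqref{E:ExpDefab} are exactly the ones dictated by the dispersive estimate and HLS scaling — i.e. confirming $\tfrac1\beta + \tfrac1r = 1$ together with $\tfrac1\alpha - \tfrac1q = 1 - \tfrac2r$ in each of the two cases $p>2$ and $p\le 2$. Once those identities are in hand the proof is a two-line interpolation-plus-HLS argument with no remaining difficulty, which is why the lemma is stated without proof in the text; I would include the exponent check in a short displayed computation and leave the rest as "the standard fashion" exactly as the paper suggests.
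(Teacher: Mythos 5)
Your proposal is correct and follows exactly the route the paper indicates for this lemma (fixed-time dispersive estimate with $\beta=r'$, then one-dimensional Hardy--Littlewood--Sobolev in time), and your exponent checks $\tfrac1\beta+\tfrac1r=1$ and $\tfrac1\alpha-\tfrac1q=\tfrac2r$ with $2<r<\infty$ are accurate in both regimes. Nothing further is needed.
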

Finally, an application of H\"older's inequality also yields the following nonlinear estimate
\begin{lemma}
Let $g,h:I\times\R^2\to \C$ and let $a:\R^2\to \R$ be admissible as in Defintion~\ref{D:ADef}. Then 
\begin{equation}\label{E:NonLinEst}
\|agh^p\|_{Y(I)}\leq \|a\|_{L_x^\rho(\R^2)}\|g\|_{X(I)}\|h\|_{X(I)}^p.
\end{equation}
\end{lemma}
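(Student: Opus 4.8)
The plan is to reduce the estimate to a purely computational check that the Hölder exponents balance correctly, after peeling off the weight $a$ in $L^\rho_x$. Recall that the nonlinear term $agh^p$ must be placed in $Y(I) = L^\alpha_t L^\beta_x$. Since $a = a(x)$ depends only on space, I would first apply Hölder's inequality in the spatial variable at each fixed time: writing $\frac{1}{\beta} = \frac{1}{\rho} + \frac{1}{r} + \frac{p}{r}$, one gets
\[
\|a(x)g(t,x)h(t,x)^p\|_{L^\beta_x} \leq \|a\|_{L^\rho_x}\|g(t,\cdot)\|_{L^r_x}\|h(t,\cdot)\|_{L^r_x}^p.
\]
Then I would apply Hölder in time with $\frac{1}{\alpha} = \frac{1}{q} + \frac{p}{q}$, i.e. $\alpha = \frac{q}{p+1}$, to obtain
\[
\|agh^p\|_{L^\alpha_t L^\beta_x} \leq \|a\|_{L^\rho_x}\,\|g\|_{L^q_t L^r_x}\,\|h\|_{L^q_t L^r_x}^p = \|a\|_{L^\rho_x}\|g\|_{X(I)}\|h\|_{X(I)}^p.
\]
So the whole content is verifying the two exponent identities $\frac{1}{\beta} = \frac{1}{\rho} + \frac{p+1}{r}$ and $\frac{1}{\alpha} = \frac{p+1}{q}$ for the definitions in \eqref{E:RhoValue}, \eqref{E:ExpDefsqr}, \eqref{E:ExpDefab}, together with the admissibility conditions that make all the exponents lie in $[1,\infty]$.

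I would split into the two regimes. For $p > 2$: here $\rho = \infty$, $q = \frac{p^2+2p}{2}$, $r = p+2$, $\alpha = \frac{p^2+2p}{2p+2}$, $\beta = \frac{p+2}{p+1}$. The time identity reads $\frac{2p+2}{p^2+2p} \overset{?}{=} (p+1)\cdot\frac{2}{p^2+2p} = \frac{2p+2}{p^2+2p}$, which checks. The space identity with $\frac{1}{\rho}=0$ reads $\frac{p+1}{p+2} \overset{?}{=} \frac{p+1}{p+2}$, which checks. One then notes $\beta \in (1,2)$ and $\alpha \geq 2$ (the latter using $p>2$), so $(\alpha,\beta)$ and the Hölder duals are all legitimate. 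For $p \leq 2$: here $\rho = \frac{4}{4-p}$, $q = 2p+4$, $r = \frac{4p+8}{p}$, $\alpha = \frac{2p+4}{p+1}$, $\beta = \frac{4p+8}{3p+8}$. The time identity: $\frac{p+1}{2p+4} \overset{?}{=} \frac{p+1}{2p+4}$ — immediate. The space identity: $\frac{1}{\rho} + \frac{p+1}{r} = \frac{4-p}{4} + \frac{(p+1)p}{4p+8} = \frac{(4-p)(p+2) + p(p+1)}{4(p+2)} = \frac{8 - 2p + p^2/... }{}$; I would simply expand the numerator to get $(4-p)(p+2) + p(p+1) = 4p + 8 - p^2 - 2p + p^2 + p = 3p + 8$, hence $\frac{1}{\rho} + \frac{p+1}{r} = \frac{3p+8}{4(p+2)} = \frac{3p+8}{4p+8} = \frac{1}{\beta}$, as claimed. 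Again one checks all exponents are admissible: $\beta \in (1,2)$ since $0 < p \leq 2$, $\alpha \geq 2$, and $\rho \in (1,\infty]$.

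The only genuinely delicate point — and the one I would emphasize — is that this argument requires $a \in L^\rho_x$, which is guaranteed by Definition~\ref{D:ADef} precisely because admissibility forces $a \in L^\infty$ when $p>2$ and $a \in L^1 \cap L^\infty$ when $p \leq 2$ (and the value $\rho = \frac{4}{4-p} \in (1, \infty)$ interpolates between $1$ and $\infty$). There is no real analytic obstacle here; the lemma is a bookkeeping statement that certifies that the spaces $X, Y$ were defined with exactly the scaling dictated by the heuristic in the introduction, namely that $agh^p$ with two factors from the critical space $X$ lands in the dual-type space $Y$ against which the exotic Strichartz estimate \eqref{E:ExoStric} is posed. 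I would present the argument as the two Hölder applications above followed by the regime-by-regime exponent verification, keeping the computations terse.
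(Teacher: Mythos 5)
Your proof is correct and is exactly the argument the paper intends: the paper simply states that the lemma follows from an application of H\"older's inequality, and your two H\"older applications (in $x$ with $\tfrac{1}{\beta}=\tfrac{1}{\rho}+\tfrac{p+1}{r}$, then in $t$ with $\tfrac{1}{\alpha}=\tfrac{p+1}{q}$) together with the regime-by-regime exponent check supply precisely that. One small correction: your side remark that $\alpha\geq 2$ when $p>2$ is false (e.g.\ $p=3$ gives $\alpha=\tfrac{15}{8}$), but this is immaterial since H\"older only requires all exponents to lie in $[1,\infty]$, which indeed holds ($\alpha\geq 1$ follows from $p^2\geq 2$).
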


\section{Well-Posedness and Stability}\label{S:Stability}

Using standard arguments we can obtain local well-posedness for solutions to \eqref{nls} under the assumption $a,\nabla a\in L^\infty$. Using conservation of mass and energy we can then obtain global well-posedness. Specifically, we obtain the following result.

\begin{proposition}[Well-Posedness]\label{P:WellPosed}
Let $p>0$ and fix an admissible $a$ as in Definition~\ref{D:ADef}. Then we have
\begin{itemize}
\item[(i)]\textbf{Initial-value problem.} For any $u_0\in H^1(\R^2)$, there exists a unique global solution to \eqref{nls} with $u|_{t=0}=u_0$. Further, we have
\[
\|u\|_{L_t^\infty H_x^1(\R\times\R^2)}\lesssim \|u_0\|_{H^1(\R^2)}.
\]
\item[(ii)]\textbf{Final-state problem.} For any $u_+\in H^1(\R^2)$, there exists a unique global solution $u$ to \eqref{nls} such that
\[
\lim_{t\to\infty}\|u(t)-e^{it\Delta}u_+\|_{H^1}=0,
\]
and further we have
\[
\|u\|_{L_t^\infty H_x^1(\R\times\R^2)}\lesssim \|u_0\|_{H^1(\R^2)}.
\]
A similar statement holds backward in time.
\item[(iii)]\textbf{Scattering criterion.} If $u$ is a solution to \eqref{nls} as above satisfying
\[
\|u\|_{X(\R)}<\infty,
\]
then $u$ scatters in $H^1$ both forward and backward in time.
\item[(iv)]\textbf{Small-data theory.} If $u$ is a solution to \eqref{nls} as above with $M(u)+E(u)$ sufficiently small, then 
\[
\|u\|_{X(\R)}\lesssim \|u\|_{L_t^\infty H_x^1(\R\times\R^2)}.
\]
In particular, $u$ scatters in $H^1$ both forward and backward in time.
\end{itemize}
\end{proposition}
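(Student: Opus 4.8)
The plan is to establish all four items through a contraction-mapping and continuity argument built on the exotic Strichartz estimate \eqref{E:ExoStric}, the nonlinear estimate \eqref{E:NonLinEst}, and the usual Strichartz estimates, and then to upgrade to global-in-time control using the conservation laws. I would actually first prove a local-in-time well-posedness statement: given $u_0\in H^1$, I would set up the Duhamel map $\Phi(u)(t)=e^{it\Delta}u_0-i\int_0^te^{i(t-s)\Delta}a|u|^pu\,ds$ and run the contraction in the space of functions $u$ with $\|u\|_{X(I)}$ small and $\|u\|_{L_t^\infty H_x^1(I\times\R^2)}+\|\langle\nabla\rangle u\|_{(\text{admissible Strichartz norms})}$ bounded by a multiple of $\|u_0\|_{H^1}$. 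The estimate \eqref{E:NonLinEst} with $g=h=u$ gives $\|a|u|^pu\|_{Y(I)}\lesssim\|a\|_{L^\rho}\|u\|_{X(I)}^{p+1}$, and \eqref{E:ExoStric} closes the $X(I)$-piece; for the $H^1$-piece one differentiates, uses the fractional Leibniz rule together with $\nabla a\in L^\infty$ (and $L^1$ when $p\le2$) and Hölder/Sobolev to bound $\langle\nabla\rangle(a|u|^pu)$ in a dual Strichartz norm by $(\|u\|_{X(I)}^p)\,\|\langle\nabla\rangle u\|_{(\text{Strichartz})}$. Since the $X(I)$-norm of the linear evolution is finite and tends to $0$ as $|I|\to0$ (by monotone convergence in the $L_t^q$ norm), the nonlinear term carries a small factor and the contraction closes on a short interval; difference estimates using \eqref{E:PW_1} give uniqueness.

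Next I would prove (i) and (iii) together. A standard persistence-of-regularity / continuity argument shows that the only obstruction to extending the local solution is blow-up of $\|u\|_{L_t^\infty H_x^1}$; but $M(u)$ is conserved, and since $a\ge0$ the energy $E(u)=\tfrac12\|\nabla u\|_2^2+\tfrac1{p+2}\int a|u|^{p+2}$ controls $\|\nabla u\|_2^2$ from above by the conserved quantity $E(u_0)$. Hence $\|u(t)\|_{H^1}\lesssim\|u_0\|_{H^1}$ uniformly, so the solution is global and the stated bound holds. For (iii), the scattering criterion, the hypothesis $\|u\|_{X(\R)}<\infty$ lets one partition $\R$ into finitely many intervals on which $\|u\|_{X(I_k)}$ is small; on each, \eqref{E:ExoStric}, \eqref{E:NonLinEst} and the ordinary Strichartz estimates bootstrap finiteness of all relevant Strichartz norms of $\langle\nabla\rangle u$ on $\R$. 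Then, writing $u_+=u_0-i\int_0^\infty e^{-is\Delta}a|u|^pu\,ds$, Strichartz estimates on a tail $[T,\infty)$ together with \eqref{E:NonLinEst} show $\|u(t)-e^{it\Delta}u_+\|_{H^1}\to0$; the backward statement is symmetric.

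For (iv), small-data scattering: if $M(u)+E(u)$ is small then, as above, $\|u\|_{L_t^\infty H_x^1(\R\times\R^2)}$ is small, so $\|e^{it\Delta}u(t_0)\|_{X(\R)}\lesssim\|u(t_0)\|_{H^1}$ is small uniformly in $t_0$ by Sobolev embedding \eqref{E:SobforX} and Strichartz. Feeding this into the integral equation on all of $\R$, \eqref{E:ExoStric} and \eqref{E:NonLinEst} give $\|u\|_{X(\R)}\lesssim\|u\|_{L_t^\infty H_x^1}+\|a\|_{L^\rho}\|u\|_{X(\R)}^{p+1}$, and a standard continuity/bootstrap argument (the $X$-norm is continuous in the interval and small on short intervals) upgrades this to $\|u\|_{X(\R)}\lesssim\|u\|_{L_t^\infty H_x^1(\R\times\R^2)}$; scattering then follows from (iii). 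Item (ii), the final-state problem, is dual: one solves $u=e^{it\Delta}u_+-i\int_\infty^t e^{i(t-s)\Delta}a|u|^pu\,ds$ by contraction on $[T,\infty)$ for $T$ large (where $\|e^{it\Delta}u_+\|_{X([T,\infty))}$ is small), then extends to all of $\R$ by the global theory of (i).

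I expect the main obstacle to be the low-power regime $0<p<1$, where the nonlinearity $|u|^pu$ is not $C^1$, so the Leibniz-rule manipulations controlling $\langle\nabla\rangle(a|u|^pu)$ and the difference estimates for uniqueness must be done carefully—this is precisely why the paper works with the $X$-space, which scales critically but involves no fractional derivatives, so that the contraction can be closed using only \eqref{E:ExoStric} and \eqref{E:NonLinEst} in $X$ and $Y$ without ever differentiating the nonlinearity, with the $H^1$-bound coming separately and cheaply from the conservation laws rather than from the fixed-point iteration. Keeping those two strands—critical control in $X$ via the exotic estimate, and $H^1$ control via energy conservation—cleanly separated is the key technical point.
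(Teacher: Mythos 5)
Your proposal is correct and follows essentially the route the paper itself intends: the paper gives no detailed proof of Proposition~\ref{P:WellPosed}, deferring to ``standard arguments,'' and your sketch is exactly that standard scheme --- local existence by contraction using \eqref{E:ExoStric} and \eqref{E:NonLinEst} in the derivative-free spaces $X,Y$, globalization via mass conservation and the coercivity of the energy (using $a\geq 0$), the scattering criterion by subdividing $\R$ into intervals of small $X$-norm and bootstrapping Strichartz norms of $\langle\nabla\rangle u$, and the small-data/final-state statements by the usual continuity and tail-contraction arguments, with the exponent bookkeeping working out precisely because of the admissibility conditions on $a$ (in particular $a,\nabla a\in L^\rho$). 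One small remark: since only one full derivative is ever taken, the $H^1$-level estimates need only the ordinary chain rule (with $|\partial(|u|^pu)|\lesssim|u|^p$), so no fractional Leibniz rule is required even for $0<p<1$; the non-smoothness of the nonlinearity is an issue for the stability theory of Section~\ref{S:Stability}, not for this proposition.
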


A crucial part of establishing Theorem~\ref{T} is a suitable stability theory for \eqref{nls}. However, as mentioned in Section~\ref{S:Intro}, to develop such a theory for all $p>0$ introduces new obstacles. Here we follow the approach of \cite{HR, M-half} and work with spaces which scale critically but which do not involve derivatives. The advantage of such spaces is that in what follows the error is only required to be small in a space without derivatives. In fact, because the problem is always $H^1$ subcritical and global existence is already guaranteed for $H^1$ data, we don't need to control any derivatives of the error.

We first establish our result on time intervals for which the $X(I)$-norm of the approximate solution is assumed small. 

In what follows we will use the notation $f(z)=|z|^pz$ for some $p>0$ and $F(x,z)=a(x)f(z)$, for an admissible $a$. When there is no risk of confusion we will suppress this $x$-dependence.

\begin{proposition}[Short-time perturbations] Let $t_0\in I$, $u_0\in H^1$, and suppose $\tilde{u}:I\times\R\to\C$ is a solution to the equation
\[
(i\partial_t+\Delta)\tilde{u}=F(\tilde{u})+e,
\]
for some function $e$. Then there exist $\varepsilon_0=\varepsilon_0(\|a\|_{L^\rho})$ and $\delta=\delta(\|a\|_{L^\rho})$ such that for all $0<\varepsilon\leq \varepsilon_0$ if
\begin{equation}\label{E:STP_A1}
\|\tilde{u}\|_{X(I)}<\delta,
\end{equation}
\begin{equation}\label{E:STP_A2}
\|e^{i(t-t_0)\Delta}(u_0-\tilde{u}_0)\|_{X(I)}<\varepsilon,
\end{equation}
and
\begin{equation}\label{E:STP_A3}
\|e\|_{Y(I)}<\varepsilon,
\end{equation}
then there is a solution to \eqref{nls}, $u$, with $u(t_0)=u_0$ satisfying
\begin{equation}\label{E:STP_C1}
\|u-\tilde{u}\|_{X(I)}\lesssim \varepsilon,
\end{equation}
and
\begin{equation}\label{E:STP_C2}
\|F(u)-F(\tilde{u})\|_{Y(I)}\lesssim \varepsilon.
\end{equation}
\end{proposition}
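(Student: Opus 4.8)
The plan is to run a standard contraction/fixed-point argument in the Strichartz-type spaces $X(I)$ and $Y(I)$, exploiting that the nonlinear estimate \eqref{E:NonLinEst} and the exotic Strichartz estimate \eqref{E:ExoStric} are the only analytic inputs needed, and that no derivatives ever enter because global $H^1$ existence is already in hand via Proposition~\ref{P:WellPosed}. First I would set $w = u - \tilde u$, which formally solves $(i\partial_t+\Delta)w = F(\tilde u + w) - F(\tilde u) - e$ with $w(t_0) = u_0 - \tilde u_0$, and write the corresponding Duhamel formula
\[
w(t) = e^{i(t-t_0)\Delta}(u_0-\tilde u_0) - i\int_{t_0}^t e^{i(t-s)\Delta}\bigl[F(\tilde u+w)-F(\tilde u)\bigr](s)\,ds + i\int_{t_0}^t e^{i(t-s)\Delta}e(s)\,ds.
\]
Taking the $X(I)$-norm, using \eqref{E:ExoStric} on the two integral terms and \eqref{E:STP_A2}, \eqref{E:STP_A3} on the data and error terms gives
\[
\|w\|_{X(I)} \lesssim \varepsilon + \|F(\tilde u + w) - F(\tilde u)\|_{Y(I)}.
\]

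Next I would estimate the nonlinear difference. Using the pointwise bound \eqref{E:PW_1} with $c_1 = \tilde u$, $c_2 = w$, we get $|f(\tilde u + w) - f(\tilde u)| \lesssim_p |w|^{p+1} + |w||\tilde u|^p$ pointwise, so $|F(\tilde u+w)-F(\tilde u)| \lesssim a(|w|^{p+1} + |w||\tilde u|^p)$; applying \eqref{E:NonLinEst} to each piece yields
\[
\|F(\tilde u + w) - F(\tilde u)\|_{Y(I)} \lesssim \|a\|_{L_x^\rho}\bigl(\|w\|_{X(I)}^{p+1} + \|w\|_{X(I)}\|\tilde u\|_{X(I)}^p\bigr).
\]
Combining, and writing $D = \|w\|_{X(I)}$, we obtain $D \lesssim \varepsilon + \|a\|_{L^\rho}(D^{p+1} + D\,\delta^p)$. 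Now I would choose $\delta = \delta(\|a\|_{L^\rho})$ so that the constant times $\|a\|_{L^\rho}\delta^p$ is at most $\tfrac12$, absorb that term, and then run the standard continuity/bootstrap argument: on subintervals where $D$ is known finite (which holds for $I$ itself once we know a genuine solution $u$ with $u(t_0)=u_0$ exists — supplied by Proposition~\ref{P:WellPosed}(i) — so $w\in X$ on $I$ by finiteness of both $\|u\|_X$ and $\|\tilde u\|_X$), the inequality $D \le C\varepsilon + C\|a\|_{L^\rho}D^{p+1}$ together with $D$ small for $\varepsilon_0$ small forces $D \lesssim \varepsilon$, which is \eqref{E:STP_C1}. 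Feeding this back into the nonlinear difference estimate above gives $\|F(u)-F(\tilde u)\|_{Y(I)}\lesssim \|a\|_{L^\rho}(\varepsilon^{p+1}+\varepsilon \delta^p)\lesssim \varepsilon$, which is \eqref{E:STP_C2}. The existence and uniqueness of $u$ itself is not really part of the perturbation argument — it is handed to us by the well-posedness theory — so the proposition is really an a priori bound; alternatively one runs the contraction mapping directly on the Duhamel map for $w$ in a small ball of $X(I)$ to get existence self-contained, the fixed-point contraction constant being controlled by the same $\|a\|_{L^\rho}\delta^p$ smallness.

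The one subtlety, and the place I expect the mild technical care to be needed, is the bootstrap when $p < 1$: the term $D^{p+1}$ has sublinear-in-excess-of-linear exponent only when $p>0$ in general, but since $p+1 > 1$ always, the nonlinear term is genuinely higher order and the continuity argument closes as usual; the real reason the paper emphasizes $p<1$ difficulties is that \eqref{E:PW_1} with a $|w|^{p+1}$ term is only Hölder-continuous, not Lipschitz, in $w$, so one cannot run a naive Lipschitz contraction in $X(I)$ — one must either accept the a priori-bound formulation (using the externally supplied solution $u$) or set up the fixed point more carefully, e.g. in a complete metric space with a Hölder-type metric, as in \cite{HR, M-half}. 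I would take the a priori-bound route here, since Proposition~\ref{P:WellPosed} already guarantees the solution exists, so the only thing to prove is the quantitative closeness \eqref{E:STP_C1}–\eqref{E:STP_C2}, and for that the chain \eqref{E:ExoStric} $\to$ \eqref{E:PW_1} $\to$ \eqref{E:NonLinEst} $\to$ absorb $\to$ bootstrap is entirely routine.
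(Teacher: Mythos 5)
Your proposal is correct and follows essentially the same route as the paper: take existence and globality of $u$ from the well-posedness theory, set $w=u-\tilde u$, apply the Duhamel formula with the exotic Strichartz estimate \eqref{E:ExoStric}, the pointwise bound \eqref{E:PW_1}, and the nonlinear estimate \eqref{E:NonLinEst}, absorb the $\|a\|_{L^\rho}\delta^p\|w\|_{X}$ term for $\delta$ small, and close with a continuity argument to get \eqref{E:STP_C1}, then feed this back in for \eqref{E:STP_C2}. Your remarks on the $p<1$ (non-Lipschitz) issue and the a priori-bound formulation match the paper's stated rationale for working in derivative-free critical spaces.
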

\begin{proof}
We have that $u_0\in H^1$ guarantees  $u$ exists and is global. Setting $w=u-\tilde{u}$, we observe that $w$ solves the equation
\[
(i\partial_t+\Delta)w=F(\tilde{u}+w)-F(\tilde{u})-e.
\]
Now using the Duhamel formulation we may estimate via the exotic Strichartz estimate \eqref{E:ExoStric}, \eqref{E:STP_A2}, \eqref{E:STP_A3}, the pointwise estimate for the nonlinearity \eqref{E:PW_1}, the nonlinear estimate \eqref{E:NonLinEst}, and \eqref{E:STP_A3} again,
\begin{align*}
\|w\|_{X(I)} &\lesssim \varepsilon+\|F(\tilde{u}+w)-F(\tilde{u})\|_{Y(I)}+\|e\|_{Y(I)}\\
&\lesssim \varepsilon+ \|a\|_{L_x^\rho}(\|w\|_{X(I)}^{p+1}+\|w\|_{X(I)}\|\tilde{u}\|_{X(I)}^p)\\
&\lesssim_a \varepsilon+\delta^p\|w\|_{X(I)}+\|w\|_{X(I)}^{p+1}.
\end{align*}
In particular, for $\delta$ sufficiently small  we obtain
\[
\|w\|_{X(I)}\lesssim_a \varepsilon+\|w\|_{X(I)}^{p+1},
\]
so that for $\varepsilon$ sufficiently small we obtain \eqref{E:STP_C1}. To conclude \eqref{E:STP_C2} we estimate as above, using \eqref{E:PW_1}, \eqref{E:NonLinEst}, \eqref{E:STP_A1}, and \eqref{E:STP_C1}.
\end{proof}

By iterating the above proposition we obtain our stability result, which removes the smallness assumption on the $X(I)$-norm of the approximate solution. 

\begin{proposition}[Stability]\label{P:Stability}
Let $t_0\in I$, $u_0\in H^1$, and suppose $\tilde{u}:I\times\R^2\to \C$ is a solution to the equation
\[
(i\partial_t+\Delta)\tilde{u}=F(\tilde{u})+e,
\]
satisfying
\begin{equation}\label{E:Stab_A1}
\|\tilde{u}\|_{X(I)}\leq L,
\end{equation}
for some $L>0$. Additionally suppose
\begin{equation}\label{E:Stab_A2}
\|u_0-\tilde{u}_0\|_{H^1}+\|e\|_{Y(I)}<\varepsilon,
\end{equation}
for some $\varepsilon\leq \varepsilon_1=\varepsilon_1(L)$. Then the global solution $u$ to \eqref{nls} with initial data $u(t_0)=u_0$ satisfies
\[
\|u-\tilde{u}\|_{X(I)}+ \|F(u)-F(\tilde{u})\|_{Y(I)}\lesssim\varepsilon.
\]
\end{proposition}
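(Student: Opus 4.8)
The plan is to subdivide $I$ into finitely many subintervals on which $\tilde u$ has small $X$-norm and then apply the short-time perturbation proposition inductively, carefully tracking how the error accumulates from one subinterval to the next. Throughout, let $\delta=\delta(\|a\|_{L^\rho})$ and $\varepsilon_0=\varepsilon_0(\|a\|_{L^\rho})$ be the parameters furnished by the preceding proposition. We may assume $t_0$ is an endpoint of $I$; otherwise we split $I$ at $t_0$ and run the argument forward on $[t_0,\sup I]$ and backward on $[\inf I,t_0]$, the two cases being symmetric. We also record the elementary linear estimate $\|e^{it\Delta}h\|_{X(J)}\lesssim\|h\|_{H^1}$ for any compact interval $J$, which follows from \eqref{E:SobforX}, the Strichartz estimate for the admissible pair $(q,\tfrac{2q}{q-2})$, and the fact that $0\le s\le 1$.

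Since $X=L_t^qL_x^r$ with $q<\infty$, we have $\|\tilde u\|_{X(I)}^q=\sum_k\|\tilde u\|_{X(I_k)}^q$ for any partition of $I$ into consecutive subintervals, so a standard continuity argument produces a partition $I=\bigcup_{k=0}^{K-1}I_k$, $I_k=[t_k,t_{k+1}]$, with $\|\tilde u\|_{X(I_k)}<\delta$ for each $k$ and with $K\le(L/\delta)^q+1$. In particular $K=K(L,\|a\|_{L^\rho})$ depends only on $L$ and $\|a\|_{L^\rho}$, not on $\tilde u$ itself. Note also that $u$ exists globally by Proposition~\ref{P:WellPosed}(i), so there is no issue of the solution leaving an interval.

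Now I would run the induction on $k$. Write $w=u-\tilde u$, which solves $(i\partial_t+\Delta)w=F(u)-F(\tilde u)-e$. On $I_0$, the linear estimate and \eqref{E:Stab_A2} give $\|e^{i(t-t_0)\Delta}(u_0-\tilde u_0)\|_{X(I_0)}\lesssim\|u_0-\tilde u_0\|_{H^1}<\varepsilon$, so once $\varepsilon$ is small enough the short-time proposition applies on $I_0$ and yields $\|w\|_{X(I_0)}+\|F(u)-F(\tilde u)\|_{Y(I_0)}\le C_0\,\varepsilon$ for a constant $C_0=C_0(\|a\|_{L^\rho})$. For the inductive step, suppose such a bound with constant $C_i$ has been established on $I_i$ for all $i<k$. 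By the Duhamel formula and the group law, for $t\in I_k$,
\[
e^{i(t-t_k)\Delta}w(t_k)=e^{i(t-t_0)\Delta}(u_0-\tilde u_0)-i\int_{t_0}^{t_k}e^{i(t-s)\Delta}\big[F(u)-F(\tilde u)-e\big](s)\,ds,
\]
and the exotic Strichartz estimate \eqref{E:ExoStric} (applied on $[t_0,t_{k+1}]$), together with \eqref{E:Stab_A2} and the inductive bounds, gives
\[
\big\|e^{i(t-t_k)\Delta}w(t_k)\big\|_{X(I_k)}\lesssim\varepsilon+\sum_{i<k}\|F(u)-F(\tilde u)\|_{Y(I_i)}+\|e\|_{Y(I)}\le C_k'\,\varepsilon
\]
for a constant $C_k'$ depending only on $k$ and $\|a\|_{L^\rho}$. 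Provided $\varepsilon\le\varepsilon_1=\varepsilon_1(L)$ is small enough that $C_k'\,\varepsilon\le\varepsilon_0$ for every $k<K$ — which is possible since there are only $K=K(L)$ values of $k$ — the short-time proposition applies on $I_k$ with data $u(t_k),\tilde u(t_k)$, producing $\|w\|_{X(I_k)}+\|F(u)-F(\tilde u)\|_{Y(I_k)}\le C_k\,\varepsilon$ with $C_k$ again depending only on $k$ and $\|a\|_{L^\rho}$. This closes the induction.

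Summing over the $K=K(L)$ subintervals then gives $\|u-\tilde u\|_{X(I)}+\|F(u)-F(\tilde u)\|_{Y(I)}\lesssim_{L,\|a\|_{L^\rho}}\varepsilon$, as claimed. The one genuinely delicate point is the bookkeeping in the iteration: the smallness parameter that must be fed into the short-time proposition on $I_k$ — namely the $X(I_k)$-norm of the free evolution of $w(t_k)$ — grows geometrically in $k$, since each step pushes the accumulated nonlinear error forward in time, yet it must be kept below the \emph{fixed} threshold $\varepsilon_0$; this is precisely why $\varepsilon_1$ must be allowed to degenerate as $L\to\infty$, through the number of subintervals $K$. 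Everything else is a routine reprise of the Strichartz and H\"older estimates already carried out in the short-time proposition.
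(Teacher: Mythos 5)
Your argument is correct and is exactly the standard iteration of the short-time perturbation result that the paper itself invokes (citing \cite{KV}) without writing out details: partition $I$ into $K=K(L)$ intervals of small $X$-norm, use Duhamel with the exotic Strichartz estimate to propagate the $H^1$-smallness of $u_0-\tilde u_0$ plus the accumulated nonlinear errors, and shrink $\varepsilon_1(L)$ to keep the accumulated constants below the short-time threshold. You also correctly identify the point the paper flags in its remark — that the stronger $H^1$ condition on the initial difference, together with global existence of $u$, is what supplies the required free-evolution smallness at each step — so there is nothing to add.
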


We omit the proof as it is standard (see \cite{KV} for an in-depth treatment), however we remark that there is a mismatch in the smallness condition placed on $u_0-\tilde{u}_0$ here compared to in the short-time case. The point is that this stronger requirement paired with global existence of $u$ allow us obtain the weaker condition upon each iteration of the short-time case.

\section{Scattering Far Away Profiles: $p\leq 2$}\label{S:SFAP1}

As mentioned in Section~\ref{S:Intro} a key ingredient in establishing Theorem~\ref{T} is to show that if Theorem~\ref{T} were to fail, we could construct a solution to \eqref{nls} living at the mass/energy threshold whose orbit in $H^1$ is pre-compact. This is carried out in Section~\ref{S:Reduction}, with the majority of the work contained in the Palais-Smale condition Proposition~\ref{P:PSmale}. The idea will be to construct nonlinear profiles corresponding to the linear profiles obtained by applying the linear profile decomposition, Proposition~\ref{P:LPD}. Due to the broken translation symmetry, in the case that the $|x_n^j|\to\infty$ we cannot simply apply a translation to construct the desired solution. In the case that $p\leq2$, due to the assumption that $a$ decays at infinity, we expect the nonlinearity $a|u|^pu$ to be very weak far from the origin. Hence, we expect that a suitable solution to the free problem may serve as a good approximate solution to \eqref{nls}. The following proposition makes this precise.

\begin{proposition}[Scattering for far-away profiles: $p\leq 2$]\label{P:SFAP1}
Let $0<p\leq 2$, let $a$ be admissible in the sense of Definition~\ref{D:ADef}, and let $\varphi\in H^1$. Let $\{t_n\}\subseteq \R$ be such that $t_n\equiv0$ or $|t_n|\to\infty$ and let $\{x_n\}\subset \R^2$ satisfy $|x_n|\to \infty$. Then for $n$ sufficiently large there exists a global solution, $v_n$, to \eqref{nls} with
\[
v_n(0,x)=\varphi_n(x)=e^{it_n\Delta}\varphi(x-x_n)
\]
that obeys the bound
\[
\|v_n\|_{X(\R)}\lesssim_{\|\varphi\|_{H^1}}1.
\]
In particular, $v_n$ scatters both forward and backward in time. Furthermore, for any $\varepsilon>0$ there exists $N$ and $\psi\in C_c^\infty(\R\times\R^2)$ such that for $n\geq N$,
\[
\|v_n-\psi(\cdot+t_n,\cdot-x_n)\|_{X(\R)}<\varepsilon.
\]
\end{proposition}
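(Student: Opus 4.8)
The plan is to build $v_n$ as a perturbation of the free evolution of the profile, using the stability result Proposition~\ref{P:Stability}. Concretely, let $w(t) = e^{it\Delta}e^{it_n\Delta}\varphi$ be the free solution with data $e^{it_n\Delta}\varphi$ at time $0$ — note that in the case $|t_n|\to\infty$ it is cleaner to first replace $e^{it_n\Delta}\varphi$ by a compactly supported approximation. Then set $\tilde{u}_n(t,x) = w(t, x - x_n)$, which by the translation invariance of the \emph{free} flow is itself the free evolution of $\varphi_n$. This $\tilde u_n$ solves $(i\partial_t+\Delta)\tilde u_n = a(x)|\tilde u_n|^p\tilde u_n + e_n$ with error $e_n = -a(x)|\tilde u_n|^p\tilde u_n = -a(x)|w(t,x-x_n)|^p w(t,x-x_n)$. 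The whole argument reduces to showing $\|e_n\|_{Y(\R)}\to 0$ as $n\to\infty$, after which Proposition~\ref{P:Stability} produces the genuine solution $v_n$ with $\|v_n - \tilde u_n\|_{X(\R)}\to 0$, hence $\|v_n\|_{X(\R)}\lesssim \|\tilde u_n\|_{X(\R)}+1 \lesssim \|w\|_{X(\R)} + 1 \lesssim_{\|\varphi\|_{H^1}}1$ by the Strichartz/Sobolev bound \eqref{E:SobforX}; the scattering conclusion then follows from Proposition~\ref{P:WellPosed}(iii).

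For the error estimate, first reduce to a nice profile: given $\eta>0$ pick $\phi = \phi_\eta\in C_c^\infty(\R^2)$ with $\|e^{it_n\Delta}\varphi - e^{it_n\Delta}\phi\|$ controlled appropriately — actually it is cleaner to choose, using density and the existence of wave operators, a $\phi$ such that $\|e^{it\Delta}(e^{it_n\Delta}\varphi - \phi^{(n)})\|_{X(\R)}<\eta$ where $\phi^{(n)}$ is chosen depending on whether $t_n\equiv 0$ or $|t_n|\to\infty$; when $t_n\equiv0$ just take $\phi$ close to $\varphi$ in $H^1$, and when $|t_n|\to\infty$ take $\phi\in C_c^\infty$ close in $X$ to $e^{it\Delta}(e^{it_n\Delta}\varphi)$ restricted near the origin, exploiting that $\|e^{it\Delta}e^{it_n\Delta}\varphi\|_{X(\{|t|<T\})}\to0$ since $|t_n|\to\infty$. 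Now estimate $\|a(x)|w(t,x-x_n)|^{p+1}\|_{Y(\R)}$ by the nonlinear estimate \eqref{E:NonLinEst}, which gives a bound by $\|a\|_{L^\rho}$ times $\|w(\cdot-x_n)\|_{X(\R)}^{p+1}$ — but that does not decay. The decay must come from the interplay of the decay of $a$ at infinity (recall $a\in L^1\cap L^\infty$, so $a\in L^\rho$ for the relevant $\rho = \frac{4}{4-p}$) with the fact that $w(t,x-x_n)$ is, for the truncated profile, supported (up to small tails) in a region escaping to infinity. For the truncated piece where $w$ comes from $\phi^{(n)}\in C_c^\infty$, the free solution $e^{it\Delta}\phi^{(n)}$ has good space-time decay, and translating by $x_n$ moves its bulk to where $a$ is small: quantitatively, split the $x$-integral into $|x|\le |x_n|/2$ and $|x|>|x_n|/2$; on the first region $|w(t,x-x_n)|$ is small by dispersive decay (the argument $x-x_n$ is large), on the second region $\|a\|_{L^\rho(|x|>|x_n|/2)}\to0$ by dominated convergence. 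The contribution of the non-truncated tail is handled by \eqref{E:NonLinEst} and is $O(\eta)$ uniformly in $n$.

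The main obstacle is organizing this truncation-and-decay argument so that it works uniformly in the time variable and handles the $|t_n|\to\infty$ case, where the "profile at time $0$" $e^{it_n\Delta}\varphi$ is itself spread out and not compactly supported; one cannot simply say "$w$ is concentrated near $x_n$." The resolution is the standard two-parameter approximation: approximate $\varphi$ (or $e^{it_n\Delta}\varphi$) in $X(\R)$ by a function whose free evolution is supported in a fixed compact space-time set $K$ up to error $\eta$ — this uses that $e^{it\Delta}$ maps a dense subclass to functions with $\|e^{it\Delta}g\|_{X(\{|t|>T\})}<\eta$ for $T$ large, combined with local smoothing/compactness to truncate in space on $|t|\le T$ — and then translating $K$ by $x_n$ puts it in the region $\{|x|\gtrsim|x_n|\}$ where $\|a\|_{L^\rho}$ is as small as we like. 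Finally, the last sentence of the proposition (approximation of $v_n$ by $\psi(\cdot + t_n,\cdot-x_n)$ for a single $\psi\in C_c^\infty(\R\times\R^2)$) follows by combining $\|v_n - \tilde u_n\|_{X}\to0$ with the above: $\tilde u_n(t,x) = [e^{it\Delta}e^{it_n\Delta}\varphi](x-x_n)$, and by density there is $\psi\in C_c^\infty(\R\times\R^2)$ with $\|e^{it\Delta}e^{it_n\Delta}\varphi - \psi(t + t_n, \cdot)\|_{X(\R)}$ — here in the case $t_n\equiv0$ one uses $\|e^{it\Delta}\varphi - \psi\|_{X}<\varepsilon$ directly, and in the case $|t_n|\to\infty$ one uses that $e^{i(t+t_n)\Delta}\varphi$ scatters so its $X$-norm can be approximated by a compactly supported space-time function after the time shift — small, and translating the space variable by $x_n$ (which preserves the $X$-norm) gives the claim.
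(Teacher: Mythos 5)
Your proposal is correct in substance, but it follows a genuinely different route from the paper. The paper builds its approximate solution by spatially truncating the free evolution with cutoffs $\chi_n$ supported in $\{|x|\gtrsim|x_n|\}$ on a finite time window $[-T,T]$ and gluing on free evolutions for $|t|>T$; the error then consists of cutoff-derivative terms (controlled by the symbol bounds $|\partial^\kappa\chi_n|\lesssim|x_n|^{-|\kappa|}$) plus a nonlinear term controlled by $\|a\|_{L^\rho(|x|>\frac14|x_n|)}\to0$. You instead take the exact free evolution $\tilde u_n(t,x)=[e^{i(t+t_n)\Delta}\varphi](x-x_n)$ as the approximate solution (so the data match $\varphi_n$ exactly and no gluing or cutoff-error terms appear), and the whole burden falls on showing $\|a\,f(\tilde u_n)\|_{Y(\R)}\to0$, which indeed follows from approximating the space-time profile $e^{it\Delta}\varphi$ in $X(\R)$ by some $\psi\in C_c^\infty(\R\times\R^2)$ (using $q,r<\infty$), handling the difference via \eqref{E:PW_1} and \eqref{E:NonLinEst}, and noting that $\psi(\cdot,\cdot-x_n)$ is supported in $\{|x|\ge|x_n|-R\}$ where $\|a\|_{L^\rho}\to0$ since $\rho<\infty$ for $p\le2$; stability, the bound \eqref{E:SobforX}, and translation invariance of the $X$-norm then give everything, including the final $C_c^\infty$ approximation. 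Your route is arguably leaner here, and it is precisely where it exploits $\rho<\infty$ that it cannot extend to $p>2$, which is why the paper's cutoff construction is set up as it is: it is the template that Lemma~\ref{L:Cutoffs} and Proposition~\ref{P:SFAP2} generalize when $a$ need not decay and the limiting model is nonlinear. Two small points to clean up: the smallness on the region $|x|\le|x_n|/2$ is not a consequence of ``dispersive decay'' of $w(t,x-x_n)$ uniformly in $t$ (the free evolution spreads at large times); the correct mechanism is exactly your later one, namely that the compactly supported space-time approximant vanishes there for large $n$ (equivalently, dominated convergence applied to the tail of the finite $L_t^qL_x^r$ norm), and no local smoothing is needed, only density of $C_c^\infty$ in $L_t^qL_x^r$. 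Likewise the separate treatment of $|t_n|\to\infty$ is unnecessary: since $a$ is time-independent and the $X(\R)$, $Y(\R)$ norms are invariant under time translation, the parameter $t_n$ drops out of the error estimate entirely.
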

\begin{proof}
Note that for $p\leq 2$, the repulsivity condition on $a$ paired with the fact that $a\in L^1$ implies that $\displaystyle \lim_{|x|\to \infty}a(x)=0$.

For each $n$ we define smooth cutoff functions satisfying
\[
\chi_n(x)=\begin{cases}
    1 & |x+x_n|>\frac{1}{2}|x_n|\\
    0 & |x+x_n|<\frac{1}{4}|x_n|,
\end{cases}
\]
that obey the symbol bounds $|\partial^{\kappa}\chi_n|\lesssim |x_n|^{-|\kappa|}$, for multiindices $\kappa$. We remark that $\chi_n\to 1$ pointwise as $n\to\infty$.

We now define for $T\geq 1$ define approximate solutions $\tilde{v}_{n,T}$ as follows: For $|t|\leq T$ we define
\[
\tilde v_{n,T}(t,x)=\chi_n(x-x_n)e^{it\Delta}\varphi(x-x_n),
\]
and for $|t|>T$ we define $\tilde{v}_{n,T}$ by evolving under the free equation, so that
\[
\tilde v_{n,T}(t,x)=\begin{cases}
    e^{i(t-T)\Delta}\tilde v_{n,T}(T) & t>T\\
    e^{i(t+T)\Delta}\tilde{v}_{n,T}(-T) & t<T.
\end{cases}
\]
Our goal will be to show that for $n$ and $T$ sufficiently large, the $\tilde{v}_{n,T}$ define suitable approximate solutions to \eqref{nls}, and then to apply stability to construct our true solution. To this end we will verify the following three claims conerning the $\tilde{v}_{n,T}$:
\begin{equation}\label{E:SFAP1_C1}
\lim_{T\to\infty}\limsup_{n\to\infty}\|\tilde{v}_{n,T}(t_n)-\varphi_n\|_{H^1}=0,
\end{equation}
\begin{equation}\label{E:SFAP1_C2}
\limsup_{T\to\infty}\limsup_{n\to\infty}\|\tilde{v}_{n,T}\|_{X(\R)}\lesssim_{\|\varphi\|_{H^1}}1,
\end{equation}
and
\begin{equation}\label{E:SFAP1_C3}
\lim_{T\to\infty}\limsup_{n\to\infty}\|e_{n,T}\|_{Y(\R)}=0,
\end{equation}
where
\[
e_{n,T}:=(i\partial_t+\Delta)\tilde{v}_{n,T}-a(x)|\tilde{v}_{n,T}|^p\tilde v_{n,T}.
\]
\begin{proof}[Proof of \eqref{E:SFAP1_C1}]
First suppose that $t_n\equiv0$, so that
\begin{align*}
\|\tilde{v}_{n,T}(t_n)-\varphi_n\|_{H^1}=\|(\chi_n-1)\varphi\|_{H^1},
\end{align*}
which tends to $0$ as $n\to\infty$ by the Dominated Convergence Theorem. If instead $t_n\to \infty$, then choosing $n$ sufficiently large we have $t_n>T$, so that
\begin{align*}
\|\tilde{v}_{n,T}(t_n)-\varphi_n\|_{H^1}&=\|(\chi_n-1)e^{iT\Delta}\varphi\|_{H^1},
\end{align*}
which again tends to $0$ as $n\to\infty$ by the Dominated Convergence Theorem. The case of $t_n\to-\infty$ is similar.
\end{proof}

\begin{proof}[Proof of \eqref{E:SFAP1_C2}]
We estimate on the regions $|t|\leq T$ and $|t|>T$ separately. For $|t|\leq T$, we estimate by H\"older's inequality, \eqref{E:SobforX}, and Strichartz estimates 
\begin{align*}
\|\tilde{v}_{n,T}\|_{X([-T,T])}&\lesssim \|\chi_n\|_{L^\infty}\|e^{it\Delta}|\nabla|^{s}\varphi\|_{L_t^qL_x^\frac{2q}{q-2}([-T,T]\times\R^2)}\\
&\lesssim \|\varphi\|_{H^1}.
\end{align*}
On the region $t>T$ we estimate by \eqref{E:SobforX} and Strichartz estimates
\begin{align*}
\|\tilde{v}_{n,T}\|_{X(T,\infty)}&\lesssim\|e^{i(t-T)\Delta}|\nabla|^{s} \tilde{v}_{n,T}(T)\|_{L_t^qL_x^\frac{2q}{q-2}}\\
&\lesssim\|\tilde{v}_{n,T}(T)\|_{H_x^1},
\end{align*}
and likewise for $t<-T$ we will have 
\[
\|\tilde{v}_{n,T}\|_{X(-\infty,-T)}\lesssim \|\tilde{v}_{n,T}(-T)\|_{H_x^1}.
\]
Thus the claim will follow once we establish uniform $L_t^\infty H_x^1([-T,T]\times\R^2)$-bounds for the $\tilde{v}_{n,T}$. The $L^2$-bounds are immediate, and for the derivative we estimate by the product rule and H\"older's inequality:
\begin{align*}
\|\nabla \tilde{v}_{n,T}\|_{L_t^\infty L_x^2}&\lesssim \|\nabla \chi_n\|_{L^\infty_x}\|e^{it\Delta}\varphi\|_{L_t^\infty L_x^2}+\|\chi_n\|_{L_x^\infty}\|e^{it\Delta}\nabla \varphi\|_{L_t^\infty L_x^2}\\
&\lesssim \|\varphi\|_{H^1},
\end{align*}
which is acceptable.
\end{proof}
\begin{proof}[Proof of \eqref{E:SFAP1_C3}]
Again we consider the temporal regions separately. For $|t|>T$ we have
\[
e_{n,T}=-a(x)|\tilde{v}_{n,T}|^p\tilde{v}_{n,T}.
\]
Thus, we may estimate via the nonlinear estimate \eqref{E:NonLinEst}, 
\begin{align*}
\|e_{n,T}\|_{Y(T,\infty)}&\lesssim \|a\|_{L_x^\rho}\|\tilde{v}_{n,T}\|_{X(T,\infty)}^{p+1},
\end{align*}
so that by \eqref{E:SFAP1_C2} the above tends to $0$ as $n,T\to\infty$ by the Dominated Convergence Theorem. The estimate on $(-\infty,-T)$ is identical.

We are left with estimating the error on $[-T,T]$. Here we have
\begin{align}
e_{n,T}(t,x)&=\Delta \chi_n(x-x_n) e^{it\Delta}\varphi(x-x_n)\label{E:SFAP1_error1}\\
&\quad+2\nabla \chi_n(x-x_n)\cdot \nabla e^{it\Delta}\varphi(x-x_n)\label{E:SFAP1_error2}\\
&\quad-a(x)|\tilde{v}_{n,T}|^p\tilde{v}_{n,T}.\label{E:SFAP1_error3}
\end{align}
We now estimate the contributions of each of these terms separately. Recall the definition of the exponents $\alpha,\beta$ from \eqref{E:ExpDefab}. Now on the support of $\Delta\chi_n(x-x_n)$ we estimate \eqref{E:SFAP1_C1} by H\"older's inequality, the symbol bounds on the $\chi_n$, and Strichartz estimates:
\begin{align*}
\|\eqref{E:SFAP1_error1}\|_{Y([-T,T])}&\lesssim |T|^{\frac{1}{\alpha}}\|\Delta\chi_n\|_{L^\frac{2\beta}{2-\beta}}\|e^{it\Delta}\varphi\|_{L_t^\infty L_x^2}\\
&\lesssim |T|^\frac{1}{\alpha}|x_n|^{-\frac{2+\beta}{\beta}}\|\varphi\|_{L^2}\\
&\to 0\qtq{as} n\to\infty.
\end{align*}
Similarly, for \eqref{E:SFAP1_error2} we estimate on the support of $\nabla\chi_n(x-x_n)$ again by H\"older's inequality, the symbol bounds, and Strichartz estimates:
\begin{align*}
\|\eqref{E:SFAP1_error2}\|_{Y([-T,T])}&\lesssim |T|^\frac{1}{\alpha }\|\nabla\chi_n\|_{L^\frac{2\beta}{2-\beta}}\|e^{it\Delta}\nabla \varphi\|_{L_t^\infty L_x^2}\\
&\lesssim |T|^\frac{1}{\alpha}|x_n|^{-\frac{2}{\beta}}\|\varphi\|_{H^1}\\
&\to 0\qtq{as}n\to\infty.
\end{align*}
Finally, for \eqref{E:SFAP1_error3} we estimate via the nonlinear estimate \eqref{E:NonLinEst}
\begin{align*}
\|\eqref{E:SFAP1_error3}\|_{Y([-T,T])}&\lesssim \|a\chi_n(\cdot-x_n)\|_{L_x^\rho}\|\tilde{v}_{n,T}\|^{p+1}_{X([-T,T])}\\
&\lesssim \|a\|_{L_x^{\rho}(|x|>\frac{1}{4}|x_n|)}\|\tilde{v}_{n,T}\|_{X([-T,T])}^{p+1},
\end{align*}
which tends to $0$ as $n\to\infty$ by the fact that $\rho<\infty$, \eqref{E:SFAP1_C2}, and the Dominated Convergence Theorem.
\end{proof}

Having established \eqref{E:SFAP1_C1}--\eqref{E:SFAP1_C3}, we may now for sufficiently large $n$ and $T$ apply Proposition~\ref{P:Stability}. This gives true solutions $v_n$ such that after a time translation $v_n(0,x)=\varphi_n(x)$.

It remains to establish approximation by $C_c^\infty(\R\times\R^2)$ functions. Given $\varepsilon>0$ we may find $\psi\in C_c^\infty(\R\times\R^2)$ such that
\[
\|\psi-e^{it\Delta}\varphi\|_{X(\R)}<\frac{\varepsilon}{2}.
\]
By construction, the $v_n$ satisfy
\[
\lim_{T\to\infty}\limsup_{n\to\infty}\|v_n(\cdot-t_n)-\tilde{v}_{n,T}(\cdot)\|_{X(\R)}= 0,
\]
so that by the triangle inequality it suffices to show that
\[
\lim_{T\to\infty}\limsup_{n\to\infty}\|\tilde{v}_{n,T}-e^{it\Delta}\varphi(x-x_n)\|_{X(\R)}=0.
\]
On the region $|t|<T$ we estimate
\[
\|\tilde{v}_{n,T}-e^{it\Delta}\varphi(x-x_n)\|_{X([-T,T])}=\|(\chi_n-1)e^{it\Delta}\varphi\|_{X([-T,T])}\to 0\qtq{as}n\to\infty,
\]
by the Dominated Convergence Theorem. For $t>T$ we instead estimate by \eqref{E:SobforX} on $(T,\infty)$
\begin{align*}
\|\tilde{v}_{n,T}-e^{it\Delta}\varphi(x-x_n)\|_{X}&\lesssim \||\nabla|^{s(p)}[\tilde{v}_{n,T}(T)-e^{iT\Delta}\varphi(x-x_n)]\|_{L_t^qL_x^\frac{2q}{q-2}}\\
&\lesssim \|(\chi_n-1)e^{iT\Delta}\varphi\|_{H^1}\to 0\qtq{as}n\to\infty.
\end{align*}
Finally, the region $t<-T$ is similar, establishing the result.
\end{proof}

\section{Scattering for Far Away Profiles: $p>2$}\label{S:SFAP2}

In this section we extend Proposition~\ref{P:SFAP1} to the case for $p>2$. In this regime $a$ need not decay at infinity. In fact depending on which direction one travels away from the origin, $a$ could take on different values. Thus the behavior of solutions living far from the origin is not so clear. By enforcing the map $\tilde{a}:\mathbb{S}^1\to \R$ defined by \eqref{E:ATilde} be continuous, we guarantee that traveling in similar directions gives similar limiting behavior of $a$, which partially resolves this issue. However, it is still not immediately clear how one can effectively localize around some fixed direction. Indeed, the resolution of this issue is the chief novelty of this paper. 

The approach taken here is to cut off to a family of triangular regions. By translating and shrinking the angular widths of these regions in a suitable way, we are able to ensure these cutoffs capture the behavior of $a$ in a single direction. We will also ensure these function have nice pointwise convergence properties and that their derivatives have nice support properties. After constructing this family of cutoff functions, the proof follows as in Proposition~\ref{P:SFAP1}.

The following lemma carries out the details of the construction.

\begin{lemma}\label{L:Cutoffs}
Let $p>2$ and let $a$ be admissible as in Definition~\ref{D:ADef} and recall the definition of $\tilde{a}$, \eqref{E:ATilde}. Let $\{(r_n\cos(\theta_n),r_n\sin(\theta_n))\}\subset \R^2$ satisfy $r_n\to \infty$, $\theta_n\to \theta_\infty$. Then there exist smooth functions $\chi_n$ such that for $n$ sufficiently large the following three properties hold:
\begin{equation}\label{E:SFAP2L_C1}
\chi_n\to 1\qtq{pointwise as}n\to\infty,
\end{equation}
\begin{equation}\label{E:SFAP2L_C2}
\|a(x)-\tilde{a}(\theta_\infty)\|_{L^\infty(\supp(\chi_n(\cdot-x_n)))}\to 0\qtq{as}n\to\infty,
\end{equation}
\begin{equation}\label{E:SFAP2L_C3}
\|\nabla\chi_n\|_{L^\frac{2\beta}{2-\beta}(\R^2)}\lesssim r_n^{-\frac{1}{p+2}}\qtq{and}\|\Delta\chi_n\|_{L^\frac{2\beta}{2-\beta}(\R^2)}\lesssim r_n^{-1},
\end{equation}
where $\beta$ is as in \eqref{E:ExpDefab}.
\end{lemma}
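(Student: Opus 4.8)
The plan is to build $\chi_n$ as the product of two cutoffs: a radial cutoff excising a ball of radius comparable to $r_n$ around the origin (as in Proposition~\ref{P:SFAP1}), and an angular cutoff that localizes to a sector of half-width $\delta_n$ about the direction $\theta_\infty$, where $\delta_n\to 0$ is chosen slowly. Concretely, fix a smooth bump $\eta$ with $\eta=1$ on $[-1/2,1/2]$ and $\eta=0$ outside $(-1,1)$, and set
\[
\chi_n(x)=\Bigl(1-\eta\bigl(\tfrac{2|x+x_n|}{r_n}\bigr)\Bigr)\,\eta\Bigl(\tfrac{\mathrm{dist}_{\mathbb{S}^1}(x/|x|,\theta_\infty)}{\delta_n}\Bigr),
\]
suitably smoothed near the origin so the angular factor makes sense (e.g.\ multiply the angular factor itself by the same radial cutoff, or only define $\chi_n$ where $|x|\gtrsim r_n$ and extend by $0$). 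The free parameter $\delta_n$ will be pinned down at the end; all that matters is $\delta_n\to 0$ and $\delta_n r_n\to\infty$, say $\delta_n=r_n^{-1/2}$.

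First I would verify \eqref{E:SFAP2L_C1}: for fixed $x$, once $n$ is large the point $x$ lies outside the shrinking ball (since the ball is centered at $-x_n$, i.e.\ the translated object $\chi_n(\cdot-x_n)$ lives near $x_n$ — one should be careful which object converges pointwise; the statement is that $\chi_n\to1$, and on the support of $\chi_n$ we have $|x|\sim r_n\to\infty$, so pointwise convergence here is the convergence relevant after the translation is undone, exactly as in the $p\le 2$ case). Then \eqref{E:SFAP2L_C2}: on $\mathrm{supp}(\chi_n(\cdot-x_n))$ one has $|x|\gtrsim r_n\to\infty$ and the angle of $x$ is within $O(\delta_n)+o(1)$ of $\theta_\infty$ (the $o(1)$ from $\theta_n\to\theta_\infty$ and from the sector being centered at $x_n$ whose angle is $\theta_n$); since $a(x)\to\tilde a(\theta_\infty)$ as $|x|\to\infty$ with the angle held near $\theta_\infty$ — which follows from the definition \eqref{E:ATilde} of $\tilde a$ together with the \emph{continuity} of $\tilde a$ and a standard uniformity argument (the limit in \eqref{E:ATilde} is locally uniform in $\theta$ because $a$ is Lipschitz, having $\nabla a\in L^\infty$, combined with repulsivity; radial monotonicity from $x\cdot\nabla a\le0$ gives that $r\mapsto a(r\omega)$ is nonincreasing, so pointwise convergence upgrades to locally uniform by Dini) — the sup tends to $0$.

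The main work, and the main obstacle, is \eqref{E:SFAP2L_C3}: the derivative bounds. The radial part contributes $|\nabla|\lesssim r_n^{-1}$ supported on an annulus of radius and thickness $\sim r_n$, hence area $\sim r_n^2$, so its $L^{2\beta/(2-\beta)}$ norm is $\lesssim r_n^{-1}\cdot r_n^{2(2-\beta)/(2\beta)}=r_n^{2/\beta-2}$ — one checks from \eqref{E:ExpDefab} that for $p>2$ this exponent is $\le -1$, giving the stated $r_n^{-1}$ for $\Delta\chi_n$ and comfortably beating $r_n^{-1/(p+2)}$ for $\nabla\chi_n$. The angular part is the delicate one: $|\nabla(\text{angular cutoff})|\lesssim (\delta_n|x|)^{-1}\sim(\delta_n r_n)^{-1}$ on its support, which is a "wedge" $\{|x|\sim r_n,\ \text{angle}\sim\delta_n\}$ of area $\sim \delta_n r_n^2$; its $L^{2\beta/(2-\beta)}$ norm is thus $\lesssim (\delta_n r_n)^{-1}(\delta_n r_n^2)^{(2-\beta)/(2\beta)}=\delta_n^{-1+(2-\beta)/(2\beta)}\,r_n^{-1+ (2-\beta)/\beta} = \delta_n^{(2-3\beta)/(2\beta)} r_n^{(2-2\beta)/\beta}$. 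Since $2-3\beta<0$ (as $\beta>2/3$ for $p>2$ by \eqref{E:ExpDefab}), the factor $\delta_n^{(2-3\beta)/(2\beta)}$ \emph{grows}; so I must choose $\delta_n\to0$ slowly enough that this growth is dominated. Balancing against the required $r_n^{-1/(p+2)}$ and the spare room $r_n^{(2-2\beta)/\beta}$ one solves a single linear inequality for the allowed range of exponents $\gamma$ in $\delta_n=r_n^{-\gamma}$; I expect a nonempty interval (roughly $0<\gamma$ small), and any such choice works — the mixed second derivatives and the $\Delta$-estimate are handled identically with an extra power of $(\delta_n r_n)^{-1}$, which is again absorbed. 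Once the cutoffs are constructed with these three properties, the lemma is proved; the subsequent extension of Proposition~\ref{P:SFAP1} to $p>2$ then runs verbatim with $a$ replaced by the constant $\tilde a(\theta_\infty)$ on the support, using \eqref{E:SFAP2L_C2} to control the discrepancy.
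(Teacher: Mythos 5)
Your overall strategy (localize to a shrinking angular sector around the limiting direction, with the width going to zero slowly enough that the derivative bounds in \eqref{E:SFAP2L_C3} survive) is the same idea the paper implements with its nested triangles, but as written the proposal has a genuine gap at exactly the point where the lemma is delicate: the choice of the angular width $\delta_n$. Your sector is centered at $\theta_\infty$ while the cutoff is deployed at $x_n$, whose direction is $\theta_n$, and the hypotheses give \emph{no rate} for $\theta_n\to\theta_\infty$. If you insist on a pure power law $\delta_n=r_n^{-\gamma}$ (your plan is to ``pin down $\gamma$ at the end'' by solving a linear inequality coming only from \eqref{E:SFAP2L_C3}), then whenever $|\theta_n-\theta_\infty|$ decays more slowly than $r_n^{-\gamma}$ (say $|\theta_n-\theta_\infty|=(\log r_n)^{-1}$), for any fixed $x$ the direction of $x+x_n$ differs from $\theta_n$ only by $O(|x|/r_n)$, hence stays at distance $\approx|\theta_n-\theta_\infty|\gg\delta_n$ from $\theta_\infty$; the angular factor then vanishes and $\chi_n(x)=0$ for all large $n$, so \eqref{E:SFAP2L_C1} fails. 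Your verification of \eqref{E:SFAP2L_C1} discusses only the radial factor and never checks the angular one, which is precisely the new ingredient relative to the $p\le2$ case. The fix is to slave the width to both quantities, e.g. $\delta_n=\max\bigl(r_n^{-\gamma},\,C|\theta_n-\theta_\infty|\bigr)$, or equivalently to center the sector at $\theta_n$ and recover \eqref{E:SFAP2L_C2} from continuity of $\tilde a$; this is exactly the paper's choice $\omega_n=\max\bigl(17|\theta_n|,\sin^{-1}(r_n^{-1/(p+2)})\bigr)$, introduced there with the explicit remark that there need be no relation between the growth of $r_n$ and the rate of convergence of $\theta_n$.

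There are also secondary, fixable defects. First, the displayed formula mixes frames: the radial factor uses $|x+x_n|$ but the angular factor uses $x/|x|$; taken literally, for any fixed $x$ off the ray through $\theta_\infty$ the angular factor is $0$ once $\delta_n$ is small, so \eqref{E:SFAP2L_C1} fails even for well-behaved $\theta_n$ --- the angular variable must be that of $x+x_n$. Second, with the angular cutoff imposing no upper bound on the radius, $\supp(\nabla\chi_n)$ is an unbounded wedge, not a region of area $\sim\delta_n r_n^2$; the estimate still goes through because the integrand decays like $|x|^{-m}$ with $m=\tfrac{2\beta}{2-\beta}=\tfrac{2(p+2)}{p}>2$, so the integral is dominated by $|x|\sim r_n$ and reproduces your claimed bound, but this needs to be argued (or the sector truncated at radius $\sim r_n$, as the paper's bounded triangle $T_{2,n}$ does automatically). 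Third, the arithmetic claim that $2/\beta-2\le-1$ is false: $2/\beta-2=-2/(p+2)$; the radial-factor contributions are nonetheless acceptable, being $\lesssim r_n^{-2/(p+2)}$ for the gradient and $\lesssim r_n^{-(p+4)/(p+2)}\le r_n^{-1}$ for the Laplacian. With the width chosen as above and these repairs, your radial-times-angular construction does prove the lemma, differing from the paper only in using a sector product with two transition scales rather than the paper's nested triangles with the single transition scale $r_n\sin(\omega_n)\gtrsim r_n^{(p+1)/(p+2)}$.
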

\begin{proof}
By rotating our coordinate system about $\theta_\infty$ we may assume that $\theta_\infty=0$. Now, we define a sequence of auxiliary angles $\omega_n\in \mathbb{S}^1$ such that

\begin{equation}\label{E:SFAP2L_P1}
\omega_n\to0
\end{equation}
and for $n$ sufficiently large one has
\begin{equation}\label{E:SFAP2L_P2}
\quad \sin(\omega_n)\geq r_n^{-\frac{1}{p+2}}
\end{equation}
and
\begin{equation}\label{E:SFAP2L_P3}
\left|\frac{\sin(\theta_n)}{\sin(\omega_n)}\right|\leq \frac{1}{16}.
\end{equation}
For example, we could take 
\[
\omega_n=\max\left(17|\theta_n|,\sin^{-1}\Big(r_n^{-\frac{1}{p+2}}\Big)\right).
\]
We introduce this family as there need be no relation between the growth of $r_n$ and the rate of convergence of the $\theta_n$. The $\omega_n$ converge slowly enough to account for both of these potentially distinct behaviors.

Next, we define two families of triangular regions in the plane by

\[
T_{1,n}=\{(x,y)\subset \R^2: r_n\geq x-\frac{r_n}{2}\geq \cot(\omega_n)|y|\},
\]
and
\[
T_{2,n}=\{(x,y):\frac{r_n(5+\sin(\omega_n))}{4}\geq x-\frac{r_n}{4}\geq \cot(\omega_n)|y|\}.
\]

\begin{figure}[h]
\centering
\begin{tikzpicture}[scale=2.5, >=stealth]

\draw[->] (-0.2,0) -- (3,0) node[right] {$x$};
\draw[->] (0,-1.2) -- (0,1.2) node[above] {$y$};

\def\rn{1.8}
\def\w{13} 
\pgfmathsetmacro{\lentwo}{1.25*\rn + \rn*sin(\w)/4}

\coordinate (A1) at ({\rn/2},0);
\coordinate (B1) at ({3*\rn/2}, {tan(\w)*\rn});
\coordinate (C1) at ({3*\rn/2},{-tan(\w)*\rn});
\coordinate (D1) at ({3*\rn/2},0);

\coordinate (A2) at ({\rn/4},0);
\coordinate (C2) at ({\rn/4+\lentwo}, {tan(\w)*\lentwo});
\coordinate (B2) at ({\rn/4+\lentwo}, -{tan(\w)*\lentwo});
\coordinate (D2) at ({\rn/4+\lentwo},0);

\fill[pattern=north east lines, pattern color=black!60] (A1) -- (B1) -- (C1) -- cycle;

\draw[thick, black] (A1) -- (B1) -- (C1) -- cycle node[midway, above, xshift=0.3cm, font=\LARGE] {$T_{1,n}$};
\draw[thick, black, dashed] (A2) -- (B2) -- (C2) -- cycle node[midway, above right, xshift=-0.3cm, font=\LARGE] {$T_{2,n}$};

\draw pic["$2\omega_n$", draw=black, angle radius=1cm, angle eccentricity=1.3, pic text options={xshift=0.2cm}, font=\LARGE] {angle = C1--A1--B1};

\node[below , yshift=-0.3cm, font=\large] at (A1) {$(\frac{r_n}{2},0)$};

\node[above left, yshift=0.1cm, xshift=0.1cm, font=\large] at (A2) {$(\frac{r_n}{4},0)$};

\node[xshift=-.6cm, yshift=0.3cm, font=\large] at (D1) {$(\frac{3r_n}{2},0)$};

\node[xshift=1.6cm, yshift=-0.3cm, font=\large] at (D2) {$(\frac{3r_n}{2}+\frac{r_n\sin(\omega_n)}{4},0)$};

\fill (A1) circle (0.5pt);
\fill (A2) circle (0.5pt);
\fill (B1) circle (0.5pt);
\fill (B2) circle (0.5pt);
\fill (C1) circle (0.5pt);
\fill (C2) circle (0.5pt);
\fill (D1) circle (0.5pt);
\fill (D2) circle (0.5pt);

\end{tikzpicture}
\caption{The two triangular regions $T_{1,n}$ and $T_{2,n}$}
\label{fig:placeholder}
\end{figure}
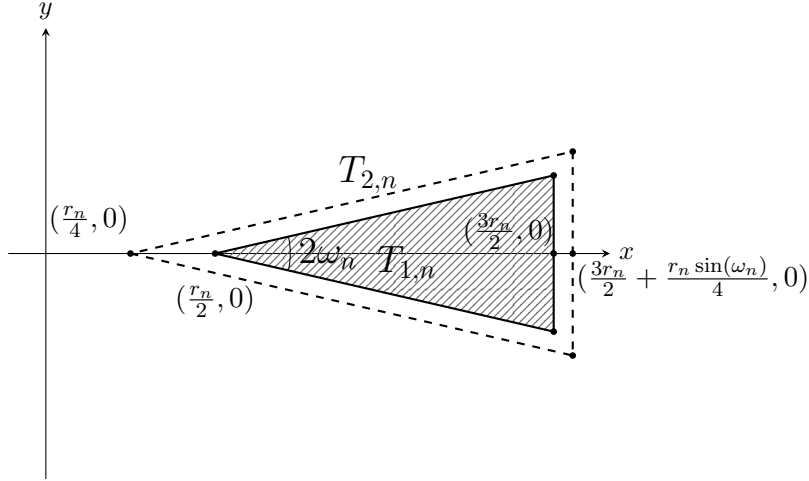
One can verify that by \eqref{E:SFAP2L_P2} that if $d=\text{dist}(T_{1,n},\R^2\setminus T_{2,n})$, then 
\[
d=\frac{r_n\sin(\omega_n)}{4}\geq \frac{r_n^\frac{p+1}{p+2}}{4}.
\]

We will then define the $\chi_n$ as smooth functions that satisfy
\[
\chi_n(x)=\begin{cases}
    1 & x+x_n\in T_{1,n}\\
    0 & x+x_n\not\in T_{2,n}
\end{cases}
\]
that additionally satisfy the bounds $|\partial^{\kappa}\chi_n|\lesssim r_n^{-|\kappa|\frac{p+1}{p+2}}$ for multiindicies $\alpha$, which is compatible with the value of $d$. We now verify \eqref{E:SFAP2L_C1}-\eqref{E:SFAP2L_C3}
\begin{proof}[Proof of \eqref{E:SFAP2L_C1}]
For $(x,y)\in \R^2$ we can guarantee that $\chi_n(x,y)=1$, provided that $(x+r_n\cos(\theta_n),y+r_n\sin(\theta_n))\in T_{1,n}$. Thus we must show that for $n$ sufficiently large the following two inequalities hold
\begin{equation}\label{E:TriangleEq1}
x+r_n\left(\cos(\theta_n)-\frac{1}{2}\right)\leq r_n
\end{equation}
and
\begin{equation}\label{E:TriangleEq2}
\cot(\omega_n)|y+r_n\sin(\theta_n)|\leq x+r_n\left(\cos(\theta_n)-\frac{1}{2}\right). 
\end{equation}
For \eqref{E:TriangleEq1} choosing $n$ large enough so that $r_n\geq2|x|$ one has
\[
x+r_n\left(\cos(\theta_n)-\frac{1}{2}\right)\leq|x|+\frac{r_n}{2}\leq r_n.
\]
To show \eqref{E:TriangleEq2} we see that choosing $n$ large enough so that $r_n\geq 8|x|$ and, recalling that $\theta_n\to 0$, additionally choosing $n$ large enough so that $\cos(\theta_n)>\frac{3}{4}$ one has
\[
x+r_n\left(\cos(\theta_n)-\frac{1}{2}\right)\geq -\frac{r_n}{8}+\frac{r_n}{4}=\frac{r_n}{8}.
\]
Similarly, choosing $n$ large enough so that $r_n\geq 16|y|^{\frac{p+2}{p+1}}$ one has by \eqref{E:SFAP2L_P2} and \eqref{E:SFAP2L_P3} that
\begin{align*}
\cot(\omega_n)|y+r_n\sin(\theta_n)|&\leq r_n^{\frac{1}{p+2}}|y|+r_n\left|\frac{\sin(\theta_n)}{\sin(\omega_n)}\right|\\
&\leq \frac{r_n}{16}+\frac{r_n}{16}=\frac{r_n}{8}.
\end{align*}
Together, the preceding two inequalities yield \eqref{E:TriangleEq2} establishing \eqref{E:SFAP2L_C1}.
\end{proof}
\begin{proof}[Proof of \eqref{E:SFAP2L_C2}]
For $\varepsilon>0$ we define $r_\varepsilon:\mathbb{S}^1\to [0,\infty)$ by 
\[
r_\varepsilon(\theta)=\inf\{r>0:|a(\rho\cos(\theta),\rho\sin(\theta))-\tilde{a}(\theta)|<\varepsilon\qtq{for all} \rho\geq r\}.
\]
Such a map exists by the repulsivity condition on $a$. Further, continuity of $\tilde{a}$ and uniform continuity of $a$ guarantee that $r_\varepsilon$ is continuous. Let $R=\max_{\theta\in\mathbb{S}^1}(r_\varepsilon(\theta))$. Now choose $n$ sufficiently large so that $r_n\geq R$ and, via another application of continuity of $\tilde{a}$ and \eqref{E:SFAP2L_P1}, so that 
\[
\sup_{\theta\in (-\omega_n,\omega_n)}|\tilde{a}(\theta)-\tilde{a}(0)|<\varepsilon.
\]
With these in place we see that for any point 
\[
z\in (r_n,\infty)\times(-\omega_n,\omega_n)\subset (0,\infty)\times \mathbb{S}^1
\]
an application of the triangle inequality shows that
\[
|a(z)-\tilde{a}(0)|<2\varepsilon.
\]
Finally, noting that
\[
\supp(\chi_n(\cdot-x_n))\subset T_{2,n}\subset (r_n,\infty)\times (-\omega_n,\omega_n),
\]
the claim follows.
\end{proof}
\begin{proof}[Proof of \eqref{E:SFAP2L_C3}]
Observe that
\[
\supp(\nabla \chi_n)\subset T_{2,n}\setminus T_{1,n},
\]
and likewise for $\supp(\Delta\chi_n)$. In particular, noting that 
\[
|T_{2,n}\setminus T_{1,n}|\lesssim r_n^2
\]
and recalling the symbol bounds $|\partial^\kappa\chi_n|\lesssim r_n^{-|\kappa|\frac{p+1}{p+2}}$,  H\"older's inequality yields the result.
\end{proof}
This completes the proof of the lemma.
\end{proof}

With this lemma in place we can establish the analogous result to Proposition~\ref{P:SFAP1} in the case that $p>2$. Because the details are quite similar to the previous case, we will only sketch the arguments.

\begin{proposition}[Scattering for far-away profiles: $p>2$]\label{P:SFAP2}
Let $p>2$ and let $a$ be admissible as in Definition~\ref{D:ADef}. Let $\varphi\in H^1$. Let $\{t_n\}\subset \R$ be such that $t_n\equiv0$ or $|t_n|\to\infty$ and let $\{(r_n\cos(\theta_n),r_n\sin(\theta_n)\}\subset \R^2$ satisfy $r_n\to \infty$ and $\theta_n\to \theta_\infty$. Then for $n$ sufficiently large there exists a global solution, $v_n$, to \eqref{nls} with
\[
v_n(0,x)=\varphi_n(x)=e^{it_n\Delta}\varphi(x-x_n)
\]
that obeys the bound
\[
\|v_n\|_{X(\R)}\lesssim_{\|\varphi\|_{H^1}}1.
\]
In particular, $v_n$ scatters both forward and backward in time. Furthermore, for any $\varepsilon>0$, there exists $N$ and $\psi\in C_c^\infty(\R\times\R^2)$ such that for $n\geq N$,
\[
\|v_n-\psi(\cdot+t_n,\cdot-x_n)\|_{X(\R)}<\varepsilon.
\]
\end{proposition}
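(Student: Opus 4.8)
The plan is to run the proof of Proposition~\ref{P:SFAP1} essentially verbatim, with two substitutions: the free flow $e^{it\Delta}\varphi$ (a good model only when $a$ decays) is replaced by a solution $w$ of the \emph{limiting constant-coefficient equation}, and the crude cutoffs used there are replaced by the triangular cutoffs $\chi_n$ from Lemma~\ref{L:Cutoffs}. Set $\mu=\tilde a(\theta_\infty)\ge 0$. For $\mu>0$ the constant weight $\mu$ is admissible and $i\partial_t w+\Delta w=\mu|w|^pw$ is a defocusing intercritical NLS on $\R^2$ for $p>2$, so by the classical theory (\cite{GV,N}) it is globally well-posed, has wave operators, and all its solutions scatter in $H^1$ forward and backward in time with finite $X(\R)$-norm; for $\mu=0$ these facts are trivial. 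I fix $w$ to be the solution of $i\partial_t w+\Delta w=\mu|w|^pw$ determined by $w(0)=\varphi$ if $t_n\equiv0$, by $\lim_{t\to+\infty}\|w(t)-e^{it\Delta}\varphi\|_{H^1}=0$ if $t_n\to+\infty$, and by $\lim_{t\to-\infty}\|w(t)-e^{it\Delta}\varphi\|_{H^1}=0$ if $t_n\to-\infty$; in all cases $\|w\|_{X(\R)}\lesssim_{\|\varphi\|_{H^1}}1$ and $\|w\|_{L_t^\infty H_x^1}\lesssim\|\varphi\|_{H^1}$. With $\chi_n$ from Lemma~\ref{L:Cutoffs}, I define, for $T\ge1$, approximate solutions $\tilde v_{n,T}(t,x)=\chi_n(x-x_n)\,w(t,x-x_n)$ for $|t|\le T$ and by free evolution of $\tilde v_{n,T}(\pm T)$ for $|t|>T$, exactly as before, and set $e_{n,T}=(i\partial_t+\Delta)\tilde v_{n,T}-a|\tilde v_{n,T}|^p\tilde v_{n,T}$.

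I would then verify the three analogues of \eqref{E:SFAP1_C1}--\eqref{E:SFAP1_C3}. The data-matching claim uses $\chi_n\to1$ pointwise \eqref{E:SFAP2L_C1} and dominated convergence, together with the choice of $w$: when $|t_n|\to\infty$, applying a unitary reduces the difference to $\|\chi_n w(\pm T)-e^{iT\Delta}\varphi\|_{H^1}\le\|(\chi_n-1)w(\pm T)\|_{H^1}+\|w(\pm T)-e^{\pm iT\Delta}\varphi\|_{H^1}$, which vanishes in the iterated limit (first $n\to\infty$, then $T\to\infty$). The $X(\R)$-bound is immediate on $[-T,T]$, where $\|\tilde v_{n,T}\|_{X([-T,T])}\lesssim\|w\|_{X(\R)}$, and on $\{|t|>T\}$ it follows from Strichartz and \eqref{E:SobforX} once one bounds $\|\chi_n w(\pm T)\|_{H^1}\lesssim\|w\|_{L_t^\infty H_x^1}+\|\nabla\chi_n\|_{L^\infty}\|w\|_{L_t^\infty L_x^2}\lesssim\|\varphi\|_{H^1}$ via the symbol bounds on $\chi_n$. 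On $\{|t|>T\}$ the error is $-a|\tilde v_{n,T}|^p\tilde v_{n,T}$; by \eqref{E:NonLinEst} with $\rho=\infty$ it is bounded by $\|a\|_{L_x^\infty}\|\tilde v_{n,T}\|_{X(|t|>T)}^{p+1}$, and $\lim_{T\to\infty}\limsup_{n\to\infty}\|\tilde v_{n,T}\|_{X(|t|>T)}=0$ follows, as in Proposition~\ref{P:SFAP1}, from $\chi_n w(\pm T)\to w(\pm T)$ in $H^1$ and the fact that $w$ scatters (so $\|e^{it\Delta}w(\pm T)\|_{X(|t|>T)}\to0$ as $T\to\infty$).

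The genuinely new work is the error on $[-T,T]$, namely
\[
e_{n,T}=\Delta\chi_n(\cdot-x_n)\,w(t,\cdot-x_n)+2\nabla\chi_n(\cdot-x_n)\cdot\nabla w(t,\cdot-x_n)+\chi_n(\cdot-x_n)\bigl(\mu-a(\cdot)\chi_n^p(\cdot-x_n)\bigr)|w|^pw(t,\cdot-x_n).
\]
The first two terms are treated exactly as \eqref{E:SFAP1_error1}--\eqref{E:SFAP1_error2}: H\"older in time costs $|T|^{1/\alpha}$, H\"older in space costs $\|\Delta\chi_n\|_{L^{2\beta/(2-\beta)}}\lesssim r_n^{-1}$ or $\|\nabla\chi_n\|_{L^{2\beta/(2-\beta)}}\lesssim r_n^{-1/(p+2)}$ by \eqref{E:SFAP2L_C3}, and the remaining factors $\|w\|_{L_t^\infty L_x^2}$, $\|\nabla w\|_{L_t^\infty L_x^2}$ are finite, so these vanish as $n\to\infty$ for each fixed $T$. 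For the last term I split $\R^2$ according to where $\chi_n(\cdot-x_n)=1$: on that set the coefficient is $\mu-a$, so \eqref{E:NonLinEst} with $\rho=\infty$ bounds the contribution by $\|\mu-a\|_{L^\infty(\supp\chi_n(\cdot-x_n))}\|w\|_{X(\R)}^{p+1}\to0$ by \eqref{E:SFAP2L_C2}; on the complement within $\supp\chi_n$ one has $0<\chi_n(\cdot-x_n)<1$, so the coefficient is merely $O(1)$, but its support lies in $\supp\nabla\chi_n(\cdot-x_n)$, which — since $x_n\in T_{1,n}$ at distance $\gtrsim r_n\sin(\omega_n)\to\infty$ from $\partial T_{1,n}$, by the construction in Lemma~\ref{L:Cutoffs} — is contained in $\{|x|\ge R_n\}$ with $R_n\to\infty$; after translating, this contribution is $\lesssim\|a\|_{L_x^\infty}\|\mathbf 1_{\{|x|\ge R_n\}}|w|^pw\|_{Y(\R)}$, which tends to $0$ since $|w|^pw\in Y(\R)$ and $R_n\to\infty$ (dominated convergence). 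This establishes the third claim, and I expect this last term to be the main obstacle: it is vacuous in the $p\le2$ case (where $a$ decays, so the whole nonlinearity is small far out), and it is exactly why the two-scale triangular cutoffs of Lemma~\ref{L:Cutoffs} are needed — one must have $a$ nearly constant on the bulk of $\supp\chi_n$ while $\supp\nabla\chi_n$ escapes to spatial infinity and, at the same time, $\nabla\chi_n$ and $\Delta\chi_n$ decay fast enough to absorb the $|T|^{1/\alpha}$ loss.

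With the three claims in hand, Proposition~\ref{P:Stability} yields, for $n$ and $T$ large, a true solution of \eqref{nls} equal to $\varphi_n$ at time $t_n$; translating in $t$ (permissible since \eqref{nls} is autonomous) produces $v_n$ with $v_n(0)=\varphi_n$ and $\|v_n\|_{X(\R)}\lesssim\|\tilde v_{n,T}\|_{X(\R)}+o_n(1)\lesssim_{\|\varphi\|_{H^1}}1$, whence scattering by Proposition~\ref{P:WellPosed}(iii). For the $C_c^\infty$-approximation I would, exactly as in Proposition~\ref{P:SFAP1}, pick $\psi\in C_c^\infty(\R\times\R^2)$ with $\|\psi-w\|_{X(\R)}<\varepsilon/2$ (possible since $w\in X(\R)$) and reduce, via the stability bound and the triangle inequality, to showing $\lim_{T\to\infty}\limsup_{n\to\infty}\|\tilde v_{n,T}-w(t,\cdot-x_n)\|_{X(\R)}=0$; on $\{|t|\le T\}$ this difference equals $\|(\chi_n-1)w\|_{X([-T,T])}\to0$ by dominated convergence, and on $\{|t|>T\}$ it is controlled by $\|(\chi_n-1)w(\pm T)\|_{H^1}\to0$ together with $\|w(t)-e^{i(t\mp T)\Delta}w(\pm T)\|_{X(|t|>T)}\to0$ as $T\to\infty$, again because $w$ scatters.
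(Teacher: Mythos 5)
Your proposal is correct and follows essentially the same route as the paper: the limiting constant-coefficient equation $(i\partial_t+\Delta)w=\tilde a(\theta_\infty)|w|^pw$ solved via Nakanishi's theory, the triangular cutoffs of Lemma~\ref{L:Cutoffs}, the same approximate solutions $\tilde v_{n,T}$, verification of \eqref{E:SFAP1_C1}--\eqref{E:SFAP1_C3}, and stability. The only (inessential) deviation is in the nonlinear coefficient error on $[-T,T]$: the paper splits it as $\chi_n^{p+1}(\tilde a(\theta_\infty)-a)|w|^pw+(\chi_n-\chi_n^{p+1})\tilde a(\theta_\infty)|w|^pw$ and uses \eqref{E:SFAP2L_C2} plus pointwise convergence $\chi_n\to1$ with dominated convergence, whereas you split spatially into $\{\chi_n=1\}$ and the transition region $T_{2,n}\setminus T_{1,n}$, which (as you correctly note) lies at distance $\gtrsim r_n\sin\omega_n\to\infty$ from $x_n$, so dominated convergence applies there as well --- the two treatments are equivalent.
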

\begin{proof}
Recall the definition of $\tilde{a}$ from Definition~\ref{D:ADef}. Because the $\theta_n\to \theta_\infty$ we expect that the appropriate limiting model in this scenario should be the defoucsing power-type NLS
\begin{equation}\label{nls2}
(i\partial_t+\Delta)u=\tilde{a}(\theta_\infty)|u|^pu.
\end{equation}
(In paticular if $\tilde{a}(\theta_\infty)=0$ it should be the free Schr\"odinger equation as in Proposition~\ref{P:SFAP1}).

If $t_n\equiv0$ we define $w(t,x)$ as the solution to \eqref{nls2} with initial data $w(0,x)=\varphi(x)$. If instead $t_n\to \infty$ (resp. $t_n\to-\infty)$ we define $w(t,x)$ as the solution to \eqref{nls2} that scatters to $\varphi$ as $t\to\infty$ (resp. $t\to -\infty$). From Nakanishi's work on the $1d$ and $2d$ defocusing power-type NLS in \cite{N}, we have that $w$ is global and obeys finite Strichartz norms. In particular by \eqref{E:SobforX} we have that
\[
\|w\|_{X(\R)}\lesssim_{\|\varphi\|_{H^1}}1,
\]
and $w$ scatters in both time directions.

We remark that if $\tilde{a}(\theta_\infty)=0$, then $w(t,x)$ is simply the solution to the free Schr\"odinger equation with initial data $w(0,x)=\varphi(x)$ regardless of behavior of $t_n$.

Let $\chi_n$ be the cutoff functions as in the preceding Lemma~\ref{L:Cutoffs}. Then for fixed $T>1$ we define our approximate solutions for $|t|<T$ by
\[
\tilde{v}_{n,T}(t,x)=\chi_n(x-x_n)w(t,x-x_n),
\]
and for $|t|>T$ we evolve freely so that
\[
\tilde{v}_{n,T}(t,x)=\begin{cases}
e^{i(t-T)\Delta}\tilde{v}_{n,T}(T) & t>T\\
e^{i(t+T)\Delta}\tilde{v}_{n,T}(-T) & t<T.
\end{cases}
\]
We are again are tasked with establishing \eqref{E:SFAP1_C1}-\eqref{E:SFAP1_C3} for the $\tilde{v}_{n,T}$. 

For \eqref{E:SFAP1_C1} the proof is identical in the case that the $t_n\equiv0$. In the case $t_n\to \infty$ then we estimate for $t_n>T$:
\begin{align*}
\|\tilde{v}_{n,T}(t_n)-\varphi_n(t_n)\|_{H^1}&\lesssim \|\chi_nw(T)-e^{iT\Delta}\varphi\|_{H^1}\\
&\lesssim\|(\chi_n-1)w(T)\|_{H^1}+\|w(T)-e^{iT\Delta}\varphi\|_{H^1},
\end{align*}
and the above tends to $0$ as $n,T\to\infty$ by \eqref{E:SFAP2L_C1}, $w(t)$ scattering to $\varphi$, and the Dominated Convergence Theorem.
The case of $t_n\to-\infty$ is similar.

The proof of \eqref{E:SFAP1_C2} is treated as in Proposition~\ref{P:SFAP1} using the $\|w\|_{X(\R)}$ bound previously discussed.

This leaves just \eqref{E:SFAP1_C3} which is more involved.

\begin{proof}[Proof of \eqref{E:SFAP1_C3}]
As in Proposition~\ref{P:SFAP1} we split into regions $|t|>T$ and $|t|\leq T$ with the region $|t|>T$ treated as in Proposition~\ref{P:SFAP1}. Hence, we focus on $|t|\leq T$ where the error
\[
e_{n,T}=(i\partial_t+\Delta)\tilde{v}_{n,T}-a(x)|\tilde{v}_{n,T}|^p\tilde{v}_{n,T}
\]
has the formula
\begin{align}
e_{n,T}&=\Delta\chi_n(x-x_n)w(t,x-x_n)\label{E:SFAP2_error1}\\
&+\nabla\chi_n(x-x_n)\cdot \nabla w(t,x-x_n)\label{E:SFAP2_error2}\\
&+\chi_n(x-x_n)\tilde{a}(\theta_\infty)|w(t,x-x_n)|^pw(t,x-x_n)\label{E:SFAP2_error3}\\
&-a(x)|\tilde{v}_{n,T}(t,x)|^p\tilde{v}_{n,T}(t,x)\label{E:SFAP2_error4}
\end{align}

The treatment of terms \eqref{E:SFAP2_error1} and \eqref{E:SFAP2_error2} follows as in Proposition~\ref{P:SFAP1} using \eqref{E:SFAP2L_C3}, so it remains to treat \eqref{E:SFAP2_error3} and \eqref{E:SFAP2_error4}. For these terms, we estimate via the triangle inequality and the nonlinear estimate \eqref{E:NonLinEst} on the time interval $[-T,T]$:
\begin{align*}
\|\eqref{E:SFAP2_error3}+\eqref{E:SFAP2_error4}\|_{Y}&\lesssim \|\chi_n^{p+1}(x-x_n)(\tilde{a}(\theta_\infty)-a(x))\|_{L^\infty}\|w\|_{X}^{p+1}\\
&\quad+\|(\chi_n-\chi_n^{p+1})\tilde{a}(\theta_\infty)w\|_{X}\|w\|_{X}^p\\
&\lesssim \|a(x)-\tilde{a}(\theta_\infty)\|_{L_x^\infty(\supp(\chi_n(x-x_n)))}\|w\|_{X}^{p+1}\\
&\quad+\|(\chi_n-\chi_n^{p+1})\tilde{a}(\theta_\infty)w\|_{X}\|w\|_{X}^p.
\end{align*}
The first summand vanishes as $n\to\infty$ by \eqref{E:SFAP2L_C2} with \eqref{E:SFAP1_C2} and the second summand vanishes as $n\to\infty$ by \eqref{E:SFAP2L_C1}, \eqref{E:SFAP1_C2}, and the Dominated Convergence Theorem.
\end{proof}

Having established \eqref{E:SFAP1_C1}-\eqref{E:SFAP1_C3} we apply stability to construct $v_n$ and prove approximation by $C_c^\infty(\R\times\R^2)$ functions as in Proposition~\ref{P:SFAP1}.
\end{proof}

\section{Reduction to Compact Solutions}\label{S:Reduction}

In this section we carry out the construction of a compact solution to \eqref{nls}, under the assumption that Theorem~\ref{T} were to fail. In this way the proof of Theorem~\ref{T} is reduced to precluding the existence of such solutions.

First, we define an increasing function
\[
L:[0,\infty)\to [0,\infty]\qtq{by} L(E)=\sup\|u\|_{X(\R)},
\]
where the supremum is taken over all solutions to \eqref{nls} such that $M(u)+E(u)< E$. By the small-data theory of Proposition~\ref{P:WellPosed}, we know that if $\eta_0$ is the small-data threshold then one has
\[
L(E)\lesssim_E1\qtq{for}E<\eta_0.
\]
Setting
\[
E_c=\sup\{E:L(E)<\infty\},
\]
to establish Theorem~\ref{T} it suffices to show that $E_c=\infty$. We will argue by contradiction, assuming that $E_c<\infty$. We first show the following:

\begin{proposition}[Palais-Smale Condition]\label{P:PSmale}
Suppose $E_c<\infty$. Let $u_n$ be a sequence of solutions to \eqref{nls} such that
\begin{equation}\label{E:PS_A1}
M(u_n)+E(u_n)\to E_c.
\end{equation}
Suppose that $\{t_n\}$ is a sequence of times such that
\begin{equation}\label{E:PS_A2}
\lim_{n\to\infty}\|u_n\|_{X(t_n,\infty)}=\lim_{n\to\infty}\|u_n\|_{X(-\infty,t_n)}=\infty.
\end{equation}
Then $\{u_n(t_n)\}$ converges along a subsequence in $H^1$.
\end{proposition}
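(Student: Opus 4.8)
The plan is to run the standard concentration-compactness extraction of a minimal element, using the linear profile decomposition (Proposition~\ref{P:LPD}) together with the scattering theory for far-away profiles (Propositions~\ref{P:SFAP1} and~\ref{P:SFAP2}) and the stability theory (Proposition~\ref{P:Stability}). First I would time-translate so that $t_n=0$, replacing $u_n$ by $u_n(\cdot+t_n)$; then \eqref{E:PS_A2} becomes $\|u_n\|_{X(0,\infty)}=\|u_n\|_{X(-\infty,0)}\to\infty$, and by Proposition~\ref{P:WellPosed}(i) the sequence $f_n:=u_n(0)$ is bounded in $H^1$. Apply Proposition~\ref{P:LPD} to $\{f_n\}$ to get profiles $\varphi^j$, parameters $(t_n^j,x_n^j)$, and remainders $w_n^J$, with the mass/energy decoupling. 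The goal is to show $J^*=1$, that the single profile has $x_n^1\equiv0$ and $t_n^1\equiv0$, and that the remainder vanishes strongly in $H^1$, which yields $f_n\to\varphi^1$ in $H^1$ along a subsequence.

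The key steps, in order: (1) \textbf{Define nonlinear profiles.} For each $j$, if $x_n^j\equiv0$ and $t_n^j\equiv0$, let $v^j$ be the solution to \eqref{nls} with data $\varphi^j$; if $x_n^j\equiv0$ but $|t_n^j|\to\infty$, let $v^j$ be the solution scattering to $e^{it_n^j\Delta}\varphi^j$ (using the final-state problem, Proposition~\ref{P:WellPosed}(ii)); and if $|x_n^j|\to\infty$, let $v_n^j$ be the solutions produced by Proposition~\ref{P:SFAP1} (if $p\le2$) or Proposition~\ref{P:SFAP2} (if $p>2$), which have $\|v_n^j\|_{X(\R)}\lesssim_{\|\varphi^j\|_{H^1}}1$. (2) \textbf{Dichotomy.} Suppose first that there is more than one profile, or one profile that is not of the ``$x_n\equiv0$, $t_n\equiv0$'' type. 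Then by the energy decoupling each profile has mass-energy strictly below $E_c$, so each nonlinear profile scatters with $\|v^j\|_{X(\R)}\lesssim_{M(\varphi^j)+E(\varphi^j)}1$, and in fact one gets the refined bound $\|v^j\|_{X(\R)}^2\lesssim M(\varphi^j)+E(\varphi^j)$ for all but finitely many $j$ from the small-data theory (Proposition~\ref{P:WellPosed}(iv)), so that $\sum_j\|v_n^j\|_{X(\R)}^2<\infty$. (3) \textbf{Build an approximate solution.} Set $u_n^J=\sum_{j=1}^J v_n^j(t,\cdot)\big(\text{appropriately translated}\big)+e^{it\Delta}w_n^J$; using the asymptotic orthogonality of the parameters, the pointwise estimate \eqref{E:PW_2}, the nonlinear estimate \eqref{E:NonLinEst}, and the fact that $e^{it\Delta}w_n^J$ is small in $X(\R)$, show that $u_n^J$ is an approximate solution to \eqref{nls} with error small in $Y(\R)$ and with $\|u_n^J\|_{X(\R)}$ bounded uniformly in $n$ and $J$ (for $n,J$ large). (4) \textbf{Apply stability.} Proposition~\ref{P:Stability} then gives $\|u_n\|_{X(\R)}\lesssim1$ for $n$ large, contradicting \eqref{E:PS_A2}. (5) \textbf{Conclude.} Hence $J^*=1$ with a single profile of the ``$x_n\equiv0$, $t_n\equiv0$'' type; the decoupling forces $M(w_n^1)+E(w_n^1)\to0$, hence $\|w_n^1\|_{H^1}\to0$ (using that the potential energy is nonnegative and controls nothing negative, plus $\|w_n^1\|_{L^2}\to0$), so $f_n\to\varphi^1$ strongly in $H^1$ along a subsequence.

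The main obstacle is step~(3): verifying that the superposition $u_n^J$ really is a good approximate solution, i.e.\ bounding $\|F(u_n^J)-\sum_j F(v_n^j)\|_{Y(\R)}+\|F(\sum_j v_n^j)-\sum_j F(v_n^j)\|_{Y(\R)}\to0$ as $n\to\infty$ and then $J\to\infty$, and establishing the uniform $X(\R)$ bound $\sup_{n,J}\|u_n^J\|_{X(\R)}<\infty$. This requires the cross-term estimates coming from \eqref{E:PW_2} together with the asymptotic orthogonality $|t_n^j-t_n^k|+|x_n^j-x_n^k|\to\infty$, which makes products of distinct (suitably translated) profiles vanish in the relevant space-time norms; one must also handle the contribution of the linear remainder $e^{it\Delta}w_n^J$ carefully, since it is only small in $X(\R)$ (not in $H^1$). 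A subtlety specific to our setting is that for the far-away profiles the approximating objects are the solutions $v_n^j$ from Propositions~\ref{P:SFAP1}--\ref{P:SFAP2} (which depend on $n$), and it is exactly the approximation of these by fixed $C_c^\infty(\R\times\R^2)$ functions, guaranteed by those propositions, that lets the orthogonality arguments go through uniformly; this is why those approximation statements were included. The remaining steps are routine once this is in place.
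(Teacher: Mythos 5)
Your overall route is the same as the paper's: time-translate, apply Proposition~\ref{P:LPD}, build nonlinear profiles (solving \eqref{nls} directly, via the final-state problem, or via Propositions~\ref{P:SFAP1}--\ref{P:SFAP2}), sum them with the linear remainder, verify the uniform $X$ bound and the smallness of the error in $Y$ using asymptotic orthogonality and the $C_c^\infty$ approximation of the far-away profiles, and then invoke Proposition~\ref{P:Stability} to contradict \eqref{E:PS_A2}. However, there is a genuine gap in your dichotomy step (2). You claim that whenever there is ``one profile that is not of the $(x_n,t_n)\equiv(0,0)$ type,'' the energy decoupling forces that profile strictly below $E_c$. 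This is false in the single-profile case: if $J^*=1$ the remainder may carry no mass or kinetic energy in the limit, so the profile can saturate the threshold, i.e.\ $M(\varphi^1)+\lim_n E(e^{it_n^1\Delta}\varphi^1)=E_c$. For a single profile with $x_n^1\equiv 0$ and $|t_n^1|\to\infty$ this matters: the final-state solution attached to $\varphi^1$ then has mass-energy possibly equal to $E_c$, so you cannot appeal to the definition of $E_c$ (which only controls solutions with mass-energy strictly below the threshold) to conclude $\|v^1\|_{X(\R)}<\infty$; and scattering forward in time alone does not give a global $X(\R)$ bound. So this case is simply not covered by your argument. (For a single far-away profile the conclusion survives, since Propositions~\ref{P:SFAP1}--\ref{P:SFAP2} give the $X$ bound without any sub-threshold hypothesis, even though your stated justification is not the right one there either.)

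The paper avoids this by structuring the contradiction hypothesis differently: it assumes \emph{all} profiles have mass-energy at most $E_c-3\delta$ (so that, in the time-divergent case, $M(v^j)+E(v^j)\le M(\varphi^j)+E(\varphi^j)+\delta<E_c$), derives the uniform $X$ bound and the contradiction with \eqref{E:PS_A2}, concludes $J^*=1$ with $w_n^1\to0$ in $H^1$, and only \emph{then} rules out the remaining parameter behaviors: $|x_n^1|\to\infty$ is excluded via Proposition~\ref{P:SFAP1} or \ref{P:SFAP2}, and $|t_n^1|\to\infty$ is excluded by a separate argument in which $\tilde u_n(t)=e^{it\Delta}u_n(0)$ serves as the approximate solution --- since $\|e^{it\Delta}u_n(0)\|_{X(0,\infty)}\approx\|e^{it\Delta}\varphi^1\|_{X(t_n^1,\infty)}\to0$ when $t_n^1\to\infty$ (and symmetrically for $t_n^1\to-\infty$) --- so stability bounds $\|u_n\|_{X}$ on the corresponding half-line, contradicting \eqref{E:PS_A2}. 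You need to add this argument (or an equivalent one) to close the single time-divergent-profile case; the rest of your outline matches the paper, including the use of bilinear orthogonality of distinct profiles in a critical product space and the role of the $C_c^\infty$ approximation for the $n$-dependent far-away profiles.
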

\begin{proof}
By applying a time translation if necessary, we may assume that $t_n\equiv0$. Further, we may apply Proposition~\ref{P:LPD} to the sequence $\{u_n(0)\}$ so that after passing to a subsequence we have for any finite $J\leq J^*$
\[
u_n(0,x)=\sum_{j=1}^Je^{it_n^j\Delta}\varphi^j(x-x_n^j)+w_n^J,
\]
satisfying the properties of Proposition~\ref{P:LPD} with $e^{it\Delta}w_n^J$ vanishing in $X(\R)$. We will show that
\begin{equation}\label{E:PS_oneProfile}
\sup_{j}\limsup_{n\to\infty}\{M(\varphi^j)+E(\varphi^j)\}=E_c,
\end{equation}
which together with the mass/energy decoupling will guarantee that $J^*=1$ and $w_n^1\to 0$ in $H^1$ as $n\to\infty$.

To this end, we suppose that for some $\delta>0$ we have
\begin{equation}\label{E:PS_manyProfiles}
\sup_{j}\limsup_{n\to\infty}M(\varphi^j)+E(\varphi^j)\leq E_c-3\delta,
\end{equation}
in which case we will show that we can find a uniform bound on the $X$-norms for the $u_n$, contradicting \eqref{E:PS_A2}.

We now associate to each $\varphi^j$ a non-linear profile. If $(t_n^j,x_n^j)\equiv0$ then we define $v^j$ as the solution to \eqref{nls} with initial data $v^j(0,x)=\varphi^j$. In this case \eqref{E:PS_manyProfiles} guarantees that 
\[
M(v^j)+E(v^j)=M(\varphi^j)+E(\varphi^j)<E_c,
\]
so that $\|v^j\|_{X(\R)}<\infty$.

Similarly, if $t_n^j\to \infty$ (resp. $t_n^j\to -\infty$) and $x_n^j\equiv0$ we define $v^j$ as the solution to \eqref{nls} that scatters to $\varphi^j$ as $t\to\infty$ (resp. $t\to -\infty$). In this case, we can use the dispersive estimate and conservation of mass and energy to conclude that
\[
M(v^j)+E(v^j)\leq M(\varphi^j)+E(\varphi^j)+\delta<E_c,
\]
so that again $\|v^j\|_{X(\R)}<\infty$.
\\In either of these cases we then define $v_n^j(t,x)=v^j(t+t_n^j,x)$.

Finally, if $|x_n^j|\to \infty$ we construct $v_n^j$ by applying Proposition~\ref{P:SFAP1} or Proposition~\ref{P:SFAP2} depending on if $p\leq 2$ or $p>2$. We now define our approximate solutions to \eqref{nls} by setting
\[
u_n^J=\sum_{j=1}^Jv_n^j+e^{it\Delta}w_n^J. 
\]
By construction, we have that 
\[
\lim_{n\to\infty}\|u_n^J(0)-u_n(0)\|_{H^1}=0.
\]
We will additionally show that
\begin{equation}\label{E:PS_goodBound}
\limsup_{J\to J^*}\limsup_{n\to\infty} \|u_n^J\|_{X(\R)}\lesssim 1,
\end{equation}
and that
\begin{equation}\label{E:PS_errorBound}
\lim_{J\to J^*}\limsup_{n\to\infty}\|e_n^J\|_{Y(\R)}=0,
\end{equation}
where $e_n^J$ is the error
\[
e_J=(i\partial_t+\Delta)u_n^J-a(x)|u_n^J|^pu_n^J.
\]
Once we have established \eqref{E:PS_goodBound} and \eqref{E:PS_errorBound}, for $n,J$ sufficiently large we may apply stability, Proposition~\ref{P:Stability}, with the initial conditions $u_n(0)$, which will yield a uniform in $n$ bound on the $\|u_n\|_{X(\R)}$-norms, contradicting \eqref{E:PS_A2} and establishing \eqref{E:PS_oneProfile}. The key to proving \eqref{E:PS_goodBound} and \eqref{E:PS_errorBound} will be to exploit the asymptotic orthogonality of the parameters $(t_n^j,x_n^j)$ showing that in a suitable norm the product of any different $v_n^j,v_n^k$ will vanish.

Recalling the definition of $q,r$ from \ref{E:ExpDefsqr} (and in particular noting that $q,r>4$ for all choices of $p$) we define an auxillary function space
\[
X'(I)=L_t^\frac{q}{2}L_x^\frac{r}{2}(I\times\R^2).
\]
The importance of the $X'$-space is that for any $p>0$ an application of H\"older's inequality yields the following estimate
\begin{equation}\label{E:PS_nonlin1}
\|agh^p\|_{Y(\R)}\leq \begin{cases}
\|a\|_{L_x^\rho}\|gh\|_{X'(\R)}^p\|g\|_{X(\R)}^{1-p}& p\leq 1\\
\|a\|_{L_x^\rho}\|gh\|_{X'(\R)}\|h\|_{X(\R)}^{p-1} & p>1
\end{cases}
\end{equation}
for arbitrary functions $g,h:\R^2\to \C$. Then the precise form of orthogonality we will need is the following.

\begin{lemma}[Orthogonality]\label{L:Orthog}
For $j\not=k$ we have that
\[
\lim_{n\to\infty}\|v_n^jv_n^k\|_{X'(\R)}=0.
\]
\end{lemma}
The proof relies on approximation by $C_c^\infty$-functions (cf. \cite{KV, V}).

We are now in a position to prove \eqref{E:PS_goodBound} and \eqref{E:PS_errorBound}.
\begin{proof}[Proof of \eqref{E:PS_goodBound}]
By the vanishing property of the $e^{it\Delta}w_n^J$ it suffices to show that
\[
\limsup_{J\to J^*}\limsup_{n\to\infty}\left\|\sum_{j=1}^Jv_n^j\right\|_{X}\lesssim1.
\]
For any $\eta>0$ by $H^1$-decoupling we can find some $J_0=J_0(\eta)$ such that
\begin{equation}\label{E:PS_eta}
\sum_{j=J_0}^{J^*}\|\varphi^j\|_{H^1}^2<\eta.
\end{equation}
Choosing $\eta$ sufficiently small we may then appeal to the small-data theory of Proposition~\ref{P:WellPosed} to obtain
\begin{equation}\label{E:PS_smallDataBound}
\sum_{j=J_0}^{J^*}\|v_n^j\|_{X(\R)}^2\lesssim \eta.
\end{equation}
We now estimate for any $J>J_0$ via the triangle inequality, \eqref{E:PS_smallDataBound}, and Lemma~\ref{L:Orthog}
\begin{align*}
\left\|\sum_{j=1}^Jv_n^j\right\|_{X}^2&\lesssim\sum_{j=1}^{J_0}\|v_n^J\|_{X}^2+\sum_{j={J_0}}^J\|v_n^j\|_X^2+C(J)\sum_{j\not=k}\|v_n^jv_n^k\|_{X'}\\
&\lesssim\sum_{j=1}^{J_0}\|v_n^j\|_{X}^2+\eta+C(J)o_n(1),
\end{align*}
which is acceptable.
\end{proof}
\begin{proof}[Proof of \eqref{E:PS_errorBound}]
Let us once again use the notation $f(z)=|z|^pz$ and $F(x,z)=a(x)f(z)$, where again the $x$-dependence will be suppressed. We first rewrite the error as 
\begin{align}
e_n^J&=\sum_{j=1}^JF(v_n^j)-F\left(\sum_{j=1}^Jv_n^j\right)\label{E:PS_error1}\\
&+F(u_n^J-e^{it\Delta}w_n^J)-F(u_n^J)\label{E:PS_error2}.
\end{align}
Now we estimate each of these terms separately relying on pointwise estimates. For \eqref{E:PS_error1}, we estimate using the pointwise estimate \eqref{E:PW_2} and \eqref{E:PS_nonlin1}    
\begin{align*}
\|\eqref{E:PS_error1}\|_{Y}&\lesssim_J\sum_{j\not=k}\|a\,|v_n^j|\,|v_n^k|^p\|_{Y}\\
&\lesssim_J\begin{cases}
    \displaystyle\sum_{j\not=k}\|a\|_{L_x^\rho}\|v_n^jv_n^k\|_{X'}^p\|v_n^j\|_X^{1-p}&p\leq 1\\
    \displaystyle\sum_{j\not=k}\|a\|_{L_x^\rho}\|v_n^jv_n^k\|_{X'}\|v_n^k\|_{X}^{p-1}&p>1
\end{cases}
\end{align*}
which for fixed $J$ tends to $0$ as $n\to \infty$ by \eqref{E:PS_goodBound} and orthogonality, Lemma~\ref{L:Orthog}. For \eqref{E:PS_error2}, we estimate using the pointwise estimate \eqref{E:PW_1} and the nonlinear estimate \eqref{E:NonLinEst}
\begin{align*}
\|\eqref{E:PS_error2}\|_{Y}&\lesssim \|a\|_{L_x^\rho}\|e^{it\Delta}w_n^J\|_{X}\left(\|e^{it\Delta}w_n^J\|_{X}^p+\|u_n^J\|_{X}^p\right),
\end{align*}
which again tends to $0$ as $n\to\infty$ and $J\to J^*$ by \eqref{E:PS_goodBound} and the vanishing property of the $e^{it\Delta}w_n^J$.
\end{proof}
Having established \eqref{E:PS_goodBound} and \eqref{E:PS_errorBound}, we may conclude that $J^*=1$ and that $w_n^1\to 0$ in $H^1$ as $n\to\infty$. To conclude the proof we need to show that the parameters $(t_n^1,x_n^1)\equiv0$. 

If the $|x_n^1|\to \infty$, then we may quickly reach a contradiction by applying one of Proposition~\ref{P:SFAP1} or Proposition~\ref{P:SFAP2} depending on the value of $p$. If the $|t_n^1|\to\infty$ then setting
\[
\tilde{u}_n(t)=e^{it\Delta}u_n(0),
\]
one can show that the $\tilde{u}_n$ form good approximate solutions to \eqref{nls} with uniformly bounded $X$-norms for sufficiently large $n$, and apply stability to reach a contradiction. We omit the standard details.
\end{proof}

Having established the preceding result, we can quickly establish the construction of a compact solution.

\begin{proposition}[Reduction to Compact Solutions]\label{P:Reduct}
If Theorem~\ref{T} fails, then there exists a solution $v:\R\times\R^2\to \C$ to \eqref{nls} with $v(0,x)\in H^1$ such that $\{v(t):t\in \R\}$ is pre-compact in $H^1$.
\end{proposition}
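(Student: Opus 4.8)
The plan is to extract a minimal non-scattering solution from the failure of Theorem~\ref{T} using the Palais--Smale condition, Proposition~\ref{P:PSmale}, following the standard concentration-compactness scheme. First I would use the definition of the critical threshold: since Theorem~\ref{T} is assumed to fail and $E_c<\infty$ (the alternative $E_c=\infty$ being exactly the statement of the theorem), the quantity $L(E)$ is finite for $E<E_c$ and infinite for $E$ just above $E_c$. Hence I can choose a sequence of solutions $v_n$ to \eqref{nls} with $M(v_n)+E(v_n)\to E_c$ and $\|v_n\|_{X(\R)}\to\infty$. Since each $v_n$ is defined on all of $\R$ with divergent $X(\R)$-norm, I can split $\R$ at a point $t_n$ so that the $X$-norm of $v_n$ is infinite on both $(-\infty,t_n)$ and $(t_n,\infty)$; more precisely, using continuity of the map $t\mapsto \|v_n\|_{X(-\infty,t)}$ one selects $t_n$ realizing a balance, and then shows $\|v_n\|_{X(t_n,\infty)}=\|v_n\|_{X(-\infty,t_n)}=\infty$ (or at least $\to\infty$ along the sequence after relabeling).

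With the hypotheses \eqref{E:PS_A1} and \eqref{E:PS_A2} of Proposition~\ref{P:PSmale} verified, that proposition yields a subsequence along which $v_n(t_n)$ converges in $H^1$ to some $v_0\in H^1$. I would then let $v$ be the global solution to \eqref{nls} with $v(0)=v_0$ (global existence from Proposition~\ref{P:WellPosed}(i)). By the stability theory, Proposition~\ref{P:Stability}, since $v_n(t_n)\to v_0$ in $H^1$ and the equation is the same (no error term, $e\equiv 0$), if $v$ had finite $X(\R)$-norm then $v_n(t_n{+}\cdot)$ would have uniformly bounded $X(\R)$-norms for large $n$, contradicting \eqref{E:PS_A2}. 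Therefore $\|v\|_{X(\R)}=\infty$, so $v$ is a non-scattering solution, and necessarily $M(v)+E(v)=E_c$: the mass-energy cannot exceed $E_c$ because $v_n(t_n)\to v_0$ forces $M(v_0)+E(v_0)=\lim (M(v_n)+E(v_n))=E_c$ by conservation laws and continuity of $M,E$ on $H^1$ together with \eqref{E:PS_A1}, while it cannot be strictly below $E_c$ since then the small/subthreshold theory would give $\|v\|_{X(\R)}<\infty$.

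To get precompactness of the orbit $\{v(t):t\in\R\}$ in $H^1$, I would apply Proposition~\ref{P:PSmale} again, now to the sequence $u_n=v$ (the same solution) evaluated along an arbitrary sequence of times $\{\tau_n\}\subset\R$. Since $\|v\|_{X(\R)}=\infty$ and $v$ is defined on all of $\R$, for any sequence $\tau_n$ one has $\|v\|_{X(\tau_n,\infty)}=\infty$ or $\|v\|_{X(-\infty,\tau_n)}=\infty$; by a standard argument (if $\|v\|_{X(\tau_n,\infty)}$ stayed bounded along a subsequence, local theory would propagate a finite $X$-bound backward, giving $\|v\|_{X(\R)}<\infty$) one checks that in fact both norms diverge as $n\to\infty$. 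Then Proposition~\ref{P:PSmale} (with $u_n\equiv v$, $t_n=\tau_n$, noting $M(v)+E(v)=E_c$ so \eqref{E:PS_A1} holds trivially) gives that $v(\tau_n)$ has an $H^1$-convergent subsequence. Since $\{\tau_n\}$ was arbitrary, $\{v(t):t\in\R\}$ is precompact in $H^1$.

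\textbf{Main obstacle.} The delicate point is the time-splitting: ensuring that for the extracted critical solution $v$, \emph{every} sequence of times leads to divergence of the $X$-norm on \emph{both} half-lines, so that Proposition~\ref{P:PSmale} applies. This requires the observation that finiteness of $\|v\|_{X(I)}$ on any half-infinite interval, combined with the local well-posedness and stability theory, would propagate to all of $\R$ and contradict $\|v\|_{X(\R)}=\infty$; one must also handle the initial construction of $t_n$ for the $v_n$ carefully, using continuity of $t\mapsto\|v_n\|_{X(J)}$ in the endpoints of $J$ and the fact that $v_n$ is already known to be global. Everything else is bookkeeping with conservation laws and the already-proven Propositions~\ref{P:WellPosed}, \ref{P:Stability}, and \ref{P:PSmale}.
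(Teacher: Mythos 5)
Your overall scheme is the same as the paper's: extract solutions $v_n$ and times $t_n$ with the $X$-norms diverging on both half-lines, apply Proposition~\ref{P:PSmale} to obtain an $H^1$-limit $v_0$, let $v$ solve \eqref{nls} with data $v_0$, and then reapply Proposition~\ref{P:PSmale} along arbitrary time sequences to conclude precompactness. However, there is a genuine flaw in how you verify hypothesis \eqref{E:PS_A2} for the critical element $v$. From stability you only extract $\|v\|_{X(\R)}=\infty$, and you then claim that if $\|v\|_{X(\tau_n,\infty)}$ stayed bounded along a subsequence, ``local theory would propagate a finite $X$-bound backward, giving $\|v\|_{X(\R)}<\infty$.'' That implication is false: finiteness of the $X$-norm on a forward half-line yields forward scattering via Proposition~\ref{P:WellPosed}(iii), but it says nothing about the backward half-line; a priori the critical element could scatter forward and fail to scatter backward, which is perfectly compatible with $\|v\|_{X(\R)}=\infty$. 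With only $\|v\|_{X(\R)}=\infty$ in hand you cannot apply Proposition~\ref{P:PSmale} along, say, $s_n\to+\infty$, so precompactness of the full orbit does not follow from your argument as written.

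The repair is exactly what the paper does, and it fits inside the stability step you already invoke: apply Proposition~\ref{P:Stability} separately on the two half-lines. Since $v_n(t_n)\to v(0)$ in $H^1$ and $\|v_n\|_{X(t_n,\infty)}\to\infty$, finiteness of $\|v\|_{X(0,\infty)}$ would (by stability with $e\equiv 0$ and $\tilde u=v$ restricted to $[0,\infty)$) force $\|v_n\|_{X(t_n,\infty)}\lesssim 1$ for large $n$, a contradiction; arguing identically on $(-\infty,0]$ gives $\|v\|_{X(-\infty,0)}=\|v\|_{X(0,\infty)}=\infty$. Then, for an arbitrary sequence $\{s_n\}$, the $X$-norm of the global $H^1$ solution $v$ is finite on every compact interval by the local theory, so $\|v\|_{X(s_n,\infty)}=\|v\|_{X(-\infty,s_n)}=\infty$ for every $n$, and the second application of Proposition~\ref{P:PSmale} goes through. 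The remaining ingredients of your proposal (the balancing choice of $t_n$, the identification $M(v)+E(v)=E_c$ via continuity of the conserved quantities and the definition of $E_c$, and the final precompactness step) coincide with the paper's argument.
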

\begin{proof}
Suppose that Theorem~\ref{T} fails, so that $E_c<\infty$. Then by definition, we may find a sequence of solutions $u_n$ to \eqref{nls} such that
\[
M(u_n)+E(u_n)\nearrow E_c\qtq{and} \lim_{n\to\infty}\|u_n\|_{X(\R)}=\infty.
\]
In particular, we may find a sequence of times $t_n$ such that
\[
\lim_{n\to\infty}\|u_n\|_{X(-\infty,t_n)}=\lim_{n\to\infty}\|u_n\|_{X(t_n,\infty)}=\infty.
\]
After applying a time-translation we may further assume that $t_n\equiv0$.
Applying the Palais-Smale condition Proposition~\ref{P:PSmale} we see that there exists some $\varphi\in H^1$ such that $u_n(t_n)\to \varphi$ in $H^1$ as $n\to\infty$. Define $v$ as the solution to \eqref{nls} with initial data $v(0,x)=\varphi$. By stability, we have that
\[
M(v)+E(v)=E_c\qtq{and}\|v\|_{X(-\infty,0)}=\|v\|_{X(0,\infty)}=\infty.
\]
We now seek to show that $\{v(t):t\in \R\}$ is pre-compact in $H^1$. Letting $\{s_n\}\subset\R$ be an arbitrary sequence of real numbers we see that
\[
\lim_{n\to\infty}\|v\|_{X(-\infty,s_n)}=\lim_{n\to\infty}\|v\|_{X(s_n,\infty)}=\infty.
\]
Thus another application of Proposition~\ref{P:PSmale} implies that $v(s_n)$ converges along a subsequence in $H^1$, yielding the desired pre-compactness.
\end{proof}

\section{Preclusion of Compact Solutions}\label{S:Preclusion}

In this section, we quickly preclude the existence of compact solutions as in Proposition~\ref{P:Reduct} concluding the proof of Theorem~\ref{T}.

\begin{definition}
For $g\in H^1$ and $t\geq 0$ we define a norm
\[
\|g\|_{Z(t)}=\left\|\frac{t^\frac{1}{2}(x+2it\nabla)f}{(\langle t\rangle^3+|x|^3)^\frac{1}{2}}\right\|_{L_x^2(\R^2)}.
\]
\end{definition}
An application of H\"older's inequality shows that for all $t\geq 1$ we have $\|g\|_{Z(t)}\lesssim \|g\|_{H^1}$ uniformly in $t$.
The identity
\[
(x+2it\nabla)g=\exp\left[\frac{i|x|^2}{4t}\right]2it\nabla\exp\left[\frac{-i|x|^2}{4t}\right]g,
\]
shows that for all $g\in H^1(\R^2)$, if $\|g\|_{Z(t)}=0$ then $g\equiv0$. In \cite{N} Nakanishi established a Morawetz estimate applicable to the $1d$ and $2d$ defocusing power-type NLS involving this $Z(t)$-norm. In \cite{BM} the authors adapted this estimate to the case of \eqref{nls} relying cruically on the repulsivity of $a$. The exact estimate is as follows:
\begin{proposition}[Morawetz Estimate \cite{BM,N}]\label{P:Mora}
For $u$ a solution to \eqref{nls} one has
\[
\int_1^\infty\|u(t)\|_{Z(t)}^2\,\frac{dt}{t}\lesssim_{M(u)+E(u)}1.
\]
\end{proposition}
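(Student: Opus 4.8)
The plan is to recast this as Nakanishi's space-time Morawetz estimate \cite{N}, in the inhomogeneous form of \cite{BM}; the cleanest route passes through the pseudoconformal change of variables, which turns \eqref{nls} on $[1,\infty)$ into a defocusing NLS on a \emph{bounded} time interval, for which a Lin--Strauss type argument applies. Concretely, I would set $\tau=-1/t\in(-1,0)$, $y=x/t$, and $g(\tau,y)=-\tau^{-1}e^{i|y|^2/4\tau}u(-1/\tau,-y/\tau)$; a direct computation (using $d=2$) then shows that $g$ solves
\[
i\partial_\tau g+\Delta_y g=|\tau|^{p-2}\,\tilde a(\tau,y)\,|g|^p g,\qquad \tilde a(\tau,y):=a(-y/\tau),
\]
on $(-1,0)\times\R^2$. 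Three features make this useful: (i) $\tilde a(\tau,\cdot)\ge0$ and remains repulsive, since $y\cdot\nabla_y\tilde a(\tau,y)=(x\cdot\nabla_x a)(-y/\tau)\le0$; (ii) the transformation intertwines $x+2it\nabla_x$ with $\nabla_y$ and $\nabla_x$ with $y+2i\tau\nabla_y$, so that $\|g(\tau)\|_{L^2}^2=M(u)$ and $\|(y+2i\tau\nabla_y)g(\tau)\|_{L^2}^2=4\|\nabla_x u(-1/\tau)\|_{L^2}^2\lesssim E(u)$ uniformly in $\tau$; and (iii) an elementary change of variables identifies
\[
\int_1^\infty\|u(t)\|_{Z(t)}^2\,\frac{dt}{t}\ \sim\ \int_{-1}^0|\tau|\int_{\R^2}\frac{|\nabla_y g(\tau,y)|^2}{1+|y|^3}\,dy\,d\tau .
\]
It then suffices to bound the right-hand side by $M(u)+E(u)$.

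The next step is the virial/Lin--Strauss Morawetz identity for $g$, with a convex radial weight $\phi$ — for concreteness $\phi(y)=\langle y\rangle$, so that $\operatorname{Hess}\phi\ge0$, $\operatorname{Hess}\phi(v,v)\ge\langle y\rangle^{-3}|v|^2$, $\Delta\phi\ge0$, and $\Delta^2\phi\in L^\infty(\R^2)$ — and with the $\tau$-weighting of the Morawetz action prescribed by \cite{N}. Integrating the identity over $(-1,0)$ produces, with $\mu:=|\tau|^{p-2}\tilde a$, a relation in which the left side contains $\int_{-1}^0|\tau|\int\operatorname{Hess}\phi(\nabla_yg,\nabla_yg)\ge\int_{-1}^0|\tau|\int|\nabla_yg|^2/\langle y\rangle^3$ (the quantity from (iii)) together with two nonnegative nonlinear terms: $\tfrac{p}{p+2}\int|\tau|\,\mu|g|^{p+2}\Delta\phi\ge0$ (from $\tilde a\ge0$ and convexity of $\phi$) and $-\tfrac1{p+2}\int|\tau|\,(\nabla\phi\cdot\nabla_y\mu)|g|^{p+2}\ge0$ — the latter precisely because $\nabla\phi\cdot\nabla_y\mu=\phi'(|y|)\,|\tau|^{p-2}\,|y|^{-1}\,y\cdot\nabla_y\tilde a\le0$ by repulsivity. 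This is the one place repulsivity of $a$ enters, and it is exactly the observation of \cite{BM}. Discarding the two good nonlinear terms, one is left with an inequality whose right side consists of a biharmonic term $\lesssim\|\Delta^2\langle y\rangle\|_{L^\infty}M(u)$ and boundary terms at $\tau=-1,0$; rewriting the Morawetz density through $y+2i\tau\nabla_y$ and $yg$ (equivalently, through $\nabla_x u$, which is $E$-controlled) shows these boundary terms are $\lesssim M(u)+E(u)$, with the apparent singularity at $\tau=0$ — where $\|\nabla_y g(\tau)\|_{L^2}$ need not stay bounded — absorbed. Transforming back via (iii) gives the stated estimate.

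The hard part will \emph{not} be the sign of the nonlinear terms (repulsivity handles them, and the argument is in fact insensitive to $p$ and to the size of $a$, hence covers $p\le2$ as well), but rather the construction of the Morawetz multiplier so that the associated action stays bounded by $M(u)+E(u)$ uniformly in time — in particular as $t\to\infty$, where $\|(x+2it\nabla)u(t)\|_{L^2}$ can fail to be finite. This is the technical core of \cite{N}, and the precise cubic weights appearing in the $Z(t)$-norm are exactly what makes it work; once it is in place, passing from $d=1$ (treated in \cite{BM}) to $d=2$ requires no new idea, since \cite{N} already handles both dimensions and the only dimension-sensitive ingredient, $\Delta^2\langle y\rangle$, is bounded in every dimension. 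I would therefore carry out the reduction and the identity above in detail and, for the multiplier construction and the boundary-term bookkeeping, follow \cite{N,BM} essentially verbatim.
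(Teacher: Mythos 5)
The paper itself offers no proof of Proposition~\ref{P:Mora}: it is imported wholesale from \cite{N}, in the form adapted to repulsive inhomogeneities in \cite{BM}. Measured against that, your reduction is a faithful reconstruction of the intended argument, and the ingredients specific to the present setting check out: in $d=2$ the pseudoconformal change of variables does produce the coefficient $|\tau|^{p-2}\tilde a$ with $\tilde a\ge0$ and $y\cdot\nabla_y\tilde a\le0$, the intertwining of $x+2it\nabla_x$ with $\nabla_y$ and of $\nabla_x$ with $y+2i\tau\nabla_y$ gives the uniform mass/energy bounds you state, the identification of $\int_1^\infty\|u(t)\|_{Z(t)}^2\,dt/t$ with $\int_{-1}^0|\tau|\int|\nabla_y g|^2\langle y\rangle^{-3}\,dy\,d\tau$ is correct, and the signs of the two nonlinear terms in the virial identity (the $\Delta\phi$ term and the $-\nabla\phi\cdot\nabla\mu$ term) are exactly the repulsivity observation the paper credits to \cite{BM}.

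The genuine gap is in the multiplier you actually propose, namely the product weight $|\tau|\langle y\rangle$, and in labelling what goes wrong as ``boundary terms.'' Differentiating the time factor $|\tau|$ produces, besides the boundary contributions at $\tau=-1$ and $\tau\to0^-$, a \emph{bulk} term $\int_{-1}^0 M_\phi(\tau)\,d\tau$ with $M_\phi=2\Im\int\bar g\,\nabla\phi\cdot\nabla g\,dy$; and every bound you have for $M_\phi$ is deficient, because after the pseudoconformal transform the quantity $\|\nabla_y g(\tau)\|_{L^2}=\tfrac12\|(x+2it\nabla)u(t)\|_{L^2}$ is not controlled by (indeed need not be finite for) general $H^1$ data. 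Rewriting $\nabla_y g$ through $(y+2i\tau\nabla_y)g$, as you suggest, trades this for either a factor $1/|\tau|$ --- so the bulk term is only bounded by $(M+E)\int_{-1}^0 d\tau/|\tau|$, a logarithmic divergence, consistent with the fact that the trivial bound $\|u(t)\|_{Z(t)}\lesssim\|u(t)\|_{H^1}$ misses the proposition by exactly a logarithm --- or for the spatial weight $\nabla\phi\cdot y=|y|^2/\langle y\rangle$, whose integral against $|g|^2$ at $\tau=-1$ is $\approx\int|x|\,|u(1,x)|^2\,dx$, again not controlled by $M(u)+E(u)$. So neither the extra bulk term nor the $\tau=-1$ boundary term closes with $\phi(y)=\langle y\rangle$; removing this logarithm and these uncontrolled quantities is precisely the content of Nakanishi's genuinely space-time multiplier (this is what the cubic weights in $Z(t)$ encode), not bookkeeping. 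Since you explicitly defer that construction to \cite{N,BM} --- which is also all the paper does --- your plan is salvageable, but as written the step ``the apparent singularity at $\tau=0$ is absorbed'' is the unproved heart of the matter, and your write-up should either reproduce Nakanishi's multiplier in the transformed variables or work directly in the original variables as in \cite{BM}.
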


To reach our desired contradiction, we additionally prove the following.

\begin{proposition}\label{P:Preclusion}
Let $u$ be a nonzero compact solution to \eqref{nls} as in Proposition~\ref{P:Reduct}. Then
\[
\inf_{t\geq 1}\|u(t)\|_{Z(t)}\gtrsim1.
\]
\end{proposition}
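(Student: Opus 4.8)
The plan is to argue by contradiction: suppose that there is a sequence of times $t_n \geq 1$ with $\|u(t_n)\|_{Z(t_n)} \to 0$. The strategy splits according to whether $\{t_n\}$ stays bounded or not, using compactness of the orbit $\{u(t): t \in \R\}$ in $H^1$ in both cases.

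First, if $t_n$ has a bounded subsequence, then after passing to a further subsequence we may assume $t_n \to t_* \in [1,\infty)$. By compactness of the orbit, along a subsequence $u(t_n) \to g$ in $H^1$ for some $g \in H^1$. Since the solution is nonzero and well-posed (Proposition~\ref{P:WellPosed}) with conserved mass, $g \neq 0$. One then checks that the map $(s,h) \mapsto \|h\|_{Z(s)}$ is continuous on $[1,\infty) \times H^1$ (for $s$ bounded away from $0$ the weight $t^{1/2}/(\langle t\rangle^3 + |x|^3)^{1/2}$ is a smooth bounded function, and $x h, \nabla h \in L^2$ depend continuously on $h \in H^1$ in the relevant weighted sense), so $\|u(t_n)\|_{Z(t_n)} \to \|g\|_{Z(t_*)}$. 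Hence $\|g\|_{Z(t_*)} = 0$, and by the factorization identity $(x + 2it\nabla)g = e^{i|x|^2/4t}\, 2it\nabla(e^{-i|x|^2/4t}g)$ recorded before Proposition~\ref{P:Mora}, this forces $g \equiv 0$, a contradiction.

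Second, suppose $t_n \to \infty$. Here the key is to combine the smallness of $\|u(t_n)\|_{Z(t_n)}$ with compactness to show $u(t_n) \to 0$ in $H^1$, again contradicting $M(u) = M(u(t_n)) > 0$. By pre-compactness of $\{u(t_n)\}$ in $H^1$, along a subsequence $u(t_n) \to g$ in $H^1$. The idea is that on the region $|x| \lesssim t_n$ the weight in $\|\cdot\|_{Z(t_n)}$ behaves like $t_n^{1/2}/t_n^{3/2} = t_n^{-1}$ times $(x + 2it_n\nabla)u(t_n)$, while on $|x| \gtrsim t_n$ it behaves like $t_n^{1/2}/|x|^{3/2}$; in either case, after unpacking and using that $t_n \to \infty$, the vanishing of $\|u(t_n)\|_{Z(t_n)}$ together with the uniform $H^1$-bound and the compactness (which gives uniform decay of $u(t_n)$ at spatial infinity, hence of the tails) should force the limit $g$ to satisfy $g \equiv 0$. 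More carefully: for fixed $R$, on $|x| \leq R$ one has $\langle t_n\rangle^3 + |x|^3 \leq \langle t_n\rangle^3 + R^3 \lesssim t_n^3$ for large $n$, so the weight is $\gtrsim t_n^{1/2}\cdot t_n^{-3/2}\cdot t_n = 1$ after absorbing the $t_n$ multiplying $\nabla$; thus $\|\langle t_n\rangle^{-1}(x+2it_n\nabla)u(t_n)\|_{L^2(|x|\leq R)} \to 0$, which upon expanding and using $t_n \to \infty$ yields $\|\nabla u(t_n)\|_{L^2(|x| \leq R)} \to 0$, hence $\nabla g \equiv 0$ on $|x| \leq R$; letting $R \to \infty$ gives $\nabla g \equiv 0$, so $g$ is constant, so $g \equiv 0$ as $g \in L^2$. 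This contradicts $\|g\|_{L^2} = \lim \|u(t_n)\|_{L^2} = M(u)^{1/2} > 0$.

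I expect the second case (the $t_n \to \infty$ regime) to be the main obstacle, since one must carefully track how the weight $t^{1/2}/(\langle t\rangle^3 + |x|^3)^{1/2}$ interacts with the operator $x + 2it\nabla$ on both the near region $|x| \lesssim t_n$ and the far region $|x| \gtrsim t_n$, and must genuinely use the pre-compactness of the orbit — a single solution need not have $u(t_n) \to 0$ without it — to rule out escape of mass to spatial infinity. The contradiction with Proposition~\ref{P:Mora} is then immediate: the integral $\int_1^\infty \|u(t)\|_{Z(t)}^2\, \frac{dt}{t}$ diverges, since by Proposition~\ref{P:Preclusion} the integrand is $\gtrsim 1/t$, while Proposition~\ref{P:Mora} bounds it; this incompatibility shows no nonzero compact solution exists, completing the proof of Theorem~\ref{T} via Proposition~\ref{P:Reduct}.
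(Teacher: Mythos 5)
Your proposal is correct and follows essentially the same route as the paper: split according to whether $t_n$ stays bounded or tends to infinity, use pre-compactness of the orbit to extract an $H^1$ limit, and in the second case show that vanishing of the $Z(t_n)$-norm forces the gradient of the limit to vanish, so the limit is zero, contradicting conservation of mass. Your treatment of the $t_n\to\infty$ case (lower-bounding the weight on $|x|\le R$ and letting $R\to\infty$) is just a local reformulation of the paper's argument, which instead applies the triangle inequality to $\|\varphi\|_{Z(t_n)}$ and dominated convergence to get $\|\varphi\|_{Z(t_n)}\to\|2\nabla\varphi\|_{L^2}$.
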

\begin{proof}
Suppose that $\{t_n\}\subset\R$ is a sequence of times such that
\[
\|u(t_n)\|_{Z(t_n)}\to 0.
\]
By passing to a subsequence we may guarantee that either $t_n\to t_\infty\in [1,\infty)$ or $t_n\to\infty$. Further, because $\{u(t):t\in\R\}$ is a pre-compact set, after passing to a subsequence we have that $u(t_n)\to \varphi$ in $H^1$ for some $\varphi\in H^1$. Now if $t_n\to t_0$ because $u\in C_t([0,\infty),H_x^1(\R^2))$, we must have that $\varphi=u(t_0)$. But now by continuity of the norm,
\[
\|u(t_0)\|_{Z(t_0)}=0\qtq{implies}u(t_0)\equiv0,
\]
and by uniqueness we have that $u\equiv0$, a contradiction. 

Similarly, if $t_n\to\infty$ then we estimate 
\begin{align*}
\|\varphi\|_{Z(t_n)}&\lesssim\|u(t_n)\|_{Z(t_n)}+\|\varphi-u(t_n)\|_{Z(t_n)}\\
&\lesssim \|u(t_n)\|_{Z(t_n)}+\|\varphi-u(t_n)\|_{H^1},
\end{align*}
which tends to $0$ as $n\to\infty$. However, because the $t_n\to\infty$ we also have by the dominated convergence theorem that
\[
\|\varphi\|_{Z(t_n)}=\|2\nabla \varphi\|_{L^2}+o_n(1),
\]
and so we may conclude that $\varphi\equiv0$, which again by uniqueness implies that $u\equiv0$, again a contradiction.
\end{proof}
\begin{corollary}
Combining the preceding Proposition with Proposition~\ref{P:Mora}, we see that there are no nonzero compact solutions to \eqref{nls}, proving Theorem~\ref{T}.
\end{corollary}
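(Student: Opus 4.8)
The plan is to derive the Corollary from the two preceding propositions by an elementary integrability argument, and then to unwind the contradiction set up in Section~\ref{S:Reduction}. Suppose, toward a contradiction, that \eqref{nls} admits a nonzero solution $u$ whose orbit $\{u(t):t\in\R\}$ is pre-compact in $H^1$, as produced by Proposition~\ref{P:Reduct}; in particular $M(u)+E(u)=E_c<\infty$. By Proposition~\ref{P:Preclusion} there is a constant $c>0$ with $\|u(t)\|_{Z(t)}\geq c$ for every $t\geq 1$, whence
\[
\int_1^\infty\|u(t)\|_{Z(t)}^2\,\frac{dt}{t}\;\geq\;c^2\int_1^\infty\frac{dt}{t}\;=\;+\infty.
\]
This flatly contradicts the finite upper bound $\int_1^\infty\|u(t)\|_{Z(t)}^2\,\frac{dt}{t}\lesssim_{M(u)+E(u)}1$ supplied by the Morawetz estimate of Proposition~\ref{P:Mora} (here it is essential that $M(u)+E(u)=E_c<\infty$, so the right-hand side is genuinely finite). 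Hence no nonzero pre-compact solution to \eqref{nls} exists.

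To conclude Theorem~\ref{T}, I would run the contrapositive of the reduction: if Theorem~\ref{T} failed then $E_c<\infty$, and Proposition~\ref{P:Reduct} would furnish precisely such a nonzero pre-compact solution, which the previous paragraph forbids. Therefore $E_c=\infty$. By the definitions of $L$ and $E_c$ this means $L(E)<\infty$ for every $E$, so every $H^1$ solution of \eqref{nls} --- which automatically has finite conserved mass and energy --- obeys $\|u\|_{X(\R)}<\infty$. Global existence and uniqueness are already contained in Proposition~\ref{P:WellPosed}(i), and Proposition~\ref{P:WellPosed}(iii) then upgrades each such solution to one scattering in $H^1$ as $t\to\pm\infty$, which is exactly the statement of Theorem~\ref{T}.

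Honestly, there is no real obstacle left at this stage: the two hard inputs --- the Morawetz estimate (Proposition~\ref{P:Mora}, adapted from \cite{BM,N}) and the non-degeneracy bound $\inf_{t\geq1}\|u(t)\|_{Z(t)}\gtrsim1$ (Proposition~\ref{P:Preclusion}), which itself rests on the pre-compactness of the orbit --- have already been established, and the only remaining ingredient is the observation that a positive constant is not integrable against $dt/t$ on $[1,\infty)$. The one place to be slightly careful is bookkeeping: one must check that the solution handed over by Proposition~\ref{P:Reduct} simultaneously meets the hypotheses of Propositions~\ref{P:Preclusion} and~\ref{P:Mora} (nonzero, pre-compact orbit, finite mass-energy equal to $E_c$), but all of this is explicitly recorded in the statement of Proposition~\ref{P:Reduct}, so it suffices to invoke it.
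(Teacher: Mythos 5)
Your argument is correct and is exactly the paper's intended one: the uniform lower bound of Proposition~\ref{P:Preclusion} makes $\int_1^\infty\|u(t)\|_{Z(t)}^2\,\tfrac{dt}{t}$ diverge, contradicting Proposition~\ref{P:Mora}, so no nonzero compact solution exists, and unwinding Proposition~\ref{P:Reduct} together with Proposition~\ref{P:WellPosed} gives $E_c=\infty$ and hence Theorem~\ref{T}. No gaps; your bookkeeping (nonzeroness of the compact solution, finiteness of $M(u)+E(u)=E_c$) matches what the paper records.
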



\begin{thebibliography}{100}


\bibitem{AGT} L. Aloui, M. Grira, and S. Tayachi, \emph{Scattering results for the inhomogeneous nonlinear Schrödinger equation.}  J. Math. Anal. Appl. \textbf{548} (2025), no. 1, Paper No. 129368, 20 pp.

\bibitem{BM} L. Baker and J. Murphy, \emph{Scattering for the $1d$ NLS with inhomogeneous nonlinearities.} Preprint {\tt arXiv:2509.13438}.

\bibitem{Bourgain} J. Bourgain, \emph{Global wellposedness of defocusing critical nonlinear Schr\"odinger equation in the radial case.} J. Amer. Math. Soc. \textbf{12} (1999), no. 1, 145--171.

\bibitem{BGTV} N. Burq, V. Georgiev, N. Tzvetkov, and N. Visciglia, \emph{Scattering for mass-subcritical NLS with short-range nonlinearity and initial data in $\Sigma$}. Ann. Henri Poincaré \textbf{24} (2023), no. 4, 1355--1376.

\bibitem{CC} L. Campos and M. Cardoso, \emph{A virial-Morawetz approach to scattering for the non-radial inhomogeneous NLS.}  Proc. Amer. Math. Soc. \textbf{150} (2022), no. 5, 2007--2021.

\bibitem{CFGM} M. Cardoso, L.~G. Farah, C.~M. Guzm\'an, and J. Murphy, \emph{Scattering below the ground state for the intercritical non-radial inhomogeneous NLS.} Nonlinear Anal. Real World Appl. \textbf{68} (2022), Paper No. 103687, 19 pp.

\bibitem{C} T. Cazenave, \emph{Semilinear Schrödinger equations}. Courant Lect. Notes Math., \textbf{10}. New York University, Courant Institute of Mathematical Sciences, New York; American Mathematical Society, Providence, RI, 2003.

\bibitem{CW} T. Cazenave and F. Weissler, \emph{Rapidly decaying solutions of the nonlinear Schr\"odinger equation.} Comm. Math. Phys. \textbf{147} (1992), no. 1, 75--100.

\bibitem{CGT} J. Colliander, M. Grillakis, and N. Tzirakis, \emph{Tensor products and correlation estimates with applications to nonlinear Schr\"odinger equations.} Comm. Pure Appl. Math. {\bf 62} (2009), no.~7, 920--968.

\bibitem{CHVZ}J. Colliander, J. Holmer, M. Visan, and X. Zhang, \emph{Global existence and scattering for rough solutions to generalized nonlinear Schrodinger equations on $\mathbb{R}$}. Commun. Pure Appl. Anal. \textbf{7} (2008), no. 3, 467--489.

\bibitem{CKSTT}J. Colliander, M. Keel, G. Staffilani, H. Takaoka, T. Tao, \emph{Global well-posedness and scattering for the energy-critical nonlinear Schr\"odinger equation in $\R^3$.} Ann. of Math. (2) {\bf 167} (2008), no.~3, 767--865.

\bibitem{CS} P. Constantin and J.-C. Saut, \emph{Local smoothing properties of dispersive equations.} J. Amer. Math. Soc. 1 (1988), 413--439.

\bibitem{CLZ} Z. Cui, Y. Li, and D. Zhao, \emph{Decay of solutions to one-dimensional inhomogeneous nonlinear Schrödinger equations}. Preprint {\tt arXiv:2412.08272}.

\bibitem{CLZ2} Z. Cui, Y. Li, and D. Zhao, \emph{Well-posedness and scattering of odd solutions for the defocusing INLS in one dimension}. Preprint {\tt arXiv:2509.02158}.

\bibitem{D} B. Dodson, \emph{Global well-posedness and scattering for the defocusing, $L^{2}$-critical nonlinear Schr\"odinger equation when $d\geq3$.} J. Amer. Math. Soc. {\bf 25} (2012), no.~2, 429--463.

\bibitem{D2} B. Dodson, \emph{Global well-posedness and scattering for the defocusing, $L^2$-critical, nonlinear Schr\"odinger equation when $d=2$.} Duke Math. J. {\bf 165} (2016), no.~18, 3435--3516.

\bibitem{D3} B. Dodson, \emph{Global well-posedness and scattering for the defocusing, $L^2$ critical, nonlinear Schr\"odinger equation when $d=1$.} Amer. J. Math. {\bf 138} (2016), no.~2, 531--569.

\bibitem{FXC} D. Fang, J. Xie, and T. Cazenave, \emph{Scattering for the focusing energy-subcritical nonlinear Schrödinger equation.} Sci. China Math. \textbf{54} (2011), no. 10, 2037--2062.

\bibitem{FarahG} L.~G. Farah and C.~M. Guzm\'an, \emph{Scattering for the radial 3D cubic focusing inhomogeneous nonlinear Schr\"odinger equation.} J. Differential Equations {\bf 262} (2017), no. 8, 4175--4231.

\bibitem{GV} J. Ginibre and G. Velo, \emph{Smoothing properties and retarded estimates for some dispersive evolution equations}. Comm. Math. Phys. \textbf{144} (1992), 163–-188.

\bibitem{GM} C.~M. Guzm\'an and J. Murphy, \emph{Scattering for the non-radial energy-critical inhomogeneous NLS.} J. Differential Equations \textbf{295} (2021), 187--210.

\bibitem{HKV} B. Harrop-Griffiths, R. Killip, and M. Vi\c san, \emph{Scattering for the nonlinear Schr\" odinger equation with concentrated nonlinearity.} Preprint {\tt arXiv:2507.14571v1}.

\bibitem{HR} J. Holmer and S. Roudenko, \emph{A sharp condition for scattering of the radial 3D cubic nonlinear Schr\"odinger equation}. Comm. Math. Phys. {\bf 282} (2008), no. 2, 435--467.

\bibitem{KT} M. Keel and T. Tao, \emph{Endpoint Strichartz Estimates}. Amer. J. Math. \textbf{120} (1998), no. 5, 955–980.

\bibitem{KM} C.~E. Kenig and F. Merle, \emph{Global well-posedness, scattering and blow-up for the energy-critical, focusing, non-linear Schr\"odinger equation in the radial case.} Invent. Math. {\bf 166} (2006), no.~3, 645--675.

\bibitem{Keraani} S. Keraani, \emph{On the defect of compactness for the Strichartz estimates for the Schr\"odinger equations.} J. Diff. Eq. 175 (2001), 353--392.

\bibitem{KMVZZ} R. Killip, C. Miao, M. Vi\c san, J. Zhang, and J. Zheng, \emph{The energy-critical NLS with inverse-square potential.} Discrete Contin. Dyn. Syst. \textbf{37} (2017), no. 7, 3831--3866.

\bibitem{KMVZ} R. Killip, J. Murphy, M. Vi\c san, and J. Zheng, \emph{The focusing cubic NLS with inverse-square potential in three space dimensions.} Differential Integral Equations \textbf{30} (2017), no. 3-4, 161–206.

\bibitem{KV} R. Killip and M. Vi\c san, \emph{Nonlinear Schrödinger equations at critical regularity.} Lecture notes prepared for Clay Mathematics Institute Summer School, Z\"urich, Switzerland, 2008.

\bibitem{LS} H. Linblad and A. Soffer, \emph{Scattering and small data completeness for the critical nonlinear Schrödinger equation}. Nonlinearity \textbf{19} (2006), no. 2, 345–353.

\bibitem{LMZ} X. Liu, C. Miao, and J. Zheng, \emph{Global well-posedness and scattering for mass-critical inhomogeneous NLS when $d\geq 3$}. Math. Z. \textbf{311} (2025), no. 3, Paper No. 54.

\bibitem{MMZ} C. Miao, J. Murphy, and J. Zheng, \emph{Scattering for the non-radial inhomogeneous NLS.} Math. Res. Lett. \textbf{28} (2021), no. 5, 1481–1504.

\bibitem{M-expo} J. Murphy, \emph{Subcritical scattering for defocusing nonlinear Schr\"odinger equations}. Available online at https://pages.uoregon.edu/jamu/expository.pdf.

\bibitem{M-INLS} J. Murphy, \emph{A simple proof of scattering for the intercritical inhomogeneous NLS.} Proc. Amer. Math. Soc. \textbf{150} (2022), no. 3, 1177--1186.  

\bibitem{M-half} J. Murphy, \emph{The defocusing $\dot H^{1/2}$-critical NLS in high dimensions}. Discrete Contin. Dyn. Syst. {\bf 34} (2014), no. 2, 733--748.

\bibitem{N} K. Nakanishi, \emph{Energy scattering for nonlinear Klein-Gordon and Schrödinger equations in spatial dimensions 1 and 2.} J. Funct. Anal. \textbf{69} (1999), no. 1, 201–225.

\bibitem{PV} F. Planchon and L. Vega, \emph{Bilinear virial identities and applications.} Ann. Sci. \'Ec. Norm. Sup\'er. (4) \textbf{42} (2009), no. 2, 261--290.

\bibitem{RV} E. Ryckman, M. Visan, \emph{Global well-posedness and scattering for the defocusing energy-critical nonlinear Schr\"odinger equation in $\R^{1+4}$.} Amer. J. Math. {\bf 129} (2007), no.~1, 1--60.

\bibitem{Sjo} P. Sj\"olin, \emph{Regularity of solutions to the Schr\"odinger equation.} Duke Math. J. \textbf{55} (1987), 699--715

\bibitem{S} W. A. Strauss, \emph{Nonlinear scattering theory.} In Scattering Theory in Mathematical Physics, edited by J. A. Lavita and J. P. Marchand. D. Reidel, Dordrecht, Holland/Boston, 1974, pp. 53–178.

\bibitem{S2} R. S. Strichartz, \emph{Restrictions of Fourier transforms to quadratic surfaces and decay of solutions of wave equations}. Duke Math. J. \textbf{44} (1977), no. 3, 705–714.

\bibitem{TV}T. Tao and M. Visan, \emph{Stability of energy-critical nonlinear Schr\"odinger equations in high dimensions}. Electron. J. Differential Equations {\bf 118} (2005), 1-28.

\bibitem{TY}Y. Tsutsumi and K. Yajima, \emph{The Asymptotic Behavior of Nonlinear Schrödinger Equations}. Bull. Amer. Math. Soc. (N.S.) \textbf{11} (1984), no. 1, 186–188.

\bibitem{Vega} L. Vega, \emph{Schr\"odinger equations: pointwise convergence to the initial data.} Proc. Amer. Math. Soc. 102 (1988), 874--878.

\bibitem{V} M. Vi\c san, \emph{Dispersive Equations}. In Dispersive equations and nonlinear waves. Oberwolfach Seminars, 45. Birkh\"auser/Springer, Basel, 2014. xii+312 pp.

\bibitem{V2} M. Visan, \emph{The defocusing energy-critical nonlinear Schr\"odinger equation in higher dimensions.} Duke Math. J. {\bf 138} (2007), no.~2, 281--374.

\bibitem{W} M. Watanabe, \emph{Time-dependent method for non-linear Schrödinger equations in inverse scattering problems.} J. Math. Anal. Appl. \textbf{459} (2018), no. 2, 932–944.

\end{thebibliography}
\end{document}